\newtheorem{thm}{Theorem}[section]
\newtheorem{prp}[thm]{Proposition}
\newtheorem{lem}[thm]{Lemma}
\newtheorem{cor}[thm]{Corollary}
\theoremstyle{definition}
\newtheorem{dfn}{Definition}[section]
\newtheorem{rem}[dfn]{Remark}
\newcommand{\st}{:\;}
\def\R{{\mathbb R}}%
\newcommand{\Red}{\R^d}
\renewcommand{\phi}{\varphi}
\providecommand{\parenth}[1]{\left(#1\right)}%
\providecommand{\braces}[1]{\left\{#1\right\}}%
\newcommand{\iprod}[2]{\left\langle#1,#2\right\rangle}%
\def\polar{\circ}
\newcommand{\polarset}[1]{{#1}^{\polar}}%
\newcommand{\conv}{\mathrm{conv}}%
\newcommand{\pos}[1]{\mathrm{pos}\ \!#1}%
\newcommand{\lpos}[1]{\mathrm{lpos}\ \!#1}%
\newcommand{\enorm}[1]{\left|#1\right|}
\providecommand{\card}[1]{\lvert#1\rvert}%
\providecommand{\parenth}[1]{\left(#1\right)}%
\providecommand{\braces}[1]{\left\{#1\right\}}%
\newcommand{\Lin}[1]{\mathrm{Lin}\ \! {#1}}%
\newcommand{\Href}[2]{\hyperref[#2]{#1~\ref{#2}}}
\title{Colorful positive bases decomposition and Helly-type results for cones}
\author{Grigory Ivanov\address{Grigory Ivanov: 
Pontifícia Universidade Católica do Rio de Janeiro \\
Departamento de Matemática,
Rua Marquês de São Vicente, 225\\
Edif{\'i}cio Cardeal Leme, sala 862,
22451-900 G{\'a}vea, Rio de Janeiro, Brazil}
\email{grimivanov@gmail.com}}
\thanks{The author is supported by Projeto Paz and Coordena\c{c}\~ao de Aperfei\c{c}oamento de Pessoal de N\'ivel Superior -- Brasil (CAPES) -- 23038.015548/2016-06}
\subjclass[2020]{52A35 (primary), 52A30}
\keywords{ Helly-type result, Colorful Helly theorem, Carath\'eodory lemma, positive basis}
\begin{document}

\begin{abstract}
We prove the following colorful Helly-type result: Fix $k \in [d-1]$. Assume $\mathcal{A}_1, \dots, \mathcal{A}_{d+(d-k)+1}$ are finite sets (colors) of nonzero vectors in $\R^d$. If for every rainbow sub-selection $R$ from these sets of size at most $\max \{d+1, 2(d-k+1)\}$, the system $\iprod{a}{x} \leq 0,\; a \in R$ has at least $k$ linearly independent solutions, then at least one of the systems $\iprod{a}{x} \leq 0,\; a \in \mathcal{A}_i,$ $i \in [d+(d-k)+1]$ has at least $k$ linearly independent solutions.

A \emph{rainbow sub-selection} from several sets refers to choosing at most one element from each set (color).

The Helly number $\max \{d+1, 2(d-k+1)\}$ and the number of colors $d+(d-k)+1$ are optimal.

Our key observation is a certain colorful Carath\'eodory-type result for positive bases.
\end{abstract}

\maketitle

\section{Introduction}

The celebrated Helly's theorem \cite{helly1923mengen} posits that within a finite family of convex sets in $\R^d$, if the intersection of any subfamily, consisting of at most $d+1$ sets, shares a common point, then all the sets in the family share at least one common point. This gem of combinatorial convexity has seen many different extensions and generalizations (see \cite{barany2022helly} for one of the most up-to-date surveys). A now-classical result by Lov\'asz (see \cite{barany1982generalization}) is the following ``colorful'' version of Helly's theorem:

\begin{prp}[Colorful Helly theorem]\label{prp:colorful_Helly}
    Let $\mathcal{F}_1, \dots, \mathcal{F}_{d+1}$ be finite families of convex sets in $\R^d.$ Suppose that for any choice $F_1 \in \mathcal{F}_1, \dots, F_{d+1} \in \mathcal{F}_{d+1},$ the intersection $\bigcap\limits_{i=1}^{d+1} F_i$ is non-empty. Then for some $i \in \braces{1, \dots, d+1},$ the intersection of all sets in the family $\mathcal{F}_i$ is non-empty.
\end{prp}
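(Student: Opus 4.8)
The plan is to recover this classical result of Lov\'asz from the colorful Carath\'eodory theorem of B\'ar\'any~\cite{barany1982generalization} by means of linear programming duality, after first reducing from arbitrary convex sets to closed halfspaces.

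\emph{Reduction to halfspaces.} Since the families are finite, there are only finitely many rainbow tuples $(F_1,\dots,F_{d+1})$ with $F_i\in\mathcal{F}_i$; for each of them fix a point in $\bigcap_i F_i$, and let $P\subset\R^d$ be the finite set of all these points. Replace each $F\in\bigcup_i\mathcal{F}_i$ by the polytope $F':=\conv(F\cap P)\subseteq F$. Every rainbow tuple of the new families still has a common point -- the point originally chosen for the corresponding old tuple lies in every $F_i\cap P\subseteq F_i'$ -- and any common point of all the sets $F'$ with $F\in\mathcal{F}_i$ is a common point of $\mathcal{F}_i$. So we may assume (renaming) that every member of every $\mathcal{F}_i$ is a polytope. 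Express each such polytope as a finite intersection of closed halfspaces, and let $\mathcal{G}_i$ be the finite family of all halfspaces arising this way from the polytopes of $\mathcal{F}_i$. A rainbow tuple $H_1,\dots,H_{d+1}$ of the $\mathcal{G}_i$ consists of halfspaces with $H_i$ one of the defining halfspaces of some $Q_i\in\mathcal{F}_i$; the $Q_i$ form a rainbow tuple, hence share a point, which lies in $\bigcap_i H_i$. Moreover $\bigcap_{H\in\mathcal{G}_i}H=\bigcap_{Q\in\mathcal{F}_i}Q$. Hence it suffices to prove the statement when every set is a closed halfspace.

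\emph{Dualization and colorful Carath\'eodory.} Suppose, for contradiction, that in the halfspace case every $\mathcal{G}_i$ has empty intersection. Write the members of $\mathcal{G}_i$ as $\{x\st\iprod{a}{x}\leq b\}$ and let $Y_i\subset\R^{d+1}$ be the set of pairs $(a,b)$ so obtained. By Farkas' lemma, a finite system $\iprod{a_j}{x}\leq b_j$ is infeasible precisely when some $\lambda_j\geq 0$, not all zero, satisfy $\sum_j\lambda_j a_j=0$ and $\sum_j\lambda_j b_j<0$, i.e.\ precisely when $-e_{d+1}=(0,\dots,0,-1)$ lies in $\mathrm{cone}(Y_i)$. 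So the assumption says exactly that $-e_{d+1}\in\mathrm{cone}(Y_i)$ for every $i\in[d+1]$, with $Y_1,\dots,Y_{d+1}\subset\R^{d+1}$. Now apply the colorful Carath\'eodory theorem in its conical form in $\R^{d+1}$ -- where the number of colours matches the dimension -- to obtain a rainbow sub-selection $(a_i,b_i)\in Y_i$ with $-e_{d+1}$ already in the conical hull of the chosen pairs. Undoing the translation, the corresponding halfspaces $H_i\in\mathcal{G}_i$ already have empty intersection; filling out the remaining colours with arbitrary members of the respective $\mathcal{G}_i$ produces a rainbow tuple of $\mathcal{G}_1,\dots,\mathcal{G}_{d+1}$ with no common point, contradicting the hypothesis. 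Therefore some $\mathcal{G}_i$, and so some $\mathcal{F}_i$, has a common point.

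I expect the only genuinely delicate step to be the reduction to halfspaces: arbitrary closed convex sets cannot simply be enlarged to halfspaces while keeping an empty intersection empty -- two disjoint unbounded convex regions already witness this -- so the finiteness of the families must be exploited first, through the passage to polytopes. Everything afterwards is bookkeeping around Farkas duality and the colorful Carath\'eodory theorem; its conical form, the only version needed here, follows from a routine nearest-point descent over rainbow selections, and is in any case a baby case of the sharper colorful Carath\'eodory-type statements the rest of the paper develops.
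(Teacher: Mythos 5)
The paper does not prove this proposition at all: it is quoted as Lov\'asz's classical result (via \cite{barany1982generalization}) and used as a black box, so there is no in-paper argument to compare against. Your derivation is correct and self-contained. The reduction to halfspaces is handled properly: the passage through $\conv(F\cap P)$ exploits finiteness exactly where it must (your own remark about disjoint unbounded regions is the right caution), every polytope $F'$ is nonempty because each $F$ occurs in some full rainbow tuple (with the trivial degenerate case of an empty family $\mathcal{F}_i$ disposed of separately), and $\bigcap_{H\in\mathcal{G}_i}H=\bigcap_{Q\in\mathcal{F}_i}Q$ holds since $\mathcal{G}_i$ collects all defining halfspaces. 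The Farkas translation ``$\mathcal{G}_i$ infeasible $\iff -e_{d+1}\in\pos Y_i$'' is the standard Gale--Farkas alternative for finite systems, and the decisive point -- that you must invoke the \emph{conical} colorful Carath\'eodory theorem in $\R^{d+1}$, where the number of colors equals the dimension, rather than the convex version (which would demand $d+2$ colors you do not have) -- is exactly right; that conical version is in B\'ar\'any's paper and, as you say, follows from the nearest-point descent over rainbow selections (the nearest point $q$ to the target in the current rainbow cone lies on its boundary, hence uses at most $d$ of the $d+1$ generators, freeing one color to improve). This is essentially the classical route from colorful Carath\'eodory to colorful Helly by LP duality, independent of (and much lighter than) the positive-basis machinery developed in the paper, which only becomes necessary for the lineality-space statements such as \Href{Theorem}{thm:colorful_Caratheodory_for_lineality_subspace}.
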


Clearly, Lov\'asz's result implies the original Helly's theorem by taking $\mathcal{F}_1 = \dots = \mathcal{F}_{d+1} = \mathcal{F}$. 

Recently, B\'ar\'any \cite[Corollary 4.1]{barany2024positive} obtained the following Helly-type result, generalizing the result of Katchalski \cite{katchalski1978helly}:

\begin{prp}\label{prp:helly_hom_systems}
Assume $A$ is a finite set of nonzero vectors in $\R^d.$ The homogeneous system of linear inequalities $\iprod{a}{x} \leq 0, $ $a \in A$ has at least $k$ linearly independent solutions if and only if for every $B \subset A$ whose size is at most $\max \{d+1, 2(d-k+1)\},$ the system $\iprod{b}{x} \leq 0,$ $b \in B$ has at least $k$ linearly independent solutions.
\end{prp}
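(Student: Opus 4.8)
The plan is to recast the statement in the language of convex cones and then reduce its nontrivial direction to a Carath\'eodory-type bound for positive bases, which I would establish by an inclusion-minimality argument combined with the positive-basis decomposition. For a finite set $A\subset\R^d$ of nonzero vectors, the solution set $S_A=\braces{x\in\R^d\st\iprod{a}{x}\le 0\text{ for all }a\in A}$ is exactly the polar cone of $\pos A$, and a short computation with polar cones gives $\dim S_A=d-\dim\Lambda_A$, where $\Lambda_A=\pos A\cap(-\pos A)$ is the lineality space of $\pos A$ and $\dim S_A$ denotes the dimension of the cone (equivalently, the maximal number of linearly independent solutions). Hence ``$S_A$ has at least $k$ linearly independent solutions'' is equivalent to ``$\pos A$ contains no linear subspace of dimension $m:=d-k+1$''. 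With this dictionary one implication is immediate: if $B\subseteq A$ then $S_A\subseteq S_B$, so $\dim S_B\ge\dim S_A$, and having at least $k$ linearly independent solutions is preserved when inequalities are deleted. It remains to prove the contrapositive of the other implication, i.e. the following Carath\'eodory-type claim: \emph{if $\pos A$ contains an $m$-dimensional subspace, then so does $\pos B$ for some $B\subseteq A$ with $\card B\le\max\braces{d+1,2m}$.}

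To prove the claim I would choose $B\subseteq A$ inclusion-minimal with the property $P(B)$ that $\pos B$ contains an $m$-dimensional subspace, equivalently that $\dim\Lambda_B\ge m$; such a $B$ exists because $P(A)$ holds. The first and, I expect, hardest step is to show that a minimal such $B$ is a minimal positive basis of its own span. Indeed, the set $B_0=\braces{b\in B\st-b\in\pos B}$ positively spans $\Lambda_B$ --- the standard description of the lineality space of a finitely generated cone, obtained by a short cancellation argument --- so $P(B_0)$ holds, and minimality forces $B=B_0$; thus $\pos B$ equals the subspace $\Lambda:=\Lambda_B$ of some dimension $\ell\ge m$, and no proper subset of $B$ positively spans $\Lambda$, for such a subset would again satisfy $P$.

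Now I would invoke the positive-basis decomposition inside $\Lambda\cong\R^\ell$: the minimal positive basis $B$ of $\Lambda$ partitions as $B=B_1\sqcup\dots\sqcup B_r$, where each $B_i$ is an indecomposable positive basis of a subspace $\Lambda_i$, so that $\card{B_i}=\ell_i+1$ with $\ell_i:=\dim\Lambda_i$, and $\Lambda=\Lambda_1\oplus\dots\oplus\Lambda_r$ is a direct sum; in particular $\card B=\ell+r$ with $\ell=\sum_i\ell_i$. If $r=1$ then $\card B=\ell+1\le d+1$. If $r\ge 2$, then for each $i$ the proper subset $\bigcup_{j\ne i}B_j$ positively spans the subspace $\bigoplus_{j\ne i}\Lambda_j$, which has dimension $\sum_{j\ne i}\ell_j$, so minimality of $B$ forces $\sum_{j\ne i}\ell_j\le m-1$; summing over $i$ gives $(r-1)\ell\le r(m-1)$, and since $\sum_{j\ne i}\ell_j\ge r-1$ we also get $r\le m$. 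Consequently $\card B=\ell+r\le (m-1)+\frac{m-1}{r-1}+r$, which is convex in $r$ on $[2,m]$ and equals $2m$ at both endpoints $r=2$ and $r=m$, hence is $\le 2m$ throughout. In every case $\card B\le\max\braces{d+1,2m}$, exactly the required bound.

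The main obstacle is the middle step: recognising that an inclusion-minimal witness for $P$ must be a minimal positive basis of its linear span. This is what makes the positive-basis decomposition applicable and collapses the whole problem into the elementary counting estimate above; by contrast a naive route --- take an $(m+1)$-element positive basis of the subspace and apply the conic Carath\'eodory lemma to each of its vectors --- only yields a bound of order $md$. Further care is needed to make the conic-duality dictionary precise (dimension of the solution space versus dimension of the lineality space of $\pos A$), to verify that $B_0$ positively spans the lineality space, and to use that the subspaces $\Lambda_i$ sit in a \emph{direct} sum, since this is exactly what makes $\dim\bigoplus_{j\ne i}\Lambda_j=\sum_{j\ne i}\ell_j$ and hence the counting valid.
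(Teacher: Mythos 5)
Your framework is sound up to the decomposition step: the duality dictionary (the solution set is the polar of $\pos A$, of dimension $d-\dim\lpos A$, exactly as in \Href{Lemma}{lem:sol_set_via_polar_of_cone}), the trivial direction, and the argument that an inclusion-minimal witness $B$ (via the cancellation argument for $B_0$ and \Href{Proposition}{prp:basis_of_lineality_subspace}-type reasoning) satisfies $\pos B=\lpos B$ and is a positive basis of that subspace are all correct, and this is indeed how the paper and B\'ar\'any set things up. The genuine gap is the structure theorem you then invoke: it is \emph{not} true that a positive basis $B$ of an $\ell$-dimensional space splits as $B=B_1\sqcup\dots\sqcup B_r$ with each $B_i$ a minimal positive basis of a subspace $\Lambda_i$ of dimension $\card{B_i}-1$ and $\Lambda=\Lambda_1\oplus\dots\oplus\Lambda_r$. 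Already in $\R^3$ the set $B=\{e_1,\;e_2,\;-e_1-e_2,\;e_1+e_3,\;e_2-e_3\}$ is a positive basis of $\R^3$ (the coefficients $(1,1,2,1,1)$ give a strictly positive vanishing combination, and a direct check shows no four of the vectors positively span $\R^3$), yet it contains no antiparallel pair, so no two-element part can be a minimal positive basis of a line and the only admissible splitting $3+2$ is impossible. Note also that this $B$ is inclusion-minimal for the property $\dim\lpos B\ge 3$, so the minimality of your witness does not rescue the claim.

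What is true, and what the paper (following Reay and B\'ar\'any) actually uses, is the graded decomposition — the monochromatic case of \Href{Theorem}{thm:coloful_Reay_decomposition}: $B=B_1\cup\dots\cup B_r$ with $\card{B_1}\ge\dots\ge\card{B_r}\ge 2$ such that each nested union $B_1\cup\dots\cup B_i$ is a positive basis of the subspace $\pos{\braces{B_1\cup\dots\cup B_i}}$, whose dimension is $\sum_{j\le i}\card{B_j}-i$. Here only $B_1$, and the \emph{projections} of the later $B_i$ modulo the previously spanned subspace, are minimal positive bases, and the individual spans need not form a direct sum (in the example above, $B_2=\{e_1+e_3,\,e_2-e_3\}$ only projects to the antiparallel pair $\{e_3,-e_3\}$). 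Your counting survives this replacement almost verbatim and then coincides with the paper's argument: inclusion-minimality of $B$ gives $\sum_{j\le i}\card{B_j}-i\le m-1$ for every $i<r$, since $\card{B_j}\ge 2$ this forces $r\le m$, monotonicity of the sizes gives $\card{B_r}\le\frac{m+r-2}{r-1}$, and the convexity estimate you wrote (this is \Href{Lemma}{lem:h(k,d)_arithmetic}) yields $\card{B}\le\frac{(m-1)r}{r-1}+r\le\max\{d+1,2m\}$, with $r=1$ giving $\card{B}=\ell+1\le d+1$. So the proof is repairable, but as written it rests on a false decomposition lemma rather than on Reay's theorem.
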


B\'ar\'any also showed that the constant $m(k,d)= \max \{d+1, 2(d-k+1)\}$ is optimal in the following sense. There are examples of finite subsets of $\R^d \setminus \braces{0}$ such that the solution set of any subsystem corresponding to a subset of at most $m(k,d) - 1$ points contains $k$ linearly independent solutions, but the solution set of the original system does not.

Suppose one has a finite collection of sets (colors). A \emph{rainbow sub-selection} from these sets refers to choosing at most one element from each set (color). We note that for convenience, we do not require a rainbow sub-selection to intersect each of the original sets. For the sake of convenience, we will use $[n]$ to denote the set $\{1, \dots, n\}$ for a natural $n.$

Our foremost result is a ``colorful'' extension of B\'ar\'any's result:

\begin{thm}\label{thm:colorful_linear_system_to_big} Fix $k \in [d-1]$. Assume $\mathcal{A}_1, \dots, \mathcal{A}_{d+(d-k)+1}$ are finite sets of nonzero vectors in $\R^d.$ If for every rainbow sub-selection $R$ from these sets of size at most $\max \{d+1, 2(d-k+1)\}$, the system $\iprod{a}{x} \leq 0,\; a \in R$ has at least $k$ linearly independent solutions, then at least one of the systems $\iprod{a}{x} \leq 0,\; a \in \mathcal{A}_i,$ $i \in [d+(d-k)+1]$ has at least $k$ linearly independent solutions.
\end{thm}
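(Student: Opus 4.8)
The plan is to recast the statement in terms of convex cones, prove a ``colorful Carathéodory-type result for positive bases'' by an extremal argument that reduces to the classical Colorful Carathéodory theorem, and then sharpen the size of the rainbow sub-selection via Proposition~\ref{prp:helly_hom_systems}. \emph{Dictionary.} For a finite $S\subseteq\Red\setminus\{0\}$ the solution set of $\iprod{a}{x}\le 0$, $a\in S$, is the polar cone $(\pos{S})^{\polar}$, of dimension $d-\dim N(S)$, where $N(S)$ is the lineality space of $\pos{S}$ (its largest linear subspace). Hence ``the system on $S$ has at least $k$ linearly independent solutions'' is the same as ``$\pos{S}$ contains no $\ell$-dimensional linear subspace'', where $\ell:=d-k+1$; note $\ell\ge 2$ and that the number of colors equals $d+(d-k)+1=d+\ell$. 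Contrapositively, it suffices to show: if every $\pos{\mathcal{A}_i}$ contains an $\ell$-dimensional subspace, then some rainbow sub-selection $R$ with $|R|\le m:=\max\{d+1,2\ell\}$ has $\pos{R}$ containing an $\ell$-dimensional subspace.

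\emph{The colorful positive-bases lemma.} First I would drop the bound on $|R|$ and produce \emph{some} rainbow $R$ with $\pos{R}$ containing an $\ell$-dimensional subspace. Let $D:=\max_R\dim N(R)$ over all rainbow sub-selections $R$, and suppose for contradiction $D\le\ell-1$. Since the lineality space of a finitely generated cone is positively spanned by the generators lying in it, a maximizer can be replaced by a positive basis of its own lineality space $N$, so that $\pos{R}=N$, $\dim N=D$, and $|R|\le 2D$ (a positive basis of a $D$-dimensional space has at most $2D$ elements). Then $R$ uses at most $2D$ colors, leaving at least $(d+\ell)-2D\ge(d-D)+1$ colors unused. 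In the quotient $W:=\Red/N\cong\R^{d-D}$, each unused color $\mathcal{A}_i$ has $\pos{\overline{\mathcal{A}_i}}$ non-pointed: the lineality space of $\pos{\mathcal{A}_i}$, of dimension $\ge\ell$, maps onto a subspace of dimension $\ge\ell-D\ge 1$ contained in $\pos{\overline{\mathcal{A}_i}}$ (discarding a zero image does not change the positive hull). Applying the classical Colorful Carathéodory theorem \cite{barany1982generalization} in $W$ to $(d-D)+1$ of these unused colors (with zero images removed) yields a rainbow $T$ with $\pos{\overline{T}}$ non-pointed, i.e.\ containing a line $\lambda\subseteq W$. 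Since $R\cup T$ is rainbow and $\pos{(R\cup T)}$ is a convex cone containing $N$ whose image in $W$ contains $\lambda$, it contains the full preimage of $\lambda$, a subspace of dimension $D+1$; hence $\dim N(R\cup T)\ge D+1$, contradicting maximality of $D$. Therefore $D\ge\ell$.

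\emph{Trimming.} To the rainbow $R$ just produced I would apply Proposition~\ref{prp:helly_hom_systems} with $R$ in the role of $A$: since the system on $R$ has fewer than $k$ linearly independent solutions, some subsystem on $B\subseteq R$ with $|B|\le m$ has the same property, and $B$ is again rainbow. (The value $m$ is forced by taking $B$ minimal with $0\in\mathrm{relint}\,\conv(B)$ and $\dim\mathrm{span}\,B\ge\ell$: if that span has dimension exactly $\ell$, Steinitz's theorem gives $|B|\le 2\ell$; if it has dimension $j>\ell$, minimality makes the positive dependency on $B$ unique up to scaling, so $B$ is a positive circuit and $|B|=j+1\le d+1$.) This establishes the contrapositive, hence the theorem.

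\emph{Expected main obstacle.} The substantive step is the colorful positive-bases lemma: identifying the right monovariant (the dimension of the lineality space of $\pos{R}$) and the right reduction (the quotient by that lineality space, which turns the positive-basis problem into ordinary Colorful Carathéodory). The delicate part is the counting — one must realize a maximizer as a positive basis of its lineality space so that it blocks only $\le 2D$ colors, leaving exactly the $(d-D)+1$ colors Colorful Carathéodory needs in $\R^{d-D}$; this is precisely where the optimal number $d+\ell$ of colors is consumed, and it is genuinely tight (for $k=1$ there is no slack at all). One also has to verify that the two regimes of $m=\max\{d+1,2\ell\}$ are handled uniformly in the trimming, and to build the extremal configurations — iterated positive circuits $\{e_1,\dots,e_j,-\sum e_i\}$ and iterated sets $\{\pm e_1,\dots,\pm e_\ell\}$ distributed among the colors — witnessing optimality of both constants.
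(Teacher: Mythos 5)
Your proposal is correct, but it takes a genuinely different route from the paper. You argue by contraposition: a maximality argument (take a rainbow sub-selection maximizing $\dim\lpos R$, replace it by a positive basis of its lineality space so that it occupies at most $2D$ colors — using the classical Davis bound on the size of a positive basis — and apply the classical Colorful Carath\'eodory theorem in the quotient by that lineality space to $(d-D)+1$ untouched colors) produces a rainbow witness of unrestricted size whose positive hull contains an $\ell$-dimensional subspace, $\ell=d-k+1$; you then trim it to size at most $m(k,d)$ by applying the nontrivial direction of B\'ar\'any's monochromatic result (\Href{Proposition}{prp:helly_hom_systems}) to the witness, noting that any subset of a rainbow sub-selection is again rainbow. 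Your counting $2D+(d-D)+1\le d+\ell$ is exactly tight, as you note. The paper instead proves the stronger Colorful Reay decomposition (\Href{Theorem}{thm:coloful_Reay_decomposition}), whose monotonically decreasing part sizes, combined with \Href{Lemma}{lem:h(k,d)_arithmetic}, allow the size-restricted hypothesis to be applied directly to each prefix of the decomposition (yielding \Href{Theorem}{thm:colorful_Caratheodory_for_lineality_subspace}), and then dualizes via \Href{Lemma}{lem:sol_set_via_polar_of_cone}; your first step is essentially the ``weaker version of \Href{Theorem}{thm:coloful_Reay_decomposition}'' that the paper itself says follows from Colorful Carath\'eodory. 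What your route buys is brevity and the avoidance of the decomposition machinery; what it costs is that the sharp Helly number $m(k,d)$ is outsourced to \Href{Proposition}{prp:helly_hom_systems} as a black box (whose own proof rests on Reay's monochromatic decomposition, so the machinery is hidden rather than eliminated), and that you do not obtain \Href{Theorem}{thm:coloful_Reay_decomposition} or \Href{Theorem}{thm:colorful_Caratheodory_for_lineality_subspace}, which the paper presents as results of independent interest. One small caveat: your parenthetical Steinitz/positive-circuit sketch of why $m$ suffices for the trimming is not by itself a complete argument (handling the general lineality space is precisely what Reay-type decompositions are for), but it is dispensable since \Href{Proposition}{prp:helly_hom_systems} is a stated known result and your main argument only uses it.
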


Clearly, this theorem implies \Href{Proposition}{prp:helly_hom_systems}, and the constant $m(k,d)$ is optimal since it is optimal for its ``monochromatic'' version - \Href{Proposition}{prp:helly_hom_systems}. Surprisingly, the number of color classes $d+(d-k)+1$ is optimal as well. We find this especially intriguing since usually the Helly numbers for ``colorful'' and ``monochromatic'' Helly-type results are the same. Note that for some quantitative colorful Helly-type results \cite{damasdi2021colorful}, the precise Helly number remains unknown.

Notably, when $d=k$, both \Href{Proposition}{prp:helly_hom_systems} and \Href{Theorem}{thm:colorful_linear_system_to_big} can be derived from the Colorful Helly theorem (\Href{Proposition}{prp:colorful_Helly}) with $m(d,d) = d+1.$ This observation is proposed as an exercise for the reader.

Our proof of \Href{Theorem}{thm:colorful_linear_system_to_big} mostly follows B\'ar\'any's proof of \Href{Proposition}{prp:helly_hom_systems} with a critical distinction lying in the application of what we term the ``Colorful Reay's decomposition.''

Let us recall another gem of combinatorial convexity, namely the ``Colorful Carath\'eodory theorem'' obtained by B\'ar\'any in \cite{barany1982generalization}:

\begin{prp}[Colorful Carath\'eodory theorem]
\label{prp:colorful_Caratheodory}
Assume $S_1, \dots, S_{d+1}$ are subsets of $\R^d.$ If a point $p$ belongs to the convex hull of each of the sets $S_1, \dots, S_{d+1},$ then there are points $s_1 \in S_1, \dots, s_{d+1} \in S_{d+1}$ such that $p$ belongs to the convex hull of $\braces{s_1, \dots, s_{d+1}}.$
\end{prp}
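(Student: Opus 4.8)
The plan is to reproduce Bárány's original optimization argument. First I would normalize by a translation so that $p = 0$; the hypothesis then reads $0 \in \conv S_i$ for every $i \in [d+1]$, and the goal becomes to find a rainbow selection $s_1 \in S_1, \dots, s_{d+1} \in S_{d+1}$ with $0 \in \conv\braces{s_1, \dots, s_{d+1}}$. By the ordinary Carathéodory theorem $0$ already lies in the convex hull of some finite subset of each $S_i$, so I may assume every $S_i$ is finite; consequently there are only finitely many rainbow selections, hence only finitely many rainbow simplices $\conv\braces{s_1, \dots, s_{d+1}}$, and among them a closest-to-the-origin one exists.

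Arguing by contradiction, suppose no rainbow simplex contains the origin. Among all rainbow selections choose one, say $s_1, \dots, s_{d+1}$, whose simplex $T = \conv\braces{s_1, \dots, s_{d+1}}$ is closest to $0$, and let $y$ be the point of $T$ nearest to $0$. Since $0 \notin T$ we have $y \neq 0$; since $y$ is the metric projection of an exterior point onto the convex set $T$, it must lie on the boundary of $T$, hence in a proper face of the simplex, and such a face has at most $d$ vertices. Therefore there is an index $j$ with $s_j$ not among those vertices, so that $y \in \conv\braces{s_i \st i \neq j}$. The variational inequality for the metric projection gives $\iprod{y}{t} \geq \enorm{y}^2 > 0$ for every $t \in T$.

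Now I would invoke the hypothesis on color $j$. Since $0 \in \conv S_j$, writing $0$ as a convex combination of points of $S_j$ and pairing with $y$ produces a point $s_j' \in S_j$ with $\iprod{y}{s_j'} \leq 0 < \enorm{y}^2$. Replacing $s_j$ by $s_j'$ yields a new rainbow simplex $T'$ that contains both $y$ (because $y \in \conv\braces{s_i \st i \neq j} \subseteq T'$) and $s_j'$, hence the whole segment $u(\tau) = (1-\tau)y + \tau s_j'$, $\tau \in [0,1]$. The derivative of $\enorm{u(\tau)}^2$ at $\tau = 0$ equals $2\iprod{s_j' - y}{y} = 2\parenth{\iprod{s_j'}{y} - \enorm{y}^2} < 0$, so $\enorm{u(\tau)} < \enorm{y}$ for small $\tau > 0$; thus $T'$ is strictly closer to $0$ than $T$, contradicting the minimality in the choice of $T$. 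Hence some rainbow simplex contains $0$, which is the assertion.

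The one genuinely delicate point is the guarantee that the nearest point $y$ omits at least one color: this is exactly where the count ``$T$ has $d+1$ vertices, so a proper face has at most $d$ of them'' is used, and it is the reason $d+1$ colors suffice. Everything else is the standard projection inequality together with a first-order perturbation estimate; the only thing to watch is that the \emph{strict} inequality $\iprod{y}{s_j'} < \enorm{y}^2$ is available (it is, since $\enorm{y} > 0$), so that the perturbation genuinely decreases the distance to the origin.
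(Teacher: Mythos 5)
The paper does not prove Proposition~\ref{prp:colorful_Caratheodory} at all: it is quoted as a known result of B\'ar\'any with a citation, so there is no in-paper argument to compare against. What you wrote is precisely B\'ar\'any's original distance-minimization proof, and almost all of it is correct and complete: the reduction to finite $S_i$ via the ordinary Carath\'eodory theorem, the choice of a rainbow simplex $T$ minimizing the distance to the origin, the variational inequality $\iprod{y}{t}\geq\enorm{y}^2>0$ for all $t\in T$, the existence of $s_j'\in S_j$ with $\iprod{y}{s_j'}\leq 0$ (from $0\in\conv S_j$), and the first-order estimate along the segment from $y$ to $s_j'$ showing the new rainbow simplex is strictly closer to the origin.

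The one step that is not fully justified is exactly the one you flagged: that the nearest point $y$ omits some color. Your justification is ``$y$ is the projection of an exterior point, hence lies on the boundary of $T$, hence in a proper face with at most $d$ vertices.'' This is fine when $T$ is a nondegenerate $d$-simplex, but a rainbow simplex may be degenerate: if the chosen points $s_1,\dots,s_{d+1}$ are affinely dependent, $T$ is lower-dimensional, all of $T$ is its own topological boundary in $\R^d$, and $y$ can lie in the relative interior of $T$, which is contained in no proper face (already for $d=2$ with three collinear points and $y$ in the middle of the resulting segment). So ``boundary implies proper face'' fails as stated. The repair is standard. The variational inequality says the linear functional $x\mapsto\iprod{y}{x}$ attains its minimum $\enorm{y}^2$ over $T$ at $y$; writing $y$ as a convex combination of the $s_i$ and using $\iprod{y}{s_i}\geq\enorm{y}^2$ for all $i$, every $s_i$ carrying positive weight must satisfy $\iprod{y}{s_i}=\enorm{y}^2$. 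Hence $y\in\conv\braces{s_i \st \iprod{y}{s_i}=\enorm{y}^2}$, a set lying in the affine hyperplane $H=\braces{x \st \iprod{y}{x}=\enorm{y}^2}$ of dimension $d-1$ (here $y\neq 0$ is used), and Carath\'eodory's theorem inside $H$ expresses $y$ as a convex combination of at most $d$ of the $s_i$, so some color $j$ is indeed omitted. (Alternatively: if $y$ required positive weights on $d+1$ affinely independent points, $y$ would be interior to $T$, and moving slightly from $y$ toward the origin would stay in $T$ while decreasing the norm, contradicting the choice of $y$.) With this patch your argument is a complete and correct proof of the proposition.
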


We introduce a key observation regarding positive bases: a ``colorful'' extension of Reay's theorem \cite[Theorem 2.6]{reay1965generalizations}. This can be seen as a Colorful Carath\'eodory theorem for positive bases.

First, we recall some notions. Let $S$ be a subset of a linear space. Its positive hull, denoted $\pos S,$ is the set of all finite linear combinations of elements of $S$ with non-negative coefficients. In other words, $\pos S = \bigcup\limits_{\lambda \geq 0} \lambda \, \conv S,$ where $\conv S$ stands for the convex hull of $S.$ A set $B$ forms a \emph{positive basis} of a finite-dimensional linear space $L$ if $L = \pos B$ and removing any element from $B$ changes the positive hull, meaning $L = \pos B$, yet $L$ differs from $\pos\! \parenth{B \setminus \{b\}}$ for every $b$ in $B$.

\begin{thm}[Colorful Reay's decomposition]
\label{thm:coloful_Reay_decomposition}
Fix $k \in [d].$ Let $d+k$ sets $S_1, \dots, S_{d+k} \subset \R^d$ be such that for every $i \in [d+k],$ $\pos S_i$ contains a $k$-dimensional linear subspace. Then there is a rainbow sub-selection $\mathcal{R}$ from the sets $S_1, \dots, S_{d+k}$ and its partition $\mathcal{R} = R_1 \cup \dots \cup R_m$ such that
\begin{enumerate}
\item $\card{R_i} \geq \card{R_{i+1}} \geq 2$ for every $i \in [m-1]$.
\item For every $i \in [m],$ $\pos\! \braces{R_1 \cup \cdots \cup R_i}$ is a linear subspace of $\R^d$ of dimension $\sum\limits_{j \in [i]}\card{R_j}- i,$ and $\bigcup\limits_{j \in [i]} R_j$ is its positive basis.
\item $\pos\mathcal{R} = \pos\!\! \braces{R_1 \cup \cdots \cup R_m}$ contains a $k$-dimensional linear subspace.
\end{enumerate}
\end{thm}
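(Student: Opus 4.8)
\medskip
\noindent\emph{Proof plan.}
The plan is to prove \Href{Theorem}{thm:coloful_Reay_decomposition} by induction on $k$, with the Colorful Carath\'eodory theorem (\Href{Proposition}{prp:colorful_Caratheodory}, used with $p=0$) as the engine and the monochromatic Reay decomposition (Reay's theorem \cite[Theorem~2.6]{reay1965generalizations}) as the tool that supplies the flag decomposition. The elementary but useful remark is that every subset of a rainbow sub-selection is again a rainbow sub-selection; so, at each stage, it is enough to produce \emph{some} rainbow sub-selection $Z$ meeting at most $d+k$ colors with $\pos Z$ containing a $k$-dimensional linear subspace, and then to apply the monochromatic form of the statement to the single set $Z$, obtaining $\mathcal{R}\subseteq Z$ and a partition $\mathcal{R}=R_1\cup\dots\cup R_m$ satisfying (1)--(3), with $\mathcal{R}$ rainbow and meeting at most $d+k$ colors. (Should the cited theorem be phrased only for positive bases of the whole space, one first records the variant ``$\pos Z$ contains a $k$-dimensional subspace $\Rightarrow$ some subset of $Z$ carries a partition as in (1)--(3)'' via the usual extremal argument on minimal positive circuits; this is routine.) For $k=1$ such a $Z$ is immediate: the hypothesis that $\pos S_i$ contains a line forces $0\in\conv\!\parenth{S_i\setminus\braces{0}}$ for each $i\in[d+1]$, so \Href{Proposition}{prp:colorful_Caratheodory} gives nonzero $s_i\in S_i$ with $0\in\conv\braces{s_1,\dots,s_{d+1}}$, and then $\pos\braces{s_1,\dots,s_{d+1}}$ contains a line.

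For the inductive step ($k\ge 2$), apply the theorem with parameter $k-1$ — which needs $d+(k-1)$ colors — to $S_1,\dots,S_{d+k-1}$, producing a rainbow $\mathcal{R}'=R_1\cup\dots\cup R_{m'}$ satisfying (1)--(3) for $k-1$. By (2) the set $L':=\pos\mathcal{R}'$ is a linear subspace and, combining (2) with (3), $\dim L'\ge k-1$. If $\dim L'\ge k$ we are already done with $\mathcal{R}:=\mathcal{R}'$ (properties (1),(2) do not involve $k$, and (3) for $k$ holds since $L'$ is a linear subspace of dimension at least $k$). So assume $\dim L'=k-1$. Here the flag structure pays off: $\dim L'=\card{\mathcal{R}'}-m'$ by (2), while $\card{\mathcal{R}'}\ge 2m'$ since every block has size at least $2$; hence $m'\le k-1$ and $\card{\mathcal{R}'}\le 2(k-1)$. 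Thus $\mathcal{R}'$ touches at most $2(k-1)$ of the $d+k$ colors, so at least $(d+k)-2(k-1)=d-k+2$ colors remain untouched — exactly the count the next step needs.

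Set $d':=\dim\!\parenth{\R^d/L'}=d-k+1$ and write $\bar x$ for the image of $x$. For an untouched color $S_j$, the hypothesis that $\pos S_j$ contains a $k$-dimensional subspace together with $\dim L'=k-1$ shows that $\pos\bar S_j$ contains a line, hence $0\in\conv\!\parenth{\bar S_j\setminus\braces{0}}$; since at least $d'+1$ colors are untouched, \Href{Proposition}{prp:colorful_Caratheodory} in $\R^d/L'\cong\R^{d'}$, applied to the nonzero parts of $d'+1$ of them, yields one vector $s_t$ per chosen color ($t\in[d'+1]$) with $0\in\conv\braces{\bar s_1,\dots,\bar s_{d'+1}}$ and each $\bar s_t\ne 0$. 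Then $Z:=\mathcal{R}'\cup\braces{s_1,\dots,s_{d'+1}}$ is a rainbow sub-selection touching at most $2(k-1)+(d'+1)=d+k$ colors. Because $\mathcal{R}'\subseteq L'$, we have $\pos\bar{Z}=\pos\braces{\bar s_1,\dots,\bar s_{d'+1}}$, which contains the line through some $\bar s_t\ne 0$; lifting this relation and using $L'\subseteq\pos Z$ gives $s_t\in\pos Z$ and $-s_t\in\pos Z$, so the linear subspace $L'+\R s_t$ — of dimension $(k-1)+1=k$, as $s_t\notin L'$ — lies in $\pos Z\cap(-\pos Z)$. Hence $\pos Z$ contains a $k$-dimensional subspace, and applying the monochromatic Reay decomposition to $Z$ completes the induction.

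The crux is the color arithmetic: the inductive hypothesis solves a $(k-1)$-problem with $d+(k-1)$ colors, but its flag structure confines the solution $\mathcal{R}'$ to at most $2(k-1)$ colors in the only relevant case $\dim L'=k-1$, leaving precisely the $d'+1$ colors needed for one Colorful Carath\'eodory step in the quotient $\R^d/L'$ that raises the lineality by one — and it is this bookkeeping that forces $d+k$ as the number of colors. Beyond that, the points to check are minor: that the Colorful Carath\'eodory selections can be taken nonzero (otherwise a degenerate one-element ``block'' could arise), the short lifting computation identifying a $k$-dimensional subspace of $\pos Z\cap(-\pos Z)$ from a line in $\pos\bar{Z}\cap(-\pos\bar{Z})$, and the exact form of the cited monochromatic Reay decomposition — where, as noted, the passage to the colorful setting is free because ``at most one element per color'' survives taking subsets.
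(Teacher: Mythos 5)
Your argument is correct, but it is organized quite differently from the paper's. The paper proves \Href{Theorem}{thm:coloful_Reay_decomposition} via a single greedy construction: it repeatedly picks a rainbow minimal positive basis of maximal cardinality in the projection onto the orthogonal complement of the lineality space accumulated so far (existence via Colorful Carath\'eodory through \Href{Corollary}{cor:existence_rainbow_basis}), uses a counting lemma to show enough unused colors remain at every stage with lineality dimension below $k$, and uses Reay's Lemma 2.5 (\Href{Lemma}{lem:Reay_property_of_iteration_step}) to propagate the size monotonicity and the dimension formula through the colorful construction itself; the positive-basis refinement is added at the end by re-applying the weak statement to copies of an extracted basis. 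You instead induct on $k$: the inductive hypothesis (for $k-1$, on $d+k-1$ colors) supplies a rainbow flag whose only role is quantitative --- in the critical case $\dim L'=k-1$ it occupies at most $2(k-1)$ colors, leaving exactly the $d'+1=d-k+2$ fresh colors needed for one Colorful Carath\'eodory application in the quotient $\R^d/L'$, which raises the lineality by one --- and then you discard the colorful flag and regenerate properties (1)--(3) inside the single rainbow set $Z$ via \Href{Proposition}{prp:basis_of_lineality_subspace} plus the classical monochromatic Reay decomposition. The ingredients (Colorful Carath\'eodory, quotient/projection to gain one dimension, tight color counting, Reay-type block bookkeeping) are the same, but your bookkeeping is outsourced to the induction hypothesis and the ordering property is never needed during the induction, which makes the colorful part cleaner; the price is that you lean on Reay's Theorem 2.6 (including $\card{R_i}\geq\card{R_{i+1}}\geq 2$, and in the form ``a positive basis of a subspace'' rather than of the whole space) as a black box, whereas the paper effectively reproves that decomposition through its maximal-cardinality choices and Lemma 2.5. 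That reliance is legitimate --- it is exactly the monochromatic specialization of the paper's Section 4 construction, so your ``routine extremal argument'' remark is accurate --- but if you write this up you should either quote Reay's theorem in the precise form you need or include the short greedy derivation from Lemma 2.5, since the non-increasing block sizes are what the application in \Href{Theorem}{thm:colorful_Caratheodory_for_lineality_subspace} depends on.
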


The number (of colors) $d+k$ in \Href{Theorem}{thm:coloful_Reay_decomposition} is optimal as follows from the next lemma.

\begin{lem}\label{lem:coloful_Reay_decomposition_bound}
For every $d \geq k \geq 1,$ there are $d+k-1$ sets $S_1, \dots, S_{d+k-1} \subset \R^d$ such that for every $i \in [d+k-1],$ $\pos S_i$ contains a $k$-dimensional linear subspace, but the positive cone of any rainbow sub-selection from $S_1, \dots, S_{d+k-1}$ does not contain a $k$-dimensional subspace.
\end{lem}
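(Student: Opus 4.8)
The plan is to exhibit an explicit extremal family. Work in $\R^d$ with standard basis $e_1,\dots,e_d$. Put $S_1=\dots=S_{2k-1}=\{\pm e_1,\dots,\pm e_k\}$, the cross-polytope spanned by the first $k$ coordinate vectors, and for $j\in[d-k]$ put $S_{2k-1+j}=\{\pm w^{(j)}_1,\dots,\pm w^{(j)}_k\}$ where the vectors $w^{(j)}_i\in\R^d$ are chosen in sufficiently general position; precisely, we impose the condition $(\ast)$: for every $J\subseteq[k]$ and every selection of at most one vector from each of the sets $S_{2k-1+j}$, the selected vectors together with $\{e_i:i\in J\}$ form a linearly independent family. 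Because such a family never has more than $k+(d-k)=d$ members, condition $(\ast)$ is a conjunction of finitely many statements, each asserting that a certain maximal minor does not vanish; each such minor is a polynomial in the coordinates of the $w^{(j)}_i$ that is not identically zero (specialize some $w^{(j)}_{l}$ to coordinate vectors $e_{k+j}$ to see it is nonzero somewhere), so a dense open set of choices of these vectors (all nonzero) satisfies $(\ast)$, and we fix one. We thus have $(2k-1)+(d-k)=d+k-1$ finite sets of nonzero vectors. The easy half is immediate: $\pos{S_i}=\mathrm{span}\{e_1,\dots,e_k\}$ for $i\le 2k-1$, and $\pos{S_{2k-1+j}}=\mathrm{span}\{w^{(j)}_1,\dots,w^{(j)}_k\}$, which is $k$-dimensional by $(\ast)$ (take $J=\emptyset$); hence every $\pos{S_i}$ is a $k$-dimensional subspace, so in particular contains one.

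The substantive claim is that no rainbow sub-selection $\mathcal R$ from $S_1,\dots,S_{d+k-1}$ has $\pos{\mathcal R}$ containing a $k$-dimensional subspace. Decompose $\mathcal R=A\cup B$, where $A=\mathcal R\cap\{\pm e_1,\dots,\pm e_k\}$ is the part taken from the cross-polytope colors $S_1,\dots,S_{2k-1}$ and $B$ is the part taken from the other $d-k$ colors. Two properties matter: $|A|\le 2k-1$, since there are only $2k-1$ cross-polytope colors; and, since $B$ contains at most one vector per auxiliary color, condition $(\ast)$ makes $B$ linearly independent with $\mathrm{span}\,B\cap\mathrm{span}\{e_1,\dots,e_k\}=\{0\}$. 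Let $L$ be the lineality space of the cone $\pos{\mathcal R}$ (its largest linear subspace); then $\pos{\mathcal R}$ contains a $k$-dimensional subspace if and only if $\dim L\ge k$. A routine fact about finitely generated cones gives $L=\pos{(\mathcal R\cap L)}$, so $\mathcal R\cap L$ positively spans the subspace $L$; hence there are coefficients $\lambda_r>0$ ($r\in\mathcal R\cap L$) with $\sum_{r\in\mathcal R\cap L}\lambda_r r=0$.

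Split this relation into its $A$-part $\sum_{a\in A\cap L}\lambda_a a\in\mathrm{span}\{e_1,\dots,e_k\}$ and its $B$-part $\sum_{b\in B\cap L}\lambda_b b\in\mathrm{span}\,B$. Since the two subspaces meet only in $0$, each part vanishes; the $B$-part is then a positive combination $\sum_{b\in B\cap L}\lambda_b b=0$ that can only be trivial, because $B$ is linearly independent, so $B\cap L=\emptyset$. Consequently $L=\mathrm{span}(A\cap L)\subseteq\mathrm{span}\{e_1,\dots,e_k\}$ and $A\cap L$ positively spans $L$. If $\dim L$ were $k$, then $L=\mathrm{span}\{e_1,\dots,e_k\}$, so $A=A\cap L$ would positively span $\mathrm{span}\{e_1,\dots,e_k\}$; but among subsets of $\{\pm e_1,\dots,\pm e_k\}$ only the whole set has this positive hull (if some $-e_i$ were missing, the positive hull would lie in the halfspace $\{x_i\ge 0\}$), so $|A|=2k$, contradicting $|A|\le 2k-1$. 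Hence $\dim L\le k-1<k$, as desired. The boundary cases are instances of this: $d=k$ gives $2d-1$ copies of the cross-polytope of $\R^d$ with no auxiliary sets, and $k=1$ gives one coordinate line together with $d-1$ lines in general position (recovering the familiar example $S_i=\{\pm e_i\}$).

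I expect the main obstacle to be calibrating the general-position hypothesis $(\ast)$ so that the positive vanishing combination on $\mathcal R\cap L$ really does split into independent $A$- and $B$-pieces — this is precisely where $\mathrm{span}\,B\cap\mathrm{span}\{e_1,\dots,e_k\}=\{0\}$ is needed, and it is what forces the number of auxiliary colors down to $d-k$, which then combines with the $2k-1$ cross-polytope colors to the exact total $d+k-1$ — while keeping $(\ast)$ realizable by genuine vectors. The two auxiliary facts used, namely $L=\pos{(\mathcal R\cap L)}$ for the lineality space of a finitely generated cone and the equivalence ``$X$ positively spans a subspace $\iff 0\in\mathrm{relint}\,\conv X$ and $\mathrm{span}\,X$ equals that subspace,'' are standard and only need to be recorded with care.
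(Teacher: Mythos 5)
Your construction is correct in substance, and it verifies the extremal property by a different route than the paper. Both proofs use the same $2k-1$ copies of the cross-polytope $\{\pm e_1,\dots,\pm e_k\}$ as the ``counting'' colors; the difference lies in the remaining $d-k$ colors and in the verification. The paper builds those colors inductively, taking $S_{2k-1+j}$ to be a positive basis of $\R^{k+j}$ all of whose vectors lie outside $\R^{k+j-1}$, and proves $\lpos{R}=\lpos{(R\cap\R^{k})}$ by induction on $d$, using that only one selected vector has a nonzero last coordinate, so $\pos{R}$ cannot contain both $u$ and $-u$ unless $u$ lies in the previous subspace. You instead take $\pm$-symmetric generic $k$-frames and argue directly: writing $\mathcal R=A\cup B$, the lineality space $L$ of $\pos{\mathcal R}$ satisfies $L=\pos{(\mathcal R\cap L)}$, the resulting positive vanishing combination splits along $\mathrm{span}\{e_1,\dots,e_k\}\oplus\mathrm{span}\,B$ (trivial intersection by $(\ast)$ with $J=[k]$), so $B\cap L=\emptyset$, and then $L\subseteq\mathrm{span}\{e_1,\dots,e_k\}$ with $\dim L\ge k$ would force $A$ to contain all $2k$ vectors $\pm e_i$, contradicting $\card{A}\le 2k-1$. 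Your verification is non-inductive and arguably more transparent about \emph{why} $d+k-1$ colors are the right count; the paper's construction avoids any genericity discussion and makes the auxiliary positive hulls full subspaces $\R^{k+j}$ explicitly.

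One step needs a patch. Condition $(\ast)$ only constrains selections of at most one vector per auxiliary color, so taking $J=\emptyset$ does \emph{not} show that $w^{(j)}_1,\dots,w^{(j)}_k$ are linearly independent, i.e.\ that $\pos{S_{2k-1+j}}=\mathrm{span}\{w^{(j)}_1,\dots,w^{(j)}_k\}$ is $k$-dimensional, which the lemma requires of each color. The repair is immediate inside your genericity framework: add, for each $j$, the condition that $w^{(j)}_1,\dots,w^{(j)}_k$ are independent (a non-identically-vanishing polynomial condition, satisfied e.g.\ at $w^{(j)}_i=e_i$), and pick the $w$'s in the intersection of all these finitely many dense open sets. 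A related cosmetic point: independence of a family of fewer than $d$ vectors is not the nonvanishing of one fixed maximal minor; but since every family occurring in $(\ast)$ extends to one of size exactly $d$ (take $J=[k]$ and one vector from every auxiliary color), it suffices to impose the finitely many full $d\times d$ determinant conditions, exactly as in your specialization $w^{(j)}_l\mapsto e_{k+j}$. With these adjustments the argument is complete; the two auxiliary facts you invoke (that the lineality space of $\pos{\mathcal R}$ is positively spanned by $\mathcal R\cap L$, and that any subspace contained in the cone lies in the lineality space) are indeed standard and are essentially the paper's Proposition on positive bases of lineality spaces.
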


We believe that a weaker version of \Href{Theorem}{thm:coloful_Reay_decomposition} with the same bound $d+k$ on the number of sets can be derived from the Colorful Carath\'eodory theorem (\Href{Proposition}{prp:colorful_Caratheodory}). However, we will use the fact that the size of $R_i$ is a monotonically decreasing function of $i$ to prove \Href{Theorem}{thm:colorful_linear_system_to_big}.

The following linear algebra consequence of \Href{Theorem}{thm:coloful_Reay_decomposition} is simply a dual statement to \Href{Theorem}{thm:colorful_linear_system_to_big}, and we believe it might be of interest.

Let $A$ be a subset of $\R^d.$ The \emph{lineality space} of $\pos{A}$ is the set $\pos{A} \cap (-\pos{A})$, which is a linear subspace of $\mathbb{R}^d$. It is the unique maximal-dimensional subspace that $\pos{A}$ contains. We use $\lpos{A}$ to denote the lineality space of $\pos{A}.$

Set $h(k,d)=\max \{d+1, 2(k+1)\}$ and $m(k,d)=\max \{d+1, 2(d-k+1)\}.$ Note that $h(k,d) = m(d-k,d).$

\begin{thm}\label{thm:colorful_Caratheodory_for_lineality_subspace} Fix $k \in [d-1].$ Assume $A_1, \dots, A_{d+k+1}$ are subsets of $\R^d,$ and $\dim \lpos R \leq k$ for every rainbow sub-selection $R$ from these sets of size at most $h(k,d)$. Then for some index $i \in [d+k+1],$ $\dim \lpos A_i \leq k.$
\end{thm}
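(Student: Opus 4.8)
The plan is to deduce this theorem from Theorem~\ref{thm:colorful_linear_system_to_big} by polar duality, after replacing the (possibly infinite) sets $A_i$ by finite subsets. The starting point is the standard fact that, for a \emph{finite} set $F \subset \Red \setminus \braces{0}$, the cone $\pos{F}$ is closed and hence equals its own bipolar; consequently the solution set $\braces{x \st \iprod{a}{x} \le 0 \text{ for all } a \in F}$ is a closed convex cone whose linear span is the orthogonal complement of $\lpos{F}$. Since a convex cone contains exactly $\dim(\text{its span})$ linearly independent vectors, this yields the dictionary we need: for finite $F$, the system $\iprod{a}{x} \le 0,\; a \in F,$ has at least $j$ linearly independent solutions if and only if $\dim \lpos{F} \le d-j$. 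I would also record the numerology $h(k,d) = m(d-k,d)$ and $d + (d-(d-k)) + 1 = d+k+1$, so that Theorem~\ref{thm:colorful_linear_system_to_big} invoked with parameter $d-k$ has precisely $d+k+1$ color classes and Helly-number $h(k,d)$.

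Next I would carry out the reduction to finite sets. Assume, towards a contradiction, that $\dim \lpos{A_i} \ge k+1$ for every $i \in [d+k+1]$; otherwise the conclusion holds outright. For each $i,$ fix a $(k+1)$-dimensional subspace $L_i \subseteq \lpos{A_i}$ with basis $b_1, \dots, b_{k+1},$ write each of the $2(k+1)$ vectors $\pm b_1, \dots, \pm b_{k+1}$ as a finite nonnegative combination of elements of $A_i,$ and let $A_i' \subseteq A_i \setminus \braces{0}$ be the finite set of all vectors occurring in these combinations. Then $L_i \subseteq \pos{A_i'} \cap (-\pos{A_i'}) = \lpos{A_i'},$ so $A_i'$ is a finite set of nonzero vectors with $\dim \lpos{A_i'} \ge k+1,$ and every rainbow sub-selection from $A_1', \dots, A_{d+k+1}'$ is also a rainbow sub-selection from $A_1, \dots, A_{d+k+1}.$

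Then I would apply Theorem~\ref{thm:colorful_linear_system_to_big} with $k$ replaced by $d-k$ (admissible since $d-k \in [d-1]$) to the finite families $A_1', \dots, A_{d+k+1}'.$ By the dictionary, each system $\iprod{a}{x} \le 0,\; a \in A_i',$ has at most $d-(k+1) < d-k$ linearly independent solutions, so the conclusion of that theorem fails; hence its hypothesis fails too, and there is a rainbow sub-selection $R$ from $A_1', \dots, A_{d+k+1}'$ with $\card{R} \le m(d-k,d) = h(k,d)$ for which $\iprod{a}{x} \le 0,\; a \in R,$ has fewer than $d-k$ linearly independent solutions. As $R$ is finite, the dictionary converts this into $\dim \lpos{R} \ge k+1.$ But $R$ is a rainbow sub-selection from $A_1, \dots, A_{d+k+1}$ of size at most $h(k,d),$ so the hypothesis of the present theorem forces $\dim \lpos{R} \le k,$ a contradiction. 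Hence $\dim \lpos{A_i} \le k$ for some $i.$

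I do not expect a serious obstacle, since all the geometry is already contained in Theorem~\ref{thm:colorful_linear_system_to_big}; the only points requiring care are (i) checking that the substitution $k \mapsto d-k$ matches both the Helly-number $h(k,d) = m(d-k,d)$ and the number of colors $d+k+1,$ and (ii) the passage from arbitrary $A_i$ to finite $A_i',$ which preserves the hypothesis automatically and preserves the negated conclusion precisely because finitely generated cones are closed, so that the polar-cone dimension identity applies. An alternative would be to argue straight from the Colorful Reay decomposition (Theorem~\ref{thm:coloful_Reay_decomposition}) with parameter $k+1,$ which also uses exactly $d+k+1$ sets; but then one must trim the decomposition $R_1 \cup \dots \cup R_m$ down to a sub-selection of size at most $h(k,d)$ whose positive hull still contains a $(k+1)$-dimensional subspace, and controlling that size is exactly the subtlety the proof of Theorem~\ref{thm:colorful_linear_system_to_big} already resolves.
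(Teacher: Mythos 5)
Your reduction is circular in the context of this paper. You deduce the present statement from \Href{Theorem}{thm:colorful_linear_system_to_big}, but the paper's only proof of that theorem (in \Href{Section}{sec:Colorful_Helly_lineality_and_dual}) consists of the same polar dictionary (\Href{Lemma}{lem:sol_set_via_polar_of_cone}) followed by an appeal to precisely the statement you are proving, with $k$ replaced by $d-k$. Your dictionary (for finite sets of nonzero vectors, at least $j$ independent solutions iff $\dim \lpos{F}\le d-j$), your finite-subset reduction $A_i\rightsquigarrow A_i'$, and the bookkeeping $h(k,d)=m(d-k,d)$, $d+(d-(d-k))+1=d+k+1$ are all correct; but what they establish is the equivalence of the two theorems, not a proof of either. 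Spliced into the paper, the logic would read \Href{Theorem}{thm:colorful_linear_system_to_big} $\Rightarrow$ \Href{Theorem}{thm:colorful_Caratheodory_for_lineality_subspace} $\Rightarrow$ \Href{Theorem}{thm:colorful_linear_system_to_big}, with nothing anchoring either statement to \Href{Theorem}{thm:coloful_Reay_decomposition}, where the actual content lives.

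The missing idea is exactly the step your closing paragraph defers, and you misattribute its resolution: trimming the Colorful Reay decomposition down to a rainbow sub-selection of size at most $h(k,d)$ is \emph{not} handled inside the proof of \Href{Theorem}{thm:colorful_linear_system_to_big}; it is the body of the paper's proof of the present theorem. There one assumes $\dim\lpos{A_i}\ge k+1$ for all $i$, applies \Href{Theorem}{thm:coloful_Reay_decomposition} with $k+1$ in place of $k$ (using all $d+k+1$ colors), and proves by induction on $j$ that $\dim \pos{\braces{R_1\cup\dots\cup R_j}}\le k$: since the blocks satisfy $\card{R_1}\ge\dots\ge\card{R_j}\ge 2$ and $\sum_{i\in[j-1]}\card{R_i}=\dim \pos{\braces{R_1\cup\dots\cup R_{j-1}}}+(j-1)\le k+(j-1)$, one gets $\card{R_j}\le\frac{k+(j-1)}{j-1}$, hence $\dim \pos{\braces{R_1\cup\dots\cup R_j}}\le\frac{jk}{j-1}$ and $j-1\le k$, and then \Href{Lemma}{lem:h(k,d)_arithmetic} gives $\sum_{i\in[j]}\card{R_i}=\dim \pos{\braces{R_1\cup\dots\cup R_j}}+j\le\frac{jk}{j-1}+j\le h(k,d)$; so the hypothesis of the theorem applies to the rainbow sub-selection $R_1\cup\dots\cup R_j$ (whose positive hull is a linear subspace, hence equal to its lineality space) and bounds its dimension by $k$, which at $j=m$ contradicts property (3) of the decomposition. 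To make your route non-circular you would first need an independent proof of \Href{Theorem}{thm:colorful_linear_system_to_big}, which amounts to reproducing this induction on the dual side.
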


The author believes that there are deep interconnections between the geometry of positive bases and Quantitative Helly-type and Carathéodory-type results, in particular with the quantitative version of Steinitz’s theorem \cite{steinitz1913bedingt}. 
The latter asserts that if the origin in $\R^d$ is contained in the interior of the convex hull of a set $S$, then there exist at most $2d$ points of $S$ whose convex hull still contains the origin in its interior.  
Equivalently, Steinitz's theorem implies that the cardinality of a positive basis of $\R^d$ is at most $2d$.  
The author believes that the reformulation of the Quantitative Steinitz problem in the language of positive bases is missing in the literature:

\medskip
\noindent
\textbf{Quantitative Steinitz Problem for positive bases.} \emph{ 
Determine the largest number $r_d$ with the following property:  
whenever the Euclidean unit ball in $\R^d$ is contained in the convex hull of a set $S$, the set $S$ contains a subset $\{b_1,\dots,b_{k}\}$ with $ k \leq 2d$ such that  
\[
\{b_1 - r u,\dots,b_k - r u\} \quad\text{contains a positive basis of }\R^d
\]
for every unit vector $u$ and every $r \in [0, r_d).$}

\medskip

Moreover, this point of view was the main motivation for the currently best bound in the quantitative Steinitz problem \cite{ivanov2024steinitz, Ivanov2025sphericalsteinitz, ivanov_QST_polarity_2025}.  
As suggested in \cite{ivanov2022quantitative} and further developed in \cite{almendra2022quantitative}, the same phenomenon also appears in quantitative Helly-type theorems via polar duality.

The rest of the paper is organized as follows: In the next section, we establish our notation and introduce several key technical lemmas that will be utilized throughout the paper. Next, we will derive \Href{Theorem}{thm:colorful_Caratheodory_for_lineality_subspace} and \Href{Theorem}{thm:colorful_linear_system_to_big} from \Href{Theorem}{thm:coloful_Reay_decomposition} in \Href{Section}{sec:Colorful_Helly_lineality_and_dual}. In \Href{Section}{sec:coloful_Reay_decomposition}, we will obtain \Href{Theorem}{thm:coloful_Reay_decomposition}. It is the longest and most technical proof in the paper; it does not require any other results from the paper except for the well-known results discussed in the next section. In \Href{Section}{sec:optimality_helly_numbers}, we will prove \Href{Lemma}{lem:coloful_Reay_decomposition_bound} and show that it implies that the numbers of colors $d+(d-k)+1$ and $d+k+1$ in \Href{Theorem}{thm:colorful_linear_system_to_big} and \Href{Theorem}{thm:colorful_Caratheodory_for_lineality_subspace} are optimal. In the final section, we will obtain a weak nonhomogeneous version of \Href{Theorem}{thm:colorful_linear_system_to_big}.

\section{Notation, general properties of positive bases and cones}
\subsection{Notation}
 We use $[n]$ to denote the set $\{1, \dots, n\}$ for a natural $n.$  The inner product of two vectors $x$ and $y$ of 
 $\R^d$ is denoted by $\iprod{x}{y}.$ The \emph{linear hull} of a set $S$ is denoted by $\Lin{S}.$
 
 Recall that by a \emph{rainbow sub-selection} from several sets (colors), 
 we are referring to choosing at most one element from each set (color). 
By a \emph{rainbow selection} from several sets, we understand a choice of exactly one element from each set (color).
 
 Recall that $h(k,d)=\max \{d+1,2(k+1)\}$ and $m(k,d)=\max \{d+1,2(d-k+1)\}.$ 

 We recall that the \emph{polar} of a set $S \subset \Red$ is defined by
\[
\polarset{S} = \braces{x \in \Red \st \iprod{x}{s} \leq 1 \quad \text{for all} \quad s \in S}.
\]

The set $S^{\circ \circ}$ is called the \emph{bipolar} of $S.$
Recall  the bipolar theorem (\cite[Theorem 1.12.1]{PolBalEng}):
\begin{prp}[Bipolar theorem]
    \label{prp:bipolar theorem}
For any set $K \subset \R^d,$ the bipolar
$K^{\circ \circ}$ coincides with the closure of 
$\conv \! \parenth{K \cup \{0\}}.$   
\end{prp}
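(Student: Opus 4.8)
The plan is to prove the two inclusions separately. Write $C := \overline{\conv\!\parenth{K \cup \{0\}}}$ for the closed convex hull of $K$ together with the origin; I must show $\polarset{(\polarset{K})} = C$.

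For the inclusion $C \subseteq \polarset{(\polarset{K})}$, I would first note that $\polarset{(\polarset{K})} = \bigcap_{y \in \polarset{K}}\braces{x \in \R^d \st \iprod{x}{y} \leq 1}$ is an intersection of closed half-spaces (the index set $\polarset{K}$ is nonempty, since $\iprod{0}{s} = 0 \leq 1$ for every $s$ gives $0 \in \polarset{K}$), hence $\polarset{(\polarset{K})}$ is closed and convex. It contains $K$: if $x \in K$ then $\iprod{x}{y} \leq 1$ for every $y \in \polarset{K}$ by the definition of the polar, so $x \in \polarset{(\polarset{K})}$; and it contains $0$. A closed convex set containing $K \cup \{0\}$ contains its closed convex hull, so $C \subseteq \polarset{(\polarset{K})}$.

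For the reverse inclusion I would argue by contraposition: fix $x_0 \notin C$ and produce $y_0 \in \polarset{K}$ with $\iprod{x_0}{y_0} > 1$, so that $x_0 \notin \polarset{(\polarset{K})}$. Since $C$ is a nonempty (it contains $0$) closed convex subset of $\R^d$, the metric projection $p$ of $x_0$ onto $C$ exists, and the obtuse-angle (variational) characterization of the projection gives $\iprod{c - p}{x_0 - p} \leq 0$ for all $c \in C$; putting $y := x_0 - p \neq 0$ this reads $\iprod{c}{y} \leq \iprod{p}{y}$ for all $c \in C$, while $\iprod{x_0}{y} = \iprod{p}{y} + \enorm{y}^2 > \iprod{p}{y}$. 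Pick $\alpha$ strictly between $\iprod{p}{y}$ and $\iprod{x_0}{y}$. Because $0 \in C$ we have $0 = \iprod{0}{y} \leq \iprod{p}{y} < \alpha$, so $\alpha > 0$, and I may set $y_0 := y/\alpha$. Then for every $k \in K \subseteq C$ one gets $\iprod{k}{y_0} = \iprod{k}{y}/\alpha \leq \iprod{p}{y}/\alpha < 1$, so $y_0 \in \polarset{K}$, whereas $\iprod{x_0}{y_0} = \iprod{x_0}{y}/\alpha > 1$. This finishes the proof, and the degenerate case $K = \emptyset$ (both sides equal $\{0\}$) is handled by exactly the same reasoning.

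I expect the only delicate point to be the separation step: one must use that $C$ is \emph{closed} (so the projection exists and strict separation is available even when $C$ is unbounded — a compactness-based proof would fail here) and that $0 \in C$, which is precisely what forces the separating functional to be normalizable (i.e.\ $\alpha > 0$) so that the separating hyperplane can be written in the form $\iprod{\cdot}{y_0} = 1$ that appears in the definition of the polar. Everything else is routine bookkeeping.
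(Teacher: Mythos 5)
Your proof is correct. The paper does not prove this statement at all: it is quoted as a known result with a citation to a textbook (Theorem~1.12.1 of the cited reference), so there is no in-paper argument to compare against. What you give is the standard proof of the bipolar theorem: the easy inclusion follows because $K^{\circ\circ}$ is a closed convex set containing $K\cup\{0\}$, and the reverse inclusion is strict separation of a point $x_0\notin \overline{\conv\!\left(K\cup\{0\}\right)}$ via the metric projection, with the membership $0\in \overline{\conv\!\left(K\cup\{0\}\right)}$ guaranteeing that the separating threshold $\alpha$ is positive and hence that the functional can be normalized to lie in $K^{\circ}$; your handling of the degenerate case $K=\emptyset$ is also fine. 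This is exactly the argument one finds in the standard references, so nothing further is needed.
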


%

\subsection{Positive basis and minimal positive basis}
Let $S$ be a subset of a linear space. Its \emph{positive hull}, denoted $\pos S,$ 
is the set of all finite linear combinations of elements of $S$ with non-negative coefficients.  We say that a set $B$ is a \emph{positive basis} of a  finite-dimensional linear space $L$ if $L = \pos B,$ but $L \neq \pos \!\parenth{B \setminus\{b\}}$ for any $b \in B.$ 

Let $B$ be a positive basis of $L.$
We say that a linear subspace $L^\prime$ of $L$  is a \emph{positively spanned subspace with respect to} $B$ if $B \cap L^\prime$ is a positive basis of $L^\prime.$
Additionally, if $\dim L^\prime = n$ and $\card{B \cap L^\prime} = n+1,$ then we say that  $L^\prime$ is \emph{minimal} and $B \cap L^\prime$ is the minimal positive basis of $L^\prime.$
Clearly, the minimal positive basis of a non-trivial space consists of at least two vectors.

We will use next lemma, proven in \cite[Theorem 4.1]{davis1954theory}.
\begin{prp}\label{prp:basis_contains_minimal_subbasis}
Let $B$ be a positive basis of a finite-dimensional space $L.$
Then $B$ contains a minimal positive basis of some minimal linear subspace of $L.$
\end{prp}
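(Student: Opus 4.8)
The plan is to obtain the desired subspace as the linear span of an inclusion‑minimal positively dependent subset of $B$, and then to use the minimality property of the positive basis $B$ to forbid any further vectors of $B$ from lying in that span. Two preliminaries: first, a positive basis contains no zero vector, since a zero vector is always removable without changing the positive hull; second, for $P\subseteq B$ the cone $\pos P$ is a linear subspace if and only if $0$ lies in the relative interior of $\conv P$, and in that case $P$ carries a linear dependence with all coefficients strictly positive (sum up, for each $p\in P$, a nonnegative representation of $-p$ by the other vectors). Accordingly, among all $P\subseteq B$ for which $\pos P$ is a linear subspace, pick one, $P_0$, of minimal cardinality; this family is nonempty because $P=B$ works, as $\pos B=L$. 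Put $L'=\pos P_0=\Lin{P_0}$ and $n'=\dim L'$, and note $\card{P_0}\ge 2$.

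Next I would show $\card{P_0}=n'+1$, so that $P_0$ is a \emph{minimal} positive basis of $L'$. By the above, $P_0$ carries a dependence $\sum_{p\in P_0}\lambda_p p=0$ with all $\lambda_p>0$. If the space of linear dependences of $P_0$ had dimension at least two, I would take a dependence $\nu$ not proportional to $\lambda$ and slide $\lambda$ along the line $\{\lambda-t\nu\}$ until some coordinate first vanishes, at a parameter $t^{*}\ne 0$; this is possible since $\nu\ne 0$ and all coordinates of $\lambda$ are positive. Then $\lambda-t^{*}\nu$ is a nonzero nonnegative dependence supported on a proper subset $Q$ of $P_0$ with $\card Q\ge 2$, so $\pos Q$ is again a nonzero linear subspace, contradicting the minimality of $\card{P_0}$. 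Hence the dependence space is one‑dimensional, $\card{P_0}=n'+1$, and every $p\in P_0$ occurs with nonzero coefficient in the (now unique, all‑positive) dependence; consequently each $P_0\setminus\{p\}$ is linearly independent, its positive hull is a pointed cone strictly inside $L'$, and therefore $P_0$ is a positive basis of $L'$.

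Finally I would check that $B\cap L'=P_0$. If some $b^{*}\in B\setminus P_0$ lay in $L'=\pos P_0$, then $b^{*}\in\pos P_0\subseteq\pos(B\setminus\{b^{*}\})$, so $\pos(B\setminus\{b^{*}\})=\pos B=L$, contradicting that $B$ is a positive basis. Thus $B\cap L'=P_0$, a positive basis of $L'$ of cardinality $\dim L'+1$, which is exactly the statement that $L'$ is a minimal linear subspace with respect to $B$ and that $B$ contains its minimal positive basis.

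I expect the second step to be the real obstacle: the perturbation that trims a linear dependence down to a strictly smaller support, together with the bookkeeping needed to ensure the trimmed dependence is nonzero and supported on at least two vectors. By contrast the last step --- though it is what pins the size to exactly $\dim L'+1$ --- is a one‑line consequence of the definition of a positive basis, and the first step, given the standard equivalence between ``$\pos P$ is a subspace'' and ``$0\in\mathrm{relint}\,\conv P$'', is routine.
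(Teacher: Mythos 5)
Your proof is correct, and there is nothing in the paper to compare it against line by line: the paper does not prove this proposition, it simply quotes it from Davis's 1954 paper on positive linear dependence (Theorem 4.1 there). Judged on its own, your argument is a complete and essentially classical derivation: you take $P_0\subseteq B$ of minimal cardinality among subsets whose positive hull is a nontrivial linear subspace; you use the all-positive dependence carried by $P_0$ together with the sliding (Carath\'eodory-type) trimming of a second, non-proportional dependence to contradict minimality, which pins the dependence space to dimension one, gives $\card{P_0}=\dim \pos P_0 +1$, and makes each $P_0\setminus\{p\}$ linearly independent, so that $P_0$ is a minimal positive basis of $L'=\pos P_0$; and you then invoke the defining minimality of the positive basis $B$ to rule out any $b^*\in (B\cap L')\setminus P_0$, so that $B\cap L'=P_0$, which is exactly what the paper's definition of a minimal positively spanned subspace asks for. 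Two small points of hygiene rather than gaps: when you choose $P_0$ you should say ``nontrivial linear subspace'' explicitly (otherwise $P=\emptyset$, with $\pos \emptyset=\{0\}$, is the trivial minimizer), which is clearly your intent since you immediately assert $\card{P_0}\geq 2$; and the dependence-space bookkeeping tacitly assumes $P_0$ is finite, which is harmless here --- positive bases of finite-dimensional spaces are finite, and in any case a finite all-positive dependence inside $B$ always exists because $-p\in\pos B$ is a finite nonnegative combination for any $p\in B$, so the minimal-cardinality $P_0$ is automatically finite.
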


\begin{rem}
    \label{rem:first_rainbow_set}
    The assertion of \Href{Theorem}{thm:coloful_Reay_decomposition} implies that $R_1$ is a minimal positive basis of its positive hull.
\end{rem}

\subsection{Convex cones}
A \emph{convex cone} $C$ in $\R^d$ is  a set of the form 
$a + \pos\! {A}$ for some $A \subset \R^d.$ In this case, we will call $a$ an \emph{apex} of $C.$ Note that, according to our definition, an apex is not necessarily unique.  Here and in the rest of  the paper,  we do not assume that the origin is   the apex of a cone!  By a \emph{non-trivial} cone we understand a cone that differs from 
$\{0\}.$

We say that a convex cone  with apex at $a$ is \emph{pointed} if the cone does not contain any line. Clearly, if a convex cone is pointed, then it has a unique apex. 

The following lemma is elementary and  follows from the separation lemma.
\begin{lem}
    \label{lem:dual_of_pointed_cone}
    Assume $C$ is a closed convex  cone in $\R^d.$ If $C$ is pointed, then $C^\circ$ is full-dimensional in $\R^d.$   
\end{lem}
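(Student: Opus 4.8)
The plan is to reduce to the case where the apex sits at the origin, where $C^\circ$ becomes the usual dual cone, and then run a direct separation argument. Since $C$ is pointed it has a unique apex $a$, and because $C$ is closed we may write $C = a + K$ with $K = \overline{\pos A}$ a closed convex cone with apex at the origin; $K$ contains no line (it is the recession cone of $C$), so $K$ is pointed as well. We may also assume $K \neq \{0\}$, since otherwise $K^\circ = \R^d$ and the claim is trivial after the reduction below.

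Next I would identify $C^\circ$ explicitly. Because $K$ is a cone, $\iprod{x}{\cdot}$ is unbounded above on $K$ unless $\iprod{x}{y} \le 0$ for every $y \in K$; using also $0 \in K$ one obtains
\[
  C^\circ = \braces{x \st \iprod{x}{a+y} \le 1 \text{ for all } y \in K} = K^\circ \cap \braces{x \st \iprod{x}{a} \le 1},
\]
where $K^\circ = \braces{x \st \iprod{x}{y} \le 0 \text{ for all } y \in K}$ is the dual cone of $K$. The truncating halfspace contains the origin in its interior (or is all of $\R^d$ if $a=0$), while $K^\circ$ is itself a cone with apex $0$; hence any $x \in K^\circ$ satisfies $tx \in C^\circ$ for all sufficiently small $t>0$, so $C^\circ$ spans the same linear subspace as $K^\circ$. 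It therefore suffices to prove that the dual cone $K^\circ$ of the pointed closed convex cone $K$ is full-dimensional.

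For this final step I would argue from separation. The set $D = \conv\parenth{K \cap \Sed}$ is compact and convex, and $0 \notin D$: a representation $0 = \sum_i \lambda_i y_i$ with $\lambda_i > 0$, $y_i \in K \cap \Sed$, would give $\lambda_1 y_1 = -\sum_{i \ge 2} \lambda_i y_i \in K \cap (-K) = \{0\}$ by pointedness, contradicting $\abs{y_1} = 1$. Strictly separating the point $0$ from the compact convex set $D$ produces a unit vector $u$ and $\delta > 0$ with $\iprod{u}{y} \le -\delta$ for all $y \in K \cap \Sed$, and hence $\iprod{u}{y} \le -\delta\abs{y}$ for every $y \in K$. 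Then any $u'$ with $\abs{u'-u} < \delta$ still satisfies $\iprod{u'}{y} < 0$ for all $y \in K \cap \Sed$, so (by scaling) $u' \in K^\circ$; thus $u$ is an interior point of $K^\circ$, making $K^\circ$, and with it $C^\circ$, full-dimensional. The only real subtlety here is the bookkeeping forced by the non-origin apex — the polar of a translated cone is not itself a cone — which is entirely absorbed into the displayed identity; the core is the standard fact that a pointed closed convex cone has a full-dimensional dual, for which separation is exactly the right tool. As an alternative to the last paragraph, one could observe that if $K^\circ$ lay in a hyperplane $v^\perp$, then both $v$ and $-v$ would lie in the bipolar $K^{\circ\circ} = K$ (by \Href{Proposition}{prp:bipolar theorem} applied to the cone $K$), so $K$ would contain the line $\R v$, contradicting pointedness.
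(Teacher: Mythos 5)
Your proof is correct, and it follows the route the paper intends: the paper states this lemma without proof, remarking only that it ``elementary follows from the separation lemma,'' and your strict separation of the origin from $\conv\parenth{K \cap \Sed}$ is precisely that argument, with the added (and welcome) bookkeeping for a possibly non-origin apex and the bipolar-theorem alternative.
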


\subsection{Lineality space}

The \emph{lineality space} of $\pos A$ is denoted by $\lpos A$. 
Surprisingly, although positive bases and lineality spaces are used extensively in several areas of mathematics, their basic properties are scattered throughout the literature (often without proofs). 
We will continue this tradition here, as we will need only two elementary facts.

We use $\oplus$ to denote the direct sum. 
Let $A$ be a set in $\R^d$, and denote $L = \lpos A$ and $L^\perp$ its orthogonal complement.  
As mentioned on page~65 of \cite{rockafellar1970convex}, one can express $\pos A$ as the direct sum of $L$ and $L^\perp \cap \pos A$ whenever $L$ is non-trivial.  
We will need the following slightly more general statement, which follows from standard linear-algebraic considerations:

\begin{prp}\label{prp:basis_and_orth_complement}
Let $L$ be  a non-trivial subspace of $\R^d$, and let $L^\perp$ be its orthogonal complement.
Let $P$ denote the orthogonal projection onto $L^{\perp}$.
Let $A$ be a subset of $\R^d$ such that $\lpos A \supset L$.
Then
\[
\pos A = L \oplus \pos P(A).
\]
In the special case $\lpos A = L$, the cone $\pos P(A)$ is pointed.
\end{prp}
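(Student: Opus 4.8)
The plan is to establish the equality $\pos A = L \oplus \pos P(A)$ by proving the two inclusions, then to observe that the sum is automatically direct, and finally to treat pointedness. Write $Q = \id - P$ for the orthogonal projection onto $L$, so that every $a \in A$ splits as $a = Qa + Pa$ with $Qa \in L$ and $Pa \in L^{\perp}$. For the inclusion $\pos A \subseteq L + \pos P(A)$, I would take an arbitrary $x = \sum_i \lambda_i a_i \in \pos A$ (a finite combination with $\lambda_i \geq 0$, $a_i \in A$) and rewrite it as $x = \sum_i \lambda_i Qa_i + \sum_i \lambda_i Pa_i$: the first summand lies in $L$ because $L$ is a linear subspace, and the second is a nonnegative combination of elements of $P(A)$, hence lies in $\pos P(A)$.

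For the reverse inclusion I would use the hypothesis $\lpos A \supseteq L$ in \emph{both} directions: since $\lpos A = \pos A \cap (-\pos A)$ and $L = -L$, it gives $L \subseteq \pos A$ \emph{and} $-L \subseteq \pos A$. Hence for each $a \in A$ the vector $-Qa$ lies in $L \subseteq \pos A$, and since $\pos A$ is a convex cone (so closed under addition), $Pa = a + (-Qa) \in \pos A$. Thus $P(A) \subseteq \pos A$, and therefore $\pos P(A) \subseteq \pos A$; together with $L \subseteq \pos A$ and closure under addition this yields $L + \pos P(A) \subseteq \pos A$. Directness is then immediate: $P(A) \subseteq L^{\perp}$ forces $\pos P(A) \subseteq L^{\perp}$, so $L \cap \pos P(A) \subseteq L \cap L^{\perp} = \{0\}$, and we conclude $\pos A = L \oplus \pos P(A)$.

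For the final assertion, assume $\lpos A = L$. I would show the lineality space of $\pos P(A)$ is trivial: if $\pm v \in \pos P(A)$, then on one hand $v \in L^{\perp}$, and on the other hand $\pm v \in \pos A$ (as $\pos P(A) \subseteq \pos A$), so $v \in \pos A \cap (-\pos A) = \lpos A = L$; hence $v \in L \cap L^{\perp} = \{0\}$. In the setting of interest here, where $A$ is finite, $\pos P(A)$ is a polyhedral cone, for which trivial lineality space is exactly pointedness, and the claim follows. I do not expect a real obstacle — the argument is pure cone bookkeeping — but two points deserve care: the hypothesis $\lpos A \supseteq L$ must be invoked in both inclusions (it is precisely $-L \subseteq \pos A$ that drives the reverse containment), and the equivalence between ``pointed'' (contains no line) and ``trivial lineality space'' should only be used for closed (e.g.\ polyhedral) cones, which is the relevant case.
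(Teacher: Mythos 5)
Your proof is correct and complete: both inclusions, the directness of the sum, and the triviality of the lineality space of $\pos P(A)$ are all established cleanly, and your caveat about deducing pointedness (no line) from trivial lineality only for closed -- e.g.\ finitely generated -- cones is exactly the right point of care, since that is how the proposition is actually used in the paper (with $A$ finite). The paper itself offers no proof, labelling the statement a linear algebra exercise, and your argument is precisely the expected one, so there is nothing further to compare.
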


Using the previous statement, one obtains the following:

\begin{prp}\label{prp:basis_of_lineality_subspace}
Let $A$ be a subset of $\R^d$ with a non-trivial lineality space $\lpos A$.
Then $A$ contains a positive basis of $\lpos A$.
\end{prp}


\section{Proofs of \Href{Theorem}{thm:colorful_Caratheodory_for_lineality_subspace} and \Href{Theorem}{thm:colorful_linear_system_to_big}}
\label{sec:Colorful_Helly_lineality_and_dual}

We start by showing  how to derive \Href{Theorem}{thm:colorful_Caratheodory_for_lineality_subspace} from \Href{Theorem}{thm:coloful_Reay_decomposition}.

We will use the following simple arithmetic lemma.  
Recall that $h(k,d)=\max \{d+1,2(k+1)\}.$
\begin{lem}\label{lem:h(k,d)_arithmetic}
Let $j,k,d$ be integers satisfying $1 \leq j-1 \leq k \leq d-1.$ Then
$\frac {jk}{j-1}+j \leq h(k,d).$
\end{lem}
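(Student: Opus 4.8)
The plan is to bypass the maximum in $h(k,d)=\max\{d+1,2(k+1)\}$ by proving the sharper, purely arithmetic inequality
\[
\frac{jk}{j-1}+j\le 2(k+1),
\]
which suffices because $2(k+1)\le\max\{d+1,2(k+1)\}=h(k,d)$ trivially (so the hypothesis $k\le d-1$ is not actually needed for the argument, only to identify $h(k,d)$ as the relevant target). Observe first that $j-1\ge 1$ forces $j\ge 2$, hence $j-1>0$ and we may clear denominators without reversing the inequality.

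Next I would rewrite the displayed inequality, after multiplying through by $j-1$, as $jk+j(j-1)\le 2(k+1)(j-1)$, i.e. $2(k+1)(j-1)-jk-j(j-1)\ge 0$. Expanding and collecting terms, the left-hand side equals $(j-2)\bigl(k-(j-1)\bigr)$; this factorization is the one mildly nonobvious step and is verified by a direct expansion. Both factors are nonnegative by hypothesis: $j-2\ge 0$ since $j\ge 2$, and $k-(j-1)\ge 0$ since $j-1\le k$. Hence the product is $\ge 0$, which gives $\frac{jk}{j-1}+j\le 2(k+1)\le h(k,d)$.

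There is no real obstacle here — the statement is elementary — and the only points requiring care are checking $j\ge 2$ before dividing by $j-1$ and getting the factorization right. If one prefers to avoid clearing denominators, an equivalent route is to write $\frac{jk}{j-1}=k+\frac{k}{j-1}$, reduce the claim to $\frac{k}{j-1}\le k+2-j$, and then with $t:=j-1\ge 1$ and $k\ge t$ rearrange it to $(t-1)(k-t)\ge 0$; either way the same nonnegativity observation closes the proof.
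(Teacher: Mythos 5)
Your proof is correct. Note that the paper itself gives no argument here: it cites Claim 3.1 of B\'ar\'any's paper and omits the proof, so there is nothing internal to compare against. Your route is a clean, self-contained verification, and it proves the slightly stronger statement $\frac{jk}{j-1}+j\le 2(k+1)$, which indeed implies the lemma since $2(k+1)\le\max\{d+1,2(k+1)\}=h(k,d)$; this bypasses any case analysis on which term realizes the maximum. The key identity checks out: with $j\ge 2$ one has
\[
2(k+1)(j-1)-jk-j(j-1)=(j-2)\bigl(k-(j-1)\bigr)\ge 0,
\]
both factors being nonnegative by $j-1\ge 1$ and $j-1\le k$, and the equality cases $j=2$ and $j=k+1$ show your sharper bound is tight at both endpoints (consistent with the convexity of $j\mapsto k+\frac{k}{j-1}+j$). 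Your alternative substitution $t=j-1$, reducing everything to $(t-1)(k-t)\ge 0$, is the same observation in different clothing. The only hypothesis you discard, $k\le d-1$, is indeed irrelevant to the inequality itself, exactly as you say.
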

\begin{proof}
The lemma was proved in \cite[Claim~3.1]{barany2024positive}. For the sake of completeness, we provide an alternative short computation:
\[ \frac {jk}{j-1}+j \leq 2 (k+1) 
\quad \Longleftrightarrow \quad 
j-2 \leq \frac{j-2}{j-1} k. \]
The latter inequality holds whenever $1 \le j-1 \le k$, which is exactly our assumption. The lemma follows since $2(k+1) \leq h(k,d).$
\end{proof}

Let us continue with the proof of \Href{Theorem}{thm:colorful_Caratheodory_for_lineality_subspace}. 
We essentially copy the proof given in \cite{barany2024positive} for the monochromatic version of the result.
\begin{proof}[Proof of \Href{Theorem}{thm:colorful_Caratheodory_for_lineality_subspace}]
Define $L_i = \lpos A_i,$ $i \in [d+k+1],$ 
and $t = \min\limits_{i \in [d+k+1]} \dim L_i.$ 
Assume that $t \geq k+1,$ otherwise, there is nothing to prove. 
By \Href{Theorem}{thm:coloful_Reay_decomposition},    there is a rainbow sub-selection $\mathcal{R}$ 
from the sets $A_1, \dots, A_{d+k+1}$, together with its partition $\mathcal{R} = R_1 \cup \cdots \cup R_m$ 
such that 
\begin{enumerate}
\item $\card{R_i} \geq \card{R_{i+1}} \geq 2$ for every $i \in [m-1]$.
\item  For every $i \in [m],$ $\pos\! \braces{R_1 \cup \cdots \cup R_i}$ is a linear subspace of $\Red$ of dimension  
$ \sum\limits_{j \in [i]}\card{R_j}- i,$ and $\bigcup\limits_{j \in [i]} R_j$ is its positive basis.
\item   $ \pos\!\! \braces{R_1 \cup \cdots \cup R_m}$ contains a $(k+1)$-dimensional linear subspace. 
\end{enumerate}
The last property means that  $R_1 \cup \dots \cup R_m$ is a positive basis of at least $(k+1)$-dimensional linear space $\pos\!\! \braces{R_1 \cup \cdots \cup R_m}.$

We now prove by induction on $j$ that $ \dim \pos \! \braces{R_1\cup \ldots \cup R_j} \leq k.$  

The base case $j=1.$ 
Since $R_1$ is a subset of $\R^d,$ $\dim \pos{R_1} \leq d.$ As $R_1$ is a minimal positive basis of $\pos{R_1},$ we have $\card{R_1} \leq d+1   \leq h(k,d).$ By the assumption of the theorem, $\dim{  \lpos R_1} \leq k.$

For the inductive step $j-1 \to j$, assume that $j\geq 2.$ Since $R_j$ has the smallest size among the sets $R_1, \ldots, R_j,$ we have 
\begin{equation}
\label{eq:last_rainbow_size}
\card{R_j}\leq \frac{1}{j-1} \sum\limits_{i \in [j-1]}\card{R_i}.
\end{equation} 
By the choice of the sets $R_1,\dots,R_{j-1}$ and by the induction hypothesis,
\begin{equation}
\label{eq:sum_card_rainbow}
\sum\limits_{i \in [j-1]}\card{R_i} =  \dim \pos\! \braces{R_1 \cup \cdots \cup R_{j-1}} + (j-1) \leq k+(j-1).
\end{equation}

Since each $\card{R_i} \ge 2$, we have
$\dim \pos \! \braces{R_1 \cup \dots \cup R_j} \ge j$ for every $j \in [m]$.
Therefore,
\begin{eqnarray*}
j \leq  \dim \pos\! \braces{R_1 \cup \cdots \cup R_{j}} = \sum\limits_{i \in [j-1]}\card{R_i} +\card{R_j} - j\\
    \stackrel{\eqref{eq:sum_card_rainbow} \text{ and } \eqref{eq:last_rainbow_size}}{\leq}  k+(j-1)+\frac {k+(j-1)}{j-1}-j=\frac {jk}{j-1}.
\end{eqnarray*}
Consequently, $j\leq \frac {jk}{j-1},$ which implies that $j-1\le k.$

We can now apply
 \Href{Lemma}{lem:h(k,d)_arithmetic}. It implies that  
\[
\sum\limits_{i \in [j]}\card{R_j} = 
\dim \pos\! \braces{R_1 \cup \cdots \cup R_{j}} + j   
\leq h(k,d).
\]
 Thus, 
by the assumption of the theorem, $\dim \pos\! \braces{R_1 \cup \cdots \cup R_{j}} \leq k.$ 

Finally, for $j=m,$ we get  $k+1 \leq \dim \pos\!\! \braces{R_1 \cup \cdots \cup R_m} \leq k. $ This contradiction shows that our assumption was false. We conclude $t \leq k,$ completing the proof of  \Href{Theorem}{thm:colorful_Caratheodory_for_lineality_subspace}.
\end{proof}

To prove \Href{Theorem}{thm:colorful_linear_system_to_big}, we need the following simple technical lemma.

\begin{lem}\label{lem:sol_set_via_polar_of_cone}
Assume $A$ is a  finite set of nonzero vectors in $\R^d.$  
We  use $L^\perp$ to denote the orthogonal complement of the lineality space
$\lpos A,$ and $P$ to denote the orthogonal projection onto $L^\perp.$
Denote the solution set of the homogeneous system $\iprod{a}{x} \leq 0, a \in A$ by $S.$
Then $S$ is a full-dimensional cone within $L^\perp,$ and the identities
\[
S = \polarset{\parenth{\pos A}} = L^\perp \cap \polarset{\parenth{\pos P(A)}}
\] 
hold.
\end{lem}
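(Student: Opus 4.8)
\textbf{Proof plan for \Href{Lemma}{lem:sol_set_via_polar_of_cone}.}
The plan is to unwind the definitions and use the bipolar theorem together with \Href{Proposition}{prp:basis_and_orth_complement}. First I would observe that the condition $\iprod{a}{x}\leq 0$ for all $a\in A$ is, by linearity in $a$, equivalent to $\iprod{y}{x}\leq 0$ for all $y\in\pos A$; hence $S$ is exactly the set of $x$ with $\iprod{y}{x}\leq 0$ for every $y\in \pos A$. Since $\pos A$ is a cone containing $0$, scaling shows that the inequality $\iprod{y}{x}\leq 1$ for all $y\in\pos A$ is the same as $\iprod{y}{x}\leq 0$ for all $y\in\pos A$ (if some inner product were positive, scaling $y$ by a large factor violates the $\leq 1$ bound; and $0\leq 1$ is automatic). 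Therefore $S=\polarset{\parenth{\pos A}}$, which is the first claimed identity. This is the routine part.

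Next I would bring in the orthogonal decomposition. Write $L=\lpos A$ and let $P$ be the orthogonal projection onto $L^\perp$. By \Href{Proposition}{prp:basis_and_orth_complement}, $\pos A = L\oplus\pos P(A)$. A vector $x$ satisfies $\iprod{y}{x}\leq 0$ for all $y\in\pos A$ if and only if it does so for all $y$ in the two summands: for $y\in L$, since $L$ is a linear subspace and contains $-y$ as well, the condition $\iprod{y}{x}\leq 0$ for all $y\in L$ forces $x\in L^\perp$; and for $y\in\pos P(A)$, the condition is precisely $x\in\polarset{\parenth{\pos P(A)}}$. Intersecting, $S = L^\perp\cap\polarset{\parenth{\pos P(A)}}$, giving the second identity. (One subtlety worth a sentence: $\polarset{\parenth{\pos P(A)}}$ is computed in $\R^d$; since $\pos P(A)\subset L^\perp$, its polar in $\R^d$ contains the whole of $L$, so the intersection with $L^\perp$ is what cuts it down correctly — this is consistent with the displayed equation.)

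Finally, for full-dimensionality of $S$ within $L^\perp$: by \Href{Proposition}{prp:basis_and_orth_complement}, in the case $\lpos A = L$ the cone $\pos P(A)$ is pointed as a cone in $L^\perp$. Applying \Href{Lemma}{lem:dual_of_pointed_cone} inside the ambient space $L^\perp$ (noting $\pos P(A)$ is closed because $A$ is finite, so $\pos P(A)$ is a finitely generated, hence closed, cone), its polar relative to $L^\perp$ is full-dimensional in $L^\perp$; and that relative polar is exactly $L^\perp\cap\polarset{\parenth{\pos P(A)}}=S$. Hence $S$ is a full-dimensional cone in $L^\perp$.

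\textbf{Anticipated main obstacle.} The genuinely delicate point is the bookkeeping between polars taken in $\R^d$ versus in the subspace $L^\perp$, and making sure $\lpos A$ is really equal to $L$ (not just contained in it) so that \Href{Lemma}{lem:dual_of_pointed_cone} applies to a genuinely pointed cone — here $L$ is defined as $\lpos A$, so this is automatic, but one must state it. Everything else is a direct application of the bipolar theorem and the two cited propositions; no nontrivial estimate is involved.
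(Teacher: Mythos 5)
Your proposal is correct and follows essentially the same route as the paper: the first identity from the definition of the polar (using that $\pos A$ is a cone), the second from $\pos A = \lpos A \oplus \pos P(A)$ via \Href{Proposition}{prp:basis_and_orth_complement} together with the behaviour of polars under this decomposition (the paper cites a standard reference where you verify it directly), and full-dimensionality from pointedness of $\pos P(A)$ in $L^\perp$ via \Href{Lemma}{lem:dual_of_pointed_cone}. Your explicit remarks on closedness of the finitely generated cone and on polars taken in $\R^d$ versus in $L^\perp$ are sensible details the paper leaves implicit.
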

\begin{proof}
The leftmost identity $S = \polarset{\parenth{\pos A}}$ immediately follows from the definition of the polar set. 

By \Href{Proposition}{prp:basis_and_orth_complement}, 
$\pos A = \lpos A \oplus \pos P(A),$ and $\pos P(A)$ is pointed within $\R^d,$ and consequently, within $L^\perp$. Thus, by the standard properties of the polar transform ( see, for example, \cite[Theorem 1.6.9]{schneider2014convex}),
\[
\polarset{\parenth{\pos A}} = \polarset{\parenth{\lpos A \oplus \pos P(A)}} = L^\perp \cap \polarset{\parenth{\pos P(A)}}.
\] 
Since the cone $\pos P(A)$ is a pointed cone within $L^\perp,$ 
\Href{Lemma}{lem:dual_of_pointed_cone} implies that 
the cone $L^\perp \cap \polarset{\parenth{\pos P(A)}}$ is full-dimensional within 
$L^\perp.$ 
\end{proof}
\begin{proof}[Proof of \Href{Theorem}{thm:colorful_linear_system_to_big}]
By \Href{Lemma}{lem:sol_set_via_polar_of_cone},  the system $\iprod{a}{x} \leq 0, a \in \mathcal{A}_i$ has at least $k$ linearly independent solutions if and  only if 
the dimension of $\parenth{\lpos \mathcal{A}_i}^\perp$ is at least $k.$ Or equivalently, the dimension of $\lpos \mathcal{A}_i$ is at most $d-k.$  Similarly, 
for a rainbow sub-selection $R $  
from the sets  $\mathcal{A}_1, \dots, \mathcal{A}_{d+ (d-k)+1},$  the system $\iprod{a}{x} \leq 0,\; a\in R$ has at least $k$ linearly independent solutions if and only if the dimension of $\lpos R$ is at most $d-k.$ 
The desired implication follows from \Href{Theorem}{thm:colorful_Caratheodory_for_lineality_subspace}.
\end{proof}

\section{Proof of \Href{Theorem}{thm:coloful_Reay_decomposition}}
\label{sec:coloful_Reay_decomposition}
We start by reducing the problem to an easier one. 
\begin{thm}
\label{thm:coloful_Reay_decomposition_weak}
Fix $k \in [d].$ Let $d+ k$ sets $S_1, \dots, S_{d+k} \subset \R^d$ be such that
for every $i \in [d+k],$ 
$\pos S_i $ contains a $k$-dimensional linear subspace. 
Then there is a rainbow sub-selection $R_1 \cup \dots \cup R_m$ from the sets such that 
\begin{enumerate}
\item $\card{R_i} \geq \card{R_{i+1}} \geq 2$ for all $i \in [m-1]$.
\item  For every $i \in [m],$ $\pos\! \braces{R_1 \cup \cdots \cup R_i}$ is a linear subspace of $\Red$ of dimension  
$ \sum\limits_{j \in [i]}\card{R_j}- i.$ 
\item   $ \pos\!\! \braces{R_1 \cup \cdots \cup R_m}$ contains a $k$-dimensional linear subspace. 
\end{enumerate} 
\end{thm}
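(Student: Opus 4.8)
The plan is to split the statement into a color-free part and a genuinely colorful part: reduce the decomposition itself to its known monochromatic version, and isolate the colorful content as a Carath\'eodory-type existence statement about lineality spaces.

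\emph{Reduction to an existence statement.} I claim it suffices to produce a single rainbow sub-selection $\mathcal{R}$ from $S_1,\dots,S_{d+k}$ with $\dim\lpos{\mathcal{R}}\ge k$. Given such an $\mathcal{R}$, \Href{Proposition}{prp:basis_of_lineality_subspace} lets us pass to a positive basis $\mathcal{R}'\subseteq\mathcal{R}$ of the subspace $\lpos{\mathcal{R}}$; this $\mathcal{R}'$ is still a rainbow sub-selection (hence of size at most $d+k$) and it is a positive basis of a space of dimension $\ge k$. Applying to $\mathcal{R}'$ the monochromatic version of \Href{Theorem}{thm:coloful_Reay_decomposition}, i.e.\ Reay's decomposition theorem \cite{reay1965generalizations}, yields a partition $\mathcal{R}'=R_1\cup\dots\cup R_m$ with properties (1) and (2), while (3) holds because $\pos{\mathcal{R}'}=\lpos{\mathcal{R}}$ has dimension $\ge k$. (If one prefers a self-contained treatment, Reay's decomposition can be reproved here by repeatedly peeling off, via \Href{Proposition}{prp:basis_contains_minimal_subbasis}, a minimal positive basis of a \emph{largest-dimensional} minimal positively spanned subspace and then passing to the quotient; the non-increasing block sizes follow because a positive basis of $L$ realizes $L$ as a direct sum of the carriers of its top-level minimal pieces, so quotienting by a largest carrier cannot enlarge the others.) Everything thus reduces to the colorful existence of $\mathcal{R}$, which carries no monotonicity requirement.

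\emph{Building $\mathcal{R}$.} I would construct $\mathcal{R}$ in stages, keeping a rainbow sub-selection whose positive hull has lineality some subspace $V$, and stopping as soon as $\dim V\ge k$. If $\dim V<k$, let $P$ be the orthogonal projection onto $V^{\perp}$. For every color $S$ not yet used, $\pos{P(S)}=P\!\parenth{\pos{S}}$ contains the image of the $k$-dimensional subspace lying in $\pos{S}$, hence a subspace of dimension $\ge k-\dim V\ge 1$, so $\pos{P(S)}$ is not pointed. The stage step is to choose a small family of fresh colors and one vector $r_j$ from each so that the projections $P(r_1),\dots,P(r_t)$ form a minimal positive basis of some non-trivial subspace $W\subseteq V^{\perp}$. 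Adjoining $r_1,\dots,r_t$ to $\mathcal{R}$ then forces its lineality to grow: the cone $\pos{\parenth{V\cup\braces{r_1,\dots,r_t}}}$ is contained in the new positive hull, and since $V=\ker P$ lies in that cone while the cone projects onto the subspace $W$, one checks immediately that the cone is stable under negation, so it is a subspace of dimension $\dim V+\dim W>\dim V$. Hence $\dim V$ strictly increases at each stage and reaches $k$ in at most $k$ stages.

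\emph{The main obstacle.} The whole difficulty is concentrated in carrying out the stage step within budget: extracting, from the \emph{fresh} colors only, a rainbow family whose projections minimally positively span a non-trivial subspace, while ensuring that the colors consumed over all stages never exceed $d+k$. This is a colorful Carath\'eodory phenomenon for positive bases, and the bound $d+k$ is sharp --- it is strictly larger than the at most $2k$ vectors a positive basis of a $k$-dimensional space occupies --- since \Href{Lemma}{lem:coloful_Reay_decomposition_bound} shows $d+k-1$ colors do not suffice. A single fresh vector generally cannot help (the cone spanned so far is pointed in $V^{\perp}$ and may miss the subspaces available from the remaining projected colors), so one is forced to add several vectors simultaneously, and controlling their cumulative number is exactly the crux. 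I expect the argument to proceed by an exchange/pivoting procedure in the quotient spaces $\R^{d}/V$, in the spirit of B\'ar\'any's proof of the Colorful Carath\'eodory theorem (\Href{Proposition}{prp:colorful_Caratheodory}): maintain a candidate rainbow sub-selection, and whenever the lineality of its positive hull still has dimension below $k$, use separation in the orthogonal complement of that lineality, together with the structure of positive bases, to replace one representative by a better one and strictly raise the lineality dimension --- the delicate point being to guarantee termination while never overspending the budget of $d+k$ colors. That accounting is where I expect essentially all of the length and technicality of the proof to lie.
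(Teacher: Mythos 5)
Your reduction is sound, and it in fact mirrors how the paper itself passes from the weak statement to the full decomposition: once a rainbow sub-selection $\mathcal{R}$ with $\dim \lpos{\mathcal{R}} \geq k$ is found, extracting a positive basis of $\lpos{\mathcal{R}}$ via \Href{Proposition}{prp:basis_of_lineality_subspace} and invoking the monochromatic Reay decomposition gives (1)--(3); this legitimately sidesteps the paper's machinery (maximal-cardinality choices plus Reay's Lemma 2.5) for producing non-increasing block sizes during the colorful construction. The problem is that everything you have actually proved stops there: the colorful existence statement, which is the entire content of the theorem, is left open. Your staged construction is exactly the paper's inductive construction (project the fresh colors onto $V^{\perp}$, adjoin a rainbow family whose projections form a minimal positive basis of a non-trivial subspace), but you establish neither of its two pillars. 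First, the stage-step existence: that from $n+1$ colors in an $n$-dimensional space, each with non-trivial lineality, one can extract a rainbow sub-selection which is a \emph{minimal} positive basis of a minimal subspace. In the paper this is \Href{Lemma}{lem:coloful_pos_bases_helly_k=1} and \Href{Corollary}{cor:existence_rainbow_basis}: since each color contains a positive basis of its lineality space, $0$ lies in each convex hull, the Colorful Carath\'eodory theorem (\Href{Proposition}{prp:colorful_Caratheodory}) gives a rainbow selection with $0$ in its convex hull, the face whose relative interior contains $0$ gives non-trivial lineality, and Davis's theorem (\Href{Proposition}{prp:basis_contains_minimal_subbasis}) yields minimality; no exchange/pivoting argument ``in the spirit of B\'ar\'any's proof'' is needed. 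Second, and more importantly, the budget accounting, which you explicitly defer as the expected bulk of the proof; as written, the construction could stall at some stage with fewer unused colors than the colorful step requires, so the proposal is incomplete precisely at the theorem's core.

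For the record, the missing count is short and hinges exactly on the minimality you (correctly) impose on each stage: a stage that raises the lineality dimension by $\delta_j$ consumes exactly $\delta_j + 1$ colors, so after $s$ completed stages the colors spent number $\dim V + s$, and since each stage gains at least one dimension, $s \leq \dim V \leq k-1$ whenever the process has not yet terminated. Hence the fresh colors number $d + k - (\dim V + s) = (d - \dim V) + (k - s) \geq \dim V^{\perp} + 1$, which is precisely the number needed to run the colorful-Carath\'eodory step inside $V^{\perp}$; this is \Href{Lemma}{lem:existence_of_the_rainbow_next_set} in the paper. Without proving the stage-step lemma and carrying out this count, your write-up reduces the theorem to its hardest part and stops.
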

The only difference between \Href{Theorem}{thm:coloful_Reay_decomposition} and 
\Href{Theorem}{thm:coloful_Reay_decomposition_weak} is that in the latter  we do not state that $\bigcup\limits_{j \in [i]} R_j$ is a positive basis of 
$\pos\! \braces{R_1 \cup \cdots \cup R_i},$ $i \in [m].$ However, it is easy to see that \Href{Theorem}{thm:coloful_Reay_decomposition_weak} implies \Href{Theorem}{thm:coloful_Reay_decomposition}.

\begin{proof}[Proof of \Href{Theorem}{thm:coloful_Reay_decomposition}]
By \Href{Theorem}{thm:coloful_Reay_decomposition_weak}, there is  a rainbow sub-selection $R^\prime $ from  $S_1, \dots,  S_{d+k}$ such that
$\pos R^\prime$ is a linear subspace and $\dim \pos R^\prime \geq k.$ By \Href{Proposition}{prp:basis_of_lineality_subspace},  $R^\prime$ contains a positive
basis, say $R,$ of $\pos R^\prime.$ Applying   \Href{Theorem}{thm:coloful_Reay_decomposition_weak} to $d + \dim \pos R^\prime$ copies of 
$R,$ 
 we get the desired set system.
\end{proof}

From now on, we proceed with the proof of \Href{Theorem}{thm:coloful_Reay_decomposition_weak}.

Our proof is based on an inductive construction.  First, we  show how to choose the first rainbow set $R_1,$ then we will show how to find a next rainbow set $R_m$ provided that the dimension of 
$\lpos \!\braces{R_1 \cup \dots \cup R_{m-1}}$ is strictly less than $k.$ Finally, we will show that the process terminates after a finite number of steps and the constructed
sets satisfy the desired properties. 
\subsection{Choice of the first rainbow set}
\label{sub:first_rainbow_set}
We start with the simplest case of $k=1.$

\begin{lem}\label{lem:coloful_pos_bases_helly_k=1}
Assume $L$ is an $n$-dimensional vector space.
Assume that for every $i \in [n+1],$ a finite set $B_i$ 
is a positive basis of a non-trivial subspace $H_i$ of $L.$
Then there is a rainbow  selection $R$ from the sets $B_1, \dots, B_{n+1}$ 
such that  $ \lpos R $ is a non-trivial subspace of $L.$
\end{lem}
\begin{proof}
Fix $i \in [n+1].$ Since $B_i$ is a positive basis of $H_i,$ $0 \in \conv{B_i}.$ Thus, by the colorful Carath\'eodory theorem  -- \Href{Proposition}{prp:colorful_Caratheodory}, 
there is a rainbow selection $R$ such that 
$0 \in \conv{R}.$ Since the original sets are positive bases in the corresponding subspaces, they do not contain the origin. Thus, the origin belongs to the relative interior of a face $F$ of $\conv{R}$  satisfying $\dim F \geq 1.$ It means that
$ \lpos R \supset \Lin{F}.$ The lemma follows. 
\end{proof}

Using \Href{Proposition}{prp:basis_of_lineality_subspace}, \Href{Proposition}{prp:basis_contains_minimal_subbasis}, and 
\Href{Lemma}{lem:coloful_pos_bases_helly_k=1} together, we get
\begin{cor}\label{cor:existence_rainbow_basis}
Assume $L$ is an $n$-dimensional vector space.
Assume that for every $i \in [n+1],$ a set $S_i$ satisfies 
$\dim  \lpos{S_i} \geq 1.$
Then there is a non-empty rainbow  sub-selection $R$ from the sets  $S_1, \dots, S_{n+1}$  such that  $R$ is a minimal basis of the minimal  subspace $\pos R.$ 
\end{cor}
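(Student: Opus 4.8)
The plan is to chain the three cited results in the natural order, taking care at each stage that the set produced is still a rainbow (sub-)selection from the original colors $S_1,\dots,S_{n+1}$ and that it still positively spans a non-trivial subspace.

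First I would replace the $S_i$ by honest positive bases. Since each $\lpos S_i$ is non-trivial, \Href{Proposition}{prp:basis_of_lineality_subspace} provides a subset $B_i \subseteq S_i$ that is a (necessarily finite) positive basis of the non-trivial subspace $H_i := \lpos S_i$. This puts us exactly in the hypothesis of \Href{Lemma}{lem:coloful_pos_bases_helly_k=1}, and applying it yields a rainbow selection $R'$ from $B_1,\dots,B_{n+1}$ --- that is, one vector $b_i \in B_i$ for each $i \in [n+1]$ --- with $\lpos R'$ non-trivial. Since $b_i \in B_i \subseteq S_i$, the set $R'$ is simultaneously a rainbow selection from $S_1,\dots,S_{n+1}$.

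Next I would carve a minimal positive basis out of $R'$. As $\lpos R'$ is non-trivial and $R'$ is finite, \Href{Proposition}{prp:basis_of_lineality_subspace} applied to $R'$ gives $B \subseteq R'$ that is a positive basis of $\lpos R'$; then \Href{Proposition}{prp:basis_contains_minimal_subbasis}, applied to this positive basis $B$ of the finite-dimensional space $\lpos R'$, yields $R \subseteq B$ that is the minimal positive basis of some minimal linear subspace of $\lpos R'$ --- and by the definition of minimality that subspace is exactly $\pos R$. Finally, the inclusions $R \subseteq B \subseteq R'$ show that $R$ is a rainbow sub-selection from $S_1,\dots,S_{n+1}$ (it meets each color at most once because $R'$ does), and $R$ is non-empty because a minimal positive basis of a non-trivial subspace has at least two vectors. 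This is precisely the conclusion of the corollary.

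The proof is essentially bookkeeping, so I do not expect a genuine obstacle; the only point deserving a line of justification is that the rainbow property survives the successive shrinkings (from the $B_i$ to the single vectors $b_i$, then to $B$, then to $R$), which is immediate once one records that $R \subseteq R' \subseteq \bigcup_{i} S_i$ with $R'$ using each $S_i$ at most once.
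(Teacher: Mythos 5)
Your proposal is correct and follows essentially the same route as the paper, which obtains the corollary precisely by chaining \Href{Proposition}{prp:basis_of_lineality_subspace}, \Href{Lemma}{lem:coloful_pos_bases_helly_k=1}, and \Href{Proposition}{prp:basis_contains_minimal_subbasis} (the paper states this combination without spelling out the bookkeeping you supply). Your attention to the rainbow property being preserved under the successive shrinkings is exactly the only detail that needed recording.
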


By \Href{Corollary}{cor:existence_rainbow_basis}, there is a non-empty subset $I$ of $[d+k]$  and a rainbow sub-selection $R$ from $\bigcup\limits_{i \in I} S_i$ such that  $R$ is a minimal positive basis of the minimal  subspace $\pos R.$ 
Let $I_1 \subset [d+k]$   be an index set of maximal cardinality for which there is   a rainbow sub-selection $R_1$  from the union  $\bigcup\limits_{i \in I_1} S_i$ such that $R_1$ is a minimal positive basis of the minimal subspace $\pos{R_1}.$ 

Note that automatically: 
\begin{equation}\label{eq:first_step}
\card{R_1} \geq 2,\ \pos R_1 = \lpos R_1, \quad \text{and} \quad 
\dim \pos R_1 = \card{R_1} -1.
\end{equation}


\subsection{Inductive construction}
\label{subsec:next_rainbow_set}
Assume that for some $m^\prime > 1,$ the sets $R_1, \dots, R_{m^\prime-1}$ have already been chosen.

If the dimension of $\lpos{\!\braces{R_1 \cup \dots \cup R_{m^\prime-1}}}$ is at least $k,$ we terminate the construction process.

 If the dimension of $\lpos{\!\braces{R_1 \cup \dots \cup R_{m^\prime-1}}}$ is strictly less than $k,$ we proceed as follows.

Denote $\lpos{\!\!\braces{R_1 \cup \dots \cup R_{m^\prime-1}}}$ by $L_{m^\prime-1}.$
We use $C_{m^\prime-1}$ to denote the orthogonal complement of $L_{m^\prime-1},$ and $P_{m^\prime-1}$ to denote the orthogonal projection onto $C_{m^\prime-1}.$ 
We set 
$I_{m^\prime} \subset [k+d] \setminus 
\parenth{I_1 \cup \dots \cup I_{m^\prime-1}}$ 
to be a non-empty index set of maximal cardinality such that  there is  a rainbow sub-selection $R_{m^\prime}$ from the sets
  $S_i, i \in I_{m^\prime},$
  such that
$P_{m^\prime-1} \ \!\!\! \parenth{R_{m^\prime}}$ is a minimal positive basis  of the minimal linear subspace $\pos{P_{m^\prime-1}\ \!\!\! \parenth{R_{m^\prime}}}.$ 

\subsection{Correctness of the inductive construction}
We divide the proof into several simple steps.

Now we show that it is possible to add a new rainbow set if the dimension of the lineality space of the union of already chosen ones is strictly less than $k.$

\begin{lem}\label{lem:existence_of_the_rainbow_next_set}
Assume that for some $m^\prime > 1,$ the sets $R_1, \dots, R_{m^\prime-1}$ have
already been  chosen in such a way that
\begin{enumerate}
\item 
 $ \dim \lpos{\!\!\braces{R_1 \cup \dots \cup R_{m^\prime-1}}} < k.$
\item $\card{R_i} \geq  2$ for each $i \in [m^\prime-1]$.
\item  For every $i \in [m^\prime-1],$ $\dim \lpos\! \!\parenth{R_1 \cup \cdots \cup R_i} =  \sum\limits_{j \in [i]}\card{R_j}- i.$ 
\end{enumerate}
 Then there is an index set $I \subset [k+d] \setminus 
\parenth{I_1 \cup \dots \cup I_{m^\prime-1}}$ and a rainbow sub-selection 
 $R$ from the sets $S_i, i \in I,$ such that
$P_{m^\prime-1} \ \!\!\! \parenth{R}$ is a minimal positive basis  of the minimal linear subspace $\pos{P_{m^\prime-1}\ \!\!\! \parenth{R}}.$
\end{lem}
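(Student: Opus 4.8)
The plan is to reduce this to the $k=1$ case already handled by \Href{Corollary}{cor:existence_rainbow_basis}, applied in the quotient space $C_{m'-1}$. First I would set $L = L_{m'-1} = \lpos\!\braces{R_1 \cup \dots \cup R_{m'-1}}$ and let $t = \dim L$; by hypothesis $t < k \le d$. Also set $C = C_{m'-1} = L^\perp$ and $P = P_{m'-1}$, so $\dim C = d - t$. The indices $I_1, \dots, I_{m'-1}$ used so far satisfy, by \eqref{eq:first_step} and hypothesis (3), $\card{I_1 \cup \dots \cup I_{m'-1}} = \sum_{j \in [m'-1]} \card{R_j} = \dim L + (m'-1) = t + (m'-1)$. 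Since $\card{R_j} \ge 2$ forces $m' - 1 \le t$, we get $\card{I_1 \cup \dots \cup I_{m'-1}} = t + (m'-1) \le 2t \le 2(k-1) < 2k$. The number of colors left over is therefore at least $(d+k) - 2t \ge (d+k) - 2(k-1) = d - k + 2 \ge d - t + 1 = \dim C + 1$ (using $t \le k-1$). So there are at least $\dim C + 1$ colors $S_i$ with $i$ outside $I_1 \cup \dots \cup I_{m'-1}$; call an arbitrary such collection of $\dim C + 1$ of them $S_{i_1}, \dots, S_{i_{\dim C + 1}}$.

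Next I would check that each $P(S_{i_\ell})$ has a nontrivial lineality space in $C$, so that \Href{Corollary}{cor:existence_rainbow_basis} applies to the $\dim C + 1$ sets $P(S_{i_1}), \dots, P(S_{i_{\dim C+1}})$ inside the $\dim C$-dimensional space $C$. By the theorem's hypothesis, $\pos S_{i_\ell}$ contains a $k$-dimensional subspace, i.e. $\dim \lpos S_{i_\ell} \ge k > t = \dim L$. Hence $\lpos S_{i_\ell}$ cannot be contained in $L$, so its image $P(\lpos S_{i_\ell})$ is a nontrivial subspace of $C$; and $P(\lpos S_{i_\ell}) \subset \lpos P(S_{i_\ell})$ because projection is linear and maps $\pos S_{i_\ell}$ onto $\pos P(S_{i_\ell})$, carrying lines through $0$ to lines through $0$. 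Thus $\dim \lpos P(S_{i_\ell}) \ge 1$ for each $\ell$. Now \Href{Corollary}{cor:existence_rainbow_basis} (with $L$ there being $C$, $n = \dim C$) yields a nonempty rainbow sub-selection: a subset $J \subseteq \{i_1, \dots, i_{\dim C+1}\}$ and a choice of one vector $s_i \in S_i$ for each $i \in J$, such that the set $\{P(s_i) : i \in J\}$ is a minimal positive basis of the minimal subspace $\pos\{P(s_i) : i \in J\}$ of $C$. Taking $I = J$ and $R = \{s_i : i \in J\}$ gives exactly the conclusion, since $J \subseteq [k+d] \setminus (I_1 \cup \dots \cup I_{m'-1})$ by construction.

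One subtlety to handle carefully: \Href{Corollary}{cor:existence_rainbow_basis} is stated for sets $S_i$ in an abstract $n$-dimensional space with $\dim \lpos S_i \ge 1$, and the conclusion returns a rainbow \emph{sub}-selection $R$ (at most one element per color) with $R$ a minimal positive basis of the minimal subspace $\pos R$. Applying it to the projected sets $P(S_i)$ is legitimate, but I should note that a minimal positive basis of $\pos P(S_i)$ consisting of distinct vectors $P(s_i)$ lifts to distinct vectors $s_i \in S_i$ (the map $s_i \mapsto P(s_i)$ is injective on the chosen set precisely because the $P(s_i)$ are a positive basis, hence in particular distinct and, being a minimal positive basis, no two are parallel). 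Then $R = \{s_i : i \in J\}$ is a genuine rainbow sub-selection from the original sets and $P(R) = \{P(s_i) : i \in J\}$ is the desired minimal positive basis.

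The main obstacle is the bookkeeping in the first paragraph — verifying that enough colors remain unused, i.e. that $(d+k) - \card{I_1 \cup \dots \cup I_{m'-1}} \ge \dim C + 1$. This rests on the identity $\card{I_1 \cup \dots \cup I_{m'-1}} = \dim L_{m'-1} + (m'-1)$ (each $I_j$ is in bijection with $R_j$, the $I_j$ are disjoint, and hypothesis (3) relates $\sum \card{R_j}$ to $\dim L_{m'-1}$) together with $m' - 1 \le \dim L_{m'-1} \le k-1$ and $\dim C = d - \dim L_{m'-1}$; after substituting, the inequality $(d+k) - (t + (m'-1)) \ge (d-t) + 1$ reduces to $k - (m'-1) \ge 1$, i.e. $m' - 1 \le k - 1$, which holds since $m'-1 \le t \le k-1$. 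Everything else is a routine transfer of \Href{Corollary}{cor:existence_rainbow_basis} through the orthogonal projection $P_{m'-1}$.
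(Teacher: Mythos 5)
Your proposal is correct and takes essentially the same route as the paper's proof: project onto $C_{m^\prime-1}$, verify that each remaining projected color has a non-trivial lineality space (via $\dim\lpos S_i \geq k > \dim L_{m^\prime-1}$), count that at least $\dim C_{m^\prime-1}+1$ unused colors remain (the same arithmetic, reducing to $m^\prime-1 \leq k-1$), apply \Href{Corollary}{cor:existence_rainbow_basis} inside $C_{m^\prime-1}$, and lift the chosen projected vectors back to elements of the original sets. Your extra remark about the lift being a genuine rainbow sub-selection is a minor point the paper leaves implicit; otherwise the two arguments coincide.
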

\begin{proof}
For any $i,$ the dimension of the space $\lpos{P_{m^\prime-1}(\lpos S_i)}$ is at least
$\dim \lpos S_i - \dim \lpos{\!\!\braces{R_1 \cup \dots \cup R_{m^\prime-1}}} \geq 1.$
That  is, $\lpos{P_{m^\prime-1}(\lpos S_i)}$ is a non-trivial subspace of
$C_{m^\prime-1}.$ 
On the other hand, the cardinality of the set $[k+d] \setminus 
\parenth{I_1 \cup \dots \cup I_{m^\prime-1}}$ equals 
\[
d+k - \sum\limits_{j \in [m^\prime -1]}\card{R_j} = d - \parenth{\sum\limits_{j \in [m^\prime -1]}\card{R_j} - (m^\prime -1)} + (k - (m^\prime -1)).
\]
By our assumption, 
  the cardinality of the set $[k+d] \setminus 
\parenth{I_1 \cup \dots \cup I_{m^\prime-1}}$ equals
\begin{equation}
\label{eq:card_remaining_index_set}
 d - \dim \lpos{\!\!\braces{R_1 \cup \dots \cup R_{m^\prime-1}}} + (k - (m^\prime -1)).
\end{equation}

Since $\card{R_i} \geq 2$ for all $i \in [m^\prime-1],$ we get  
\[k > \dim \lpos{\!\!\braces{R_1 \cup \dots \cup R_{m^\prime-1}}} = 
\sum\limits_{j \in [m^\prime -1]}\card{R_j} - (m^\prime -1) \geq  m^\prime -1.
\]
Hence, 
$k - (m^\prime -1) \geq 1.$ However, 
 $ \dim C_{m^\prime-1} = d - \dim\lpos{\!\!\braces{R_1 \cup \dots \cup R_{m^\prime-1}}}.$ 
Thus, using \eqref{eq:card_remaining_index_set} in \Href{Corollary}{cor:existence_rainbow_basis}, we conclude that there is a non-empty subset $I$ of $[k+d] \setminus 
\parenth{I_1 \cup \dots \cup I_{m^\prime-1}}$  and a rainbow sub-selection  $R^\prime$ from the sets $P_{m^\prime-1} (S_i), i \in I,$  such that 
$R^\prime$ is a minimal positive basis  of the minimal linear subspace 
$\lpos{R^\prime}.$ There are $s_i \in S_i,$ $i \in I$ such that 
$R^\prime =  \bigcup\limits_{i \in I} P_{m^\prime-1} (s_i).$ We complete the proof by taking $R  =  \bigcup\limits_{i \in I} \braces{s_i}.$
\end{proof}

\begin{cor}\label{cor:existence_of_the_next_rainbow_set}
Assume that for some $m^\prime > 1,$ the sets $R_1, \dots, R_{m^\prime-1}$ have  already been chosen by our inductive construction in such a way that
 $\lpos{\!\!\braces{R_1 \cup \dots \cup R_{m^\prime-1}}} < k.$ Then there are
 $I_{m^\prime} \subset [k+d] \setminus 
\parenth{I_1 \cup \dots \cup I_{m^\prime-1}} $ of maximal cardinality and a rainbow sub-selection  $R_{m^\prime}$ from $ S_i, i \in I_{m^\prime},$ 
such that $P_{m^\prime-1} \ \!\!\! \parenth{R_{m^\prime}}$ is a minimal positive basis  of the minimal linear subspace $\pos{P_{m^\prime-1}\ \!\!\! \parenth{R_{m^\prime}}}.$ 
\end{cor}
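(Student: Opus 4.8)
The idea is to check that $R_1, \dots, R_{m'-1}$ satisfy the three hypotheses of \Href{Lemma}{lem:existence_of_the_rainbow_next_set}, to invoke that lemma to produce \emph{some} admissible index set, and then to replace it by one of maximal cardinality. So the plan has three parts: verify the hypotheses (the only substantive part being the dimension identity), apply the lemma, and run a trivial ``finitely many finite sets'' argument to get maximality.

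Hypothesis~(1) of \Href{Lemma}{lem:existence_of_the_rainbow_next_set} is exactly the standing assumption $\dim \lpos{(R_1 \cup \dots \cup R_{m'-1})} < k$. Hypothesis~(2), that $\card{R_i} \geq 2$ for $i \in [m'-1]$, is immediate: $\card{R_1} \geq 2$ by \eqref{eq:first_step}, and for $i \geq 2$ the set $P_{i-1}(R_i)$ is by construction a minimal positive basis of a non-trivial subspace, hence has at least two elements, while $\card{R_i} \geq \card{P_{i-1}(R_i)}$. Hypothesis~(3), the identity $\dim \lpos{(R_1 \cup \dots \cup R_i)} = \sum_{j \in [i]} \card{R_j} - i$ for all $i \leq m'-1$, I would prove by induction on $i$: the case $i = 1$ is \eqref{eq:first_step}; assuming it for $i-1$, the subspace $L_{i-1} = \lpos{(R_1 \cup \dots \cup R_{i-1})}$ contains $R_1 \cup \dots \cup R_{i-1}$ and has dimension $\sum_{j \in [i-1]} \card{R_j} - (i-1)$, so $P_{i-1}$ annihilates $R_1 \cup \dots \cup R_{i-1}$ and \Href{Proposition}{prp:basis_and_orth_complement} gives that $\pos{(R_1 \cup \dots \cup R_i)} = L_{i-1} \oplus \pos{P_{i-1}(R_i)}$ is again a linear subspace; since $P_{i-1}(R_i)$ is a minimal positive basis of the \emph{minimal} subspace it spans and (as in the proof of \Href{Lemma}{lem:existence_of_the_rainbow_next_set}) $P_{i-1}$ is injective on $R_i$, we obtain $\dim \pos{P_{i-1}(R_i)} = \card{P_{i-1}(R_i)} - 1 = \card{R_i} - 1$, and adding the two dimensions completes the step.

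Once hypotheses~(1)--(3) are in place, \Href{Lemma}{lem:existence_of_the_rainbow_next_set} yields a nonempty $I \subset [k+d] \setminus (I_1 \cup \dots \cup I_{m'-1})$ and a rainbow sub-selection $R$ from $S_i$, $i \in I$, such that $P_{m'-1}(R)$ is a minimal positive basis of the minimal subspace $\pos{P_{m'-1}(R)}$. The family of all index sets $I \subset [k+d] \setminus (I_1 \cup \dots \cup I_{m'-1})$ admitting such a rainbow sub-selection is then nonempty and consists of subsets of a fixed finite set, so it has a member $I_{m'}$ of maximal cardinality, together with an associated $R_{m'}$; this is precisely the assertion. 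I expect the only nonroutine point to be the induction for hypothesis~(3): one has to keep track not just of dimensions but also of the fact that the partial positive hulls remain linear subspaces and that $P_{i-1}$ does not collapse distinct elements of $R_i$, so that cardinalities (and not merely dimensions) add up exactly. The passage to a maximal index set and the verifications of~(1) and~(2) are immediate.
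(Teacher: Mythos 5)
Your proposal is correct and takes essentially the same route as the paper: the corollary is obtained by feeding the hypotheses (guaranteed by the inductive construction) into \Href{Lemma}{lem:existence_of_the_rainbow_next_set} and then passing to an admissible index set of maximal cardinality inside the finite set $[k+d]$. The only cosmetic difference is that you re-derive hypothesis (3) by a direct induction via \Href{Proposition}{prp:basis_and_orth_complement}, whereas the paper supplies it from \eqref{eq:first_step} and \Href{Lemma}{lem:properties_of_the_rainbow_next_set} within the joint induction of the proof of \Href{Theorem}{thm:coloful_Reay_decomposition_weak}; your dimension computation is the same direct-sum argument that closes the proof of that lemma.
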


Now let us show that the other desired properties of $R_{m^\prime}$ hold.

We will need the following lemma, which is essentially Lemma 2.5 of \cite{reay1965generalizations}.

\begin{lem}\label{lem:Reay_property_of_iteration_step}
 Let $S $ be  a  subset of a  linear space. Assume 
\begin{enumerate}
\item There is  a subset  $R$  of $S$ of maximal cardinality 
 such that $R$ is a minimal positive basis of the minimal  subspace $\pos{R}.$
\item There is  a subset  $Q$  of $S \setminus R$ of maximal cardinality 
 such that  $P\ \!\!\! \parenth{Q}$ is a minimal positive basis of the minimal subspace $\pos{P\!\parenth{Q}},$ where $P$ denotes
the orthogonal projection onto the orthogonal complement of $\pos{R}.$
\end{enumerate} 
Then
$\card{Q} \leq \card{R},$ 
$\pos\!\!\braces{R \cup Q} = \lpos\!\!\braces{R \cup Q} = \pos\!\!\braces{R \cup P(Q)} ,$
and   $\dim \pos\!\!\braces{R \cup Q} = \card{Q} +  \card{R} - 2.$
\end{lem}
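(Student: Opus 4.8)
The plan is to unwind the two maximality hypotheses into concrete dimension counts and then verify each conclusion in turn.

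\textbf{Setup and notation.} Write $L = \pos R = \lpos R$ (the equality holds by \eqref{eq:first_step}-type reasoning: since $R$ is a minimal positive basis of the minimal subspace $\pos R$, that positive hull is a linear subspace, so it equals its own lineality space), and let $P$ be the orthogonal projection onto $L^\perp$. By the definition of a minimal positive basis of a minimal subspace, $\dim L = \card R - 1$. Similarly, since $P(Q)$ is a minimal positive basis of the minimal subspace $\pos P(Q) \subset L^\perp$, that positive hull is a linear subspace $M$ with $\dim M = \card{P(Q)} - 1$, where $\card{P(Q)}$ denotes the number of distinct points in $P(Q)$ (a priori $P$ could identify points of $Q$, but I will argue it does not).

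\textbf{First I would handle $\card Q \le \card{P(Q)}$ and injectivity of $P$ on $Q$.} This is really the heart of the matter and is exactly where Reay's Lemma~2.5 is invoked. Suppose $P$ is not injective on $Q$, i.e. there are distinct $q, q' \in Q$ with $P(q) = P(q')$, equivalently $q - q' \in L$. Then I claim $R \cup \{q,q'\}$ (or rather a suitable subset of it together with $q,q'$) contradicts the maximality in hypothesis (1): the set $R$ together with the pair $q,q'$ positively spans, modulo $L$, nothing new — so one should be able to enlarge the index set. More carefully, the right statement to extract from Reay is: among $R \cup Q$ one can find a minimal positive basis of a minimal subspace that uses strictly more than $\card R$ elements whenever $P$ fails to be injective on $Q$ or whenever one of the $P(q)$ already lies in $\pos$ of the others in a degenerate way; this contradicts $R$ being of \emph{maximal} cardinality with the minimality property. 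Hence $P|_Q$ is injective, giving $\card{P(Q)} = \card Q$, and then $\card Q = \card{P(Q)} \le \dim L^\perp + 1$; but to get the sharper $\card Q \le \card P$ — wait, here $\card P$ must be a typo or shorthand in the statement; reading the later use, I believe $\card P$ should be $\card{P(R)}$ or, more likely, the statement intends $\card R$ in place of one of these. I would reconcile this by checking the downstream application: the conclusion used later is $\dim\pos\{R\cup Q\} = \card R + \card Q - 2$, so the clean bookkeeping is $\card Q \le \card R$ is \emph{not} what is needed; rather $\dim\pos\{R\cup Q\} = (\card R - 1) + (\card Q - 1)$, i.e. $L$ and $M$ are complementary and $\dim(L\oplus M) = \dim L + \dim M$. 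I would therefore prove: (a) $P|_Q$ injective (Reay), (b) $\pos\{R\cup Q\} = L \oplus M$, (c) this is a linear subspace equal to its lineality space.

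\textbf{Then the additivity and the lineality identity.} Given $R$ a positive basis of the subspace $L$ and $P(Q)$ a positive basis of the subspace $M \subset L^\perp$, \Href{Proposition}{prp:basis_and_orth_complement} (applied with the subspace $L$, whose orthogonal complement contains $M$) gives $\pos(R \cup P(Q)) = L \oplus \pos P(Q) = L \oplus M$, which is a linear subspace; hence $\pos\{R \cup P(Q)\} = \lpos\{R\cup P(Q)\}$. To pass from $R \cup P(Q)$ to $R \cup Q$: each $q \in Q$ differs from $P(q)$ by the element $q - P(q) \in L = \pos R$ (using that $L$ is a subspace, so both $\pm(q-P(q)) \in \pos R$); therefore $q \in \pos(R \cup P(Q))$ and conversely $P(q) \in \pos(R \cup \{q\})$, so $\pos\{R\cup Q\} = \pos\{R \cup P(Q)\} = L\oplus M$, and this common value is a linear subspace, hence equals its own lineality space. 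Finally $\dim\pos\{R\cup Q\} = \dim L + \dim M = (\card R - 1) + (\card{P(Q)} - 1) = \card R + \card Q - 2$ by injectivity of $P|_Q$; and $\card Q = \card{P(Q)} \le \dim M + 1 \le \dim L^\perp$ (since $M \subsetneq L^\perp$ is possible but in any case $\card{P(Q)} \le \dim L^\perp + 1$), which is the content of the first inequality $\card Q \le \card P$ once one reads $\card P$ as $\card R$...

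\textbf{Main obstacle.} The genuine difficulty is step (a): showing $P$ is injective on $Q$, or more precisely showing that the two maximality hypotheses are not in tension, i.e. that no element of $Q$ "wastes" a dimension by collapsing under $P$. I expect to need the full strength of Reay's Lemma~2.5 here, rephrased as: if $R$ is a maximal-cardinality subset of $S$ forming a minimal positive basis of a minimal subspace, then for any further subset $T \subseteq S \setminus R$ whose image $P(T)$ is a minimal positive basis of a minimal subspace of $L^\perp$, necessarily $\card{P(T)} = \card T$ (no collapsing) — because a collapse would let us replace $R$ by a strictly larger such set, using the collapsed pair to extend $R$ within $L$, contradicting maximality. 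I would carefully state and cite this, since everything else is then routine linear algebra via \Href{Proposition}{prp:basis_and_orth_complement}. A secondary, purely notational obstacle is pinning down what $\card P$ means in the theorem statement; I would resolve it against the intended use in \Href{Lemma}{lem:existence_of_the_rainbow_next_set} and the final assembly, where $\card R$ is the quantity that appears.
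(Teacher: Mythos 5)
Your proposal does not reconstruct the actual content of the paper's proof, and the step you leave open is precisely the one the lemma exists to deliver. First, the quantity $\card{P}$ in the statement is indeed a typo for $\card{R}$, but your resolution of it is wrong: you decide that ``$\card{Q}\le\card{R}$ is \emph{not} what is needed'' and in the end only derive $\card{Q}\le\dim\parenth{\pos R}^{\perp}+1$, a bound by the ambient codimension. Downstream, however, \Href{Lemma}{lem:properties_of_the_rainbow_next_set} uses exactly $\card{Q}\le\card{R}$ to get $\card{R_{m'-1}}\ge\card{R_{m'}}$, i.e.\ property (1) of \Href{Theorem}{thm:coloful_Reay_decomposition}, and the introduction stresses that this monotonicity of the sizes is the feature one cannot get from the Colorful Carath\'eodory theorem alone. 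The paper proves it as follows: take the unique positive combination $\sum_j\beta_j P(q_j)=0$ furnished by the minimal positive basis $P(Q)$; since $\ker P=\pos R$ (a subspace), $v:=\sum_j\beta_j q_j\in\pos R$. If $v=0$, then $0$ lies in the relative interior of $\conv Q$, so $Q$ itself is a minimal positive basis of a minimal subspace inside $S$, and maximality of $R$ gives $\card{R}\ge\card{Q}$. If $v\ne 0$, write $-v=\sum_{i\in I'}\alpha_i r_i$ with $I'\subset I$ of minimal cardinality and $\alpha_i>0$; then $0$ is in the relative interior of $\conv\parenth{R'\cup Q}$, a dimension count shows $R'\cup Q$ is a minimal positive basis of a minimal subspace, and maximality of $R$ gives $\card{R}\ge\card{R'}+\card{Q}>\card{Q}$. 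None of this appears in your proposal.

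Second, the part you do flag as ``the heart of the matter'' --- injectivity of $P$ on $Q$ --- is left as a hand-wave whose proposed mechanism does not work: from $q-q'\in\pos R$ you get a \emph{linear} relation, not a positive dependence, so there is no evident way to ``use the collapsed pair to extend $R$ within $L$'' and enlarge a minimal positive basis. Worse, you propose to close this step by citing Reay's Lemma~2.5, but the present lemma \emph{is} essentially that lemma (the paper says so and then proves it); citing it is circular. Your concern itself is legitimate --- the paper dismisses it with ``Clearly, $\card{P(Q)}=\card{Q}$,'' and under a literal set-reading of $P(Q)$ one has to run the pull-back argument above on a set of representatives to control repeated projections --- but raising the issue and deferring it to a citation is not a proof. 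The portions you do carry out correctly (that $\pos\braces{R\cup Q}=\pos\braces{R\cup P(Q)}=\pos R\oplus\pos P(Q)$ is a linear subspace, via \Href{Proposition}{prp:basis_and_orth_complement} and $q-P(q)\in\pos R$, plus the resulting dimension count) coincide with the paper's final paragraph, but they are the routine part; as it stands the proposal has a genuine gap at both the inequality $\card{Q}\le\card{R}$ and the justification that no cardinality is lost under $P$.
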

\begin{proof}
Note that $R$ and $Q$ are disjoint since $P(R) = 0$ and $P(Q)$ is a positive basis of a subspace of the image of $P.$  Denote $I = \left[ \card{R} \right]$ and 
$J = \left[ \card{Q} \right].$ Let $R = \{r_1, \dots, r_{\card{R}}\}$ and 
$Q = \{q_1, \dots, q_{\card{Q}}\}.$
Since  $P\ \!\!\! \parenth{Q}$ is a minimal positive basis of the minimal subspace $\pos{P\!\parenth{Q}},$ there is a unique positive convex combination 
$\sum_{j \in J} \beta_j P(q_j)$ that equals zero. Since $P$ is the projection onto the orthogonal complement of $\pos R,$ one has $\sum_{j \in J} \beta_j q_j \in \pos R.$  

Assume $\sum_{j \in J} \beta_j q_j = 0.$ Then the origin is in the relative interior of  $ \conv\ \! Q,$ and therefore $\pos\ \! Q$ is a linear subspace of dimension $\card{Q} - 1.$ In this case the choice of $R$ implies the inequality 
$\card{R} \geq \card{Q}.$ Moreover, in this case, the subspace $\pos \braces{R \cup Q}$ is equal to the direct sum of the linear subspaces $\pos R$ and $\pos Q.$ Hence,
$\dim \pos \! \braces{R \cup Q} = \card{R} +  \card{Q} - 2$ and $R \cup Q$ is a positive basis of $\pos \! \braces{R \cup Q}.$

Assume now that $\sum_{j \in J} \beta_j q_j \neq  0.$ 
Since $R$ is a minimal basis of $\pos R,$  there is a unique  proper  subset $I^\prime$ of $I$ of minimal cardinality and positive coefficients $\alpha_i, i \in I^\prime,$ such that 
\[
\sum_{j \in J} \beta_j q_j + \sum_{i \in I^\prime} \alpha_i r_i = 0.
\]
This equation and the minimality of $\card{I^\prime}$ imply that the origin is in the relative interior of $\conv \parenth{R^\prime \cup Q},$ where $R^\prime = \{r_i  \st i \in I^\prime \}.$
Hence, $\pos \!\! \braces{R^\prime \cup Q} = \Lin{\ \!\!\!\braces{R^\prime \cup Q}}.$
However, the construction shows that 
$\dim \parenth{R^\prime \cup Q} = \card{I^\prime}+ \parenth{\card{Q}  - 1}.$ Hence $R^\prime \cup Q$ is the minimal positive basis for  
$ \pos \!\braces{R^\prime \cup Q}.$ By the choice of $R,$ 
$\card{R} \geq \card{ Q \cup R^\prime} > \card{Q}.$

Since $\pos R = \lpos R$ by the choice of $R,$ we have
\[
\pos \!\braces{R \cup Q} = \pos \! \braces{R \cup P(Q)}.
\]
Clearly, $\card{P(Q)} = \card{Q}.$ However, $\pos \! \!\braces{R\cup P(Q)}$ is the direct sum of $\pos R$ and $\pos P(Q),$ which is a linear subspace of dimension $\card{R} + \card{Q} -2$ by the choice of $R$ and $Q.$ 
 The proof of 
\Href{Lemma}{lem:Reay_property_of_iteration_step} is complete.
\end{proof}

\begin{lem}\label{lem:properties_of_the_rainbow_next_set}
Assume that for some $m^\prime > 1,$ the sets $R_1, \dots, R_{m^\prime-1}$ have already been  chosen by our inductive construction in such a way that
\begin{enumerate}
\item 
 $\dim \lpos{\!\!\braces{R_1 \cup \dots \cup R_{m^\prime-1}}} < k.$
\item $\card{R_i} \geq  2$ for each $i \in [m^\prime-1]$.
\item  For every $i \in [m^\prime -1],$ $\pos\!\! \braces{R_1 \cup \cdots \cup R_i}$ is a linear subspace of $\Red$ of dimension  
$ \sum\limits_{j \in [i]}\card{R_j}- i.$ 
\end{enumerate}
 Then the inductive construction described in \Href{Subsection}{subsec:next_rainbow_set} returns  $R_{m^\prime}$ satisfying the following properties:
 \begin{enumerate}
\item $\card{R_{m^\prime}} \geq  2.$
\item $\card{R_{m^\prime-1}} \geq \card{R_{m^\prime}}.$
\item  $\pos\! \!\braces{R_1 \cup \cdots \cup R_{m^\prime}}$ is a linear subspace of $\Red$ of dimension  
$ \sum\limits_{j \in [m^\prime]}\card{R_{j}}- m^\prime.$ 
\end{enumerate}
\end{lem}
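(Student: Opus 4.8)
The plan is to settle $\card{R_{m'}} \ge 2$ by a counting argument, and to obtain the other two properties by pushing everything forward into a projected copy of $\R^d$, applying \Href{Lemma}{lem:Reay_property_of_iteration_step} there, and pulling the conclusion back with \Href{Proposition}{prp:basis_and_orth_complement}. The size bound is immediate: by the inductive construction $P_{m'-1}\parenth{R_{m'}}$ is a minimal positive basis of the minimal --- hence non-trivial --- subspace $\pos P_{m'-1}\parenth{R_{m'}}$, so it has at least two elements, and since a projection does not increase cardinality, $\card{R_{m'}} \ge \card{P_{m'-1}\parenth{R_{m'}}} \ge 2$.

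For the remaining two properties I would first fix notation for the nested projections appearing across the construction. For $0 \le j \le m'$ put $L_j := \lpos\braces{R_1 \cup \dots \cup R_j}$ (so $L_0 = \braces 0$), let $C_j := L_j^\perp$, and let $P_j$ be the orthogonal projection onto $C_j$; then $L_0 \subseteq L_1 \subseteq \cdots$, hence $C_0 \supseteq C_1 \supseteq \cdots$ and $P_j = P_j \circ P_i$ for $i \le j$. Hypothesis (3) says $\pos\braces{R_1 \cup \dots \cup R_i}$ is a linear subspace of dimension $\sum_{j \in [i]}\card{R_j} - i$ for $i \le m'-1$, so it coincides with $L_i$; in particular $R_1 \cup \dots \cup R_{m'-2} \subset L_{m'-2}$. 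The structural heart of the argument is the observation that applying \Href{Proposition}{prp:basis_and_orth_complement} to $L_{m'-2} \subseteq L_{m'-1}$ gives $L_{m'-1} = L_{m'-2} \oplus \pos P_{m'-2}\parenth{R_{m'-1}}$, so that $V := \pos P_{m'-2}\parenth{R_{m'-1}}$ is a linear subspace of $C_{m'-2}$, $C_{m'-1}$ is exactly the orthogonal complement of $V$ inside $C_{m'-2}$, and therefore the restriction of $P_{m'-1}$ to $C_{m'-2}$ coincides with the orthogonal projection of $C_{m'-2}$ onto the orthogonal complement of $V$. I would also note the harmless identities $\card{R_{m'-1}} = \card{P_{m'-2}\parenth{R_{m'-1}}}$ and $\card{R_{m'}} = \card{P_{m'-2}\parenth{R_{m'}}}$ (the projections are injective on these sets, the elements of a minimal positive basis being distinct and $P_{m'-1} = P_{m'-1} \circ P_{m'-2}$).

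Next I would invoke \Href{Lemma}{lem:Reay_property_of_iteration_step} in the linear space $C_{m'-2}$, taking $S = \bigcup P_{m'-2}\parenth{S_i}$ over the still-available colors $i \in [k+d] \setminus \parenth{I_1 \cup \dots \cup I_{m'-2}}$, taking $R := P_{m'-2}\parenth{R_{m'-1}}$ in the role of the first minimal positive basis (of the minimal subspace $\pos R = V$, of maximal cardinality by the step-$(m'-1)$ choice of $I_{m'-1}$), and taking $Q := P_{m'-2}\parenth{R_{m'}}$ in the role of the second set, whose image under the projection onto the orthogonal complement of $\pos R$ inside $C_{m'-2}$ is $P_{m'-1}\parenth{R_{m'}}$, a minimal positive basis of a minimal subspace of maximal cardinality by the step-$m'$ choice of $I_{m'}$. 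The lemma then yields $\card{R_{m'}} = \card{Q} \le \card{R} = \card{R_{m'-1}}$ --- the second asserted property --- together with the fact that $\pos\braces{R \cup Q}$ is a linear subspace of $C_{m'-2}$ of dimension $\card{R} + \card{Q} - 2$. Since $R_1 \cup \dots \cup R_{m'-2} \subset L_{m'-2}$, one more use of \Href{Proposition}{prp:basis_and_orth_complement} (with $L = L_{m'-2} \subseteq \lpos\braces{R_1 \cup \dots \cup R_{m'}}$) gives $\pos\braces{R_1 \cup \dots \cup R_{m'}} = L_{m'-2} \oplus \pos\braces{R \cup Q}$, a linear subspace of dimension $\dim L_{m'-2} + \card{R_{m'-1}} + \card{R_{m'}} - 2 = \sum_{j \in [m']}\card{R_j} - m'$, the third asserted property.

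I expect the main obstacle to be twofold. The routine but error-prone part is the projection bookkeeping that identifies $P_{m'-1}$, restricted to the residual space $C_{m'-2}$ in which $R_{m'-1}$ lives after projection, with the single projection step that \Href{Lemma}{lem:Reay_property_of_iteration_step} is phrased for. The genuinely delicate point is that ``maximal cardinality'' in that lemma has to be read relative to rainbow sub-selections from the still-available colors rather than arbitrary subsets of $S$ --- after projection a single color class might already contain a large minimal positive basis. I would deal with this by observing that the proof of \Href{Lemma}{lem:Reay_property_of_iteration_step} uses the maximality of $R$ only to outscore the two explicit competitors it builds, namely $Q$ and $Q \cup R'$, and that both of these are rainbow sub-selections from the available colors forming minimal positive bases of minimal subspaces; hence the maximal choices of $I_{m'-1}$ and $I_{m'}$ in the construction are exactly what the argument needs.
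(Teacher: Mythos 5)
Your proof is correct and takes essentially the same route as the paper: project by $P_{m^\prime-2}$, apply \Href{Lemma}{lem:Reay_property_of_iteration_step} to $R=P_{m^\prime-2}\!\parenth{R_{m^\prime-1}}$ and $Q=P_{m^\prime-2}\!\parenth{R_{m^\prime}}$, and lift back with \Href{Proposition}{prp:basis_and_orth_complement} to obtain $\card{R_{m^\prime-1}}\geq\card{R_{m^\prime}}$ and the dimension count $\sum_{j\in[m^\prime]}\card{R_j}-m^\prime$. The only deviation is your larger choice of $S$ (all projected available color classes), which forces the patch about reading ``maximal cardinality'' against the two rainbow competitors $Q$ and $Q\cup R^\prime$; your resolution of this is sound, but the paper sidesteps it by simply taking $S=R\cup Q$, where every subset is automatically a (projected) rainbow sub-selection and the maximality hypotheses follow directly from the maximal choices of $I_{m^\prime-1}$ and $I_{m^\prime}$.
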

\begin{proof}

By \Href{Corollary}{cor:existence_of_the_next_rainbow_set}, the inductive construction described in \Href{Subsection}{subsec:next_rainbow_set} returns  $R_{m^\prime}.$ By construction, $\card{R_{m^\prime}} \geq 2.$ 

We use $P_0$ to denote the identity operator on $\R^d,$ $L_0$ and $C_0$ to denote 
$0$ and $\R^d,$ respectively.  
Set
$R = P_{m^\prime -2} (R_{m^\prime -1})$  and $Q = P_{m^\prime -2} (R_{m^\prime}).$ 
Since $C_{m^\prime - 2} \supset C_{m^\prime - 1} \supset C_{m^\prime },$ 
\[P_{m^\prime -1}(Q) = P_{m^\prime -1}P_{m^\prime -2}(R_{m^\prime}) = P_{m^\prime -1} (R_{m^\prime}). 
\] 
Hence, 
$\card{R} = \card{R_{m^\prime -1}}$ and  $\card{Q} = \card{R_{m^\prime}}.$ 
Moreover, $R$ is a minimal basis of  $\pos R$ by the choice of $R_{m^\prime -1}.$
Thus, the sets $R$ and $Q$ satisfy the assumptions of \Href{Lemma}{lem:Reay_property_of_iteration_step} with $S = R \cup Q$. 
Consequently, $\card{R_{m^\prime -1}} \geq \card{R_{m^\prime}}$ and 
$\pos \!\!\braces{R \cup Q} $ is a linear subspace of  dimension $\card{R_{m^\prime -1}} + \card{R_{m^\prime}} -2.$  

The lemma follows in the case $m^\prime =2.$ If $m^\prime > 2,$ 
$L_{m^\prime -2}$ is non-trivial.
By \Href{Proposition}{prp:basis_and_orth_complement}, applied with 
$L = L_{m^\prime -2}$ and $P = P_{m^\prime -2}$, we obtain
\[
\pos L_{m^\prime}
= \pos\!\bigl(L_{m^\prime -2} \cup R_{m^\prime -1} \cup R_{m^\prime}\bigr)
= \pos L_{m^\prime -2} \;\oplus\;
\pos\!\bigl( P_{m^\prime -2}(R_{m^\prime -1}) 
              \cup 
              P_{m^\prime -2}(R_{m^\prime}) \bigr).
\]
Since $R = P_{m^\prime -2}(R_{m^\prime -1})$ and 
$Q = P_{m^\prime -2}(R_{m^\prime})$, the above becomes
\[
\pos L_{m^\prime}
= \pos L_{m^\prime -2} \;\oplus\; \pos\!\braces{R \cup Q}.
\]
Thus,
\[
\dim \pos L_{m^\prime}
= \dim \pos L_{m^\prime -2}
  + \bigl(\card{R_{m^\prime -1}} + \card{R_{m^\prime}} - 2\bigr).
\]
Using the induction hypothesis,
\[
\dim \pos L_{m^\prime -2}
= \sum_{j \in [m^\prime -2]} \card{R_j} - (m^\prime -2),
\]
we obtain
\[
\dim \pos L_{m^\prime}
= \sum_{j \in [m^\prime -2]} \card{R_j} - (m^\prime -2)
  + \card{R_{m^\prime -1}} + \card{R_{m^\prime}} - 2
= \sum_{j \in [m^\prime]} \card{R_j} - m^\prime.
\]

\end{proof}

Let us combine together all the small lemmas to prove \Href{Theorem}{thm:coloful_Reay_decomposition_weak}.
\begin{proof}[Proof of \Href{Theorem}{thm:coloful_Reay_decomposition_weak}]
Choose $R_1$ as described in \Href{Subsection}{sub:first_rainbow_set}.
The existence of such a choice was justified in \Href{Subsection}{sub:first_rainbow_set}.

 If $\card{R_1} > k,$ then the inductive construction
terminates immediately at $m = 1$ and by \eqref{eq:first_step} $R_1$ is the desired rainbow sub-selection.  

If $\card{R_1} \leq k,$ we use the inductive construction from \Href{Subsection}{subsec:next_rainbow_set} to construct sets $R_2, \dots, R_m$ while
$\dim \lpos \! \braces{R_1 \cup \dots \cup R_{m-1}}  < k.$ 
By \Href{Corollary}{cor:existence_of_the_next_rainbow_set} and \eqref{eq:first_step}, we can choose the next rainbow set as described in the inductive construction. By \Href{Lemma}{lem:properties_of_the_rainbow_next_set},
the constructed set system satisfies the desired properties. Since we add at least one dimension at each step, the process terminates after at most $k$ steps.  
\end{proof}

\section{Optimality of Helly numbers}
\label{sec:optimality_helly_numbers}
We start by proving \Href{Lemma}{lem:coloful_Reay_decomposition_bound}, which shows the optimality of $d+k$ in \Href{Theorem}{thm:coloful_Reay_decomposition}.

\begin{proof}[Proof of \Href{Lemma}{lem:coloful_Reay_decomposition_bound}]
We proceed by induction on $d$.

For $d=k$, set 
\[
S_1 = \dots = S_{2k-1} = \braces{\pm e_1, \dots, \pm e_k}.
\]
Any rainbow sub-selection $R$ from these sets omits at least one of the $2k$ points $\braces{\pm e_1, \dots, \pm e_k}$. Hence, $\pos R$ cannot contain the whole space $\R^k$.

Now we explain how to add one dimension and one new set to our set system.  
We assume that $\R^{k}$ is the subspace of $\R^d$ spanned by its first $k$ standard basis vectors $e_1, \dots, e_k$ for any $k \in [d]$.

The inductive construction is as follows: assume that for some $d>k$ we have already constructed sets
\[
S_1, \dots, S_{d+k-2} \subset \R^{d-1}.
\]
We then define $S_{d+k-1}$ to be any positive basis of $\R^{d}$ whose vectors do not lie in $\R^{d-1}$.

We now show that this construction indeed produces the desired sets. More precisely, we will show that the lineality space of any rainbow sub-selection $R$ coincides with the lineality space of the elements of $R$ taken from the first $2k-1$ sets, that is,
\[
\lpos R
  = 
  \lpos\!\left( R \cap \left( S_1 \cup \dots \cup S_{2k-1} \right) \right).
\]
Since $\pos S_1 = \dots = \pos S_{2k-1} = \R^{k}$ and $\pos S_{2k-1+i} = \R^{k+i}$, and since we have already settled the case $d=k$, this identity of lineality spaces implies the desired bound
\[
\dim \lpos R < k.
\]

Assume now that $\lpos R$ is non-trivial; otherwise, there is nothing to prove.  
By \Href{Proposition}{prp:basis_of_lineality_subspace}, the set $R$ contains a positive basis $F$ of its lineality space.  
Let $j$ be the largest index such that $S_j \cap F$ is non-empty.  
If $j \le 2k-1$, there is again nothing to prove.  
If $j > 2k-1$, then the single element of $S_j \cap F$ has a non-zero $(j-(k-1))$-th coordinate, and it is the only element of $F$ whose $(j-(k-1))$-th coordinate is non-zero.  
This contradicts the fact that $F$ is a positive basis of a non-trivial subspace.

The proof of \Href{Lemma}{lem:coloful_Reay_decomposition_bound} is complete.
\end{proof}

\begin{rem}
The sets $S_i, i > 2k-1,$ can be chosen in such a way that $\dim\lpos S_i = k$.
\end{rem}

\begin{cor}\label{cor:optimality_lineality_subspace}
The number of colors $d + k + 1$ in \Href{Theorem}{thm:colorful_Caratheodory_for_lineality_subspace} is optimal. For any $k \in [d-1],$ there are subsets $A_1, \dots, A_{d+k}$ of $\R^d$  with $\dim \lpos A_i > k,$ $i \in [d+k],$ and such that   $\dim \lpos R \leq k$ for every rainbow sub-selection $R$ from these sets.
\end{cor}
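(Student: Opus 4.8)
The plan is to deduce \Href{Corollary}{cor:optimality_lineality_subspace} directly from \Href{Lemma}{lem:coloful_Reay_decomposition_bound} by a straightforward "duality" between the statement of \Href{Theorem}{thm:colorful_Caratheodory_for_lineality_subspace} and the failure configuration produced by the lemma. Recall that \Href{Theorem}{thm:colorful_Caratheodory_for_lineality_subspace} asserts that with $d+k+1$ color classes, a local condition on rainbow sub-selections of size at most $h(k,d)$ forces a global conclusion. To show the number of colors cannot be lowered to $d+k$, I need $d+k$ sets $A_1, \dots, A_{d+k}$ for which the hypothesis holds (so that every rainbow sub-selection $R$, and in particular every one of size at most $h(k,d)$, has $\dim \lpos R \leq k$) but the conclusion fails, i.e. $\dim \lpos A_i > k$ for every $i$.

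First I would invoke \Href{Lemma}{lem:coloful_Reay_decomposition_bound} with the same $d$ and with $k$ replaced by $k+1$: since $k \in [d-1]$ we have $d \geq k+1 \geq 1$, so the lemma applies and yields $d+(k+1)-1 = d+k$ sets $S_1, \dots, S_{d+k} \subset \R^d$ such that $\pos S_i$ contains a $(k+1)$-dimensional subspace for every $i$ — equivalently $\dim \lpos S_i \geq k+1 > k$ — while the positive cone of any rainbow sub-selection from $S_1, \dots, S_{d+k}$ contains no $(k+1)$-dimensional subspace, i.e. $\dim \lpos R \leq k$ for every rainbow sub-selection $R$. Setting $A_i = S_i$, the first property is exactly the negation of the conclusion of \Href{Theorem}{thm:colorful_Caratheodory_for_lineality_subspace}, and the second property certainly implies that $\dim \lpos R \leq k$ for every rainbow sub-selection of size at most $h(k,d)$, which is the hypothesis of the theorem. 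Hence the $d+k$ color classes $A_1, \dots, A_{d+k}$ witness that the theorem fails with only $d+k$ colors, so $d+k+1$ is optimal.

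There is essentially no obstacle here; the only point requiring a word of care is the index shift $k \mapsto k+1$, and checking that the constant $h(k,d)$ appearing in \Href{Theorem}{thm:colorful_Caratheodory_for_lineality_subspace} plays no role in the counterexample — indeed the configuration from \Href{Lemma}{lem:coloful_Reay_decomposition_bound} kills \emph{all} rainbow sub-selections, not merely the small ones, so any Helly-number would fail to help. (The remark following the proof of \Href{Lemma}{lem:coloful_Reay_decomposition_bound} even lets one take $\dim \lpos S_i = k+1$ exactly, should one want the cleanest statement.) I would therefore write the proof of \Href{Corollary}{cor:optimality_lineality_subspace} as a two-line application of \Href{Lemma}{lem:coloful_Reay_decomposition_bound} with parameter $k+1$, noting that the garbled sentence "and such that $\R^d,$" in the corollary statement should read "such that no rainbow sub-selection has a $k$-dimensional lineality space" or the like — the substantive content is the one I have described.
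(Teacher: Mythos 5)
Your proposal is correct and is essentially identical to the paper's own proof: the paper likewise invokes \Href{Lemma}{lem:coloful_Reay_decomposition_bound} with $k$ replaced by $k+1$ (valid since $k \in [d-1]$ gives $1 < k+1 \leq d$) and takes the resulting $d+k$ sets as the counterexample, the equivalence between ``$\pos S_i$ contains a $(k+1)$-dimensional subspace'' and ``$\dim \lpos S_i > k$'' being immediate from the maximality of the lineality space. Your observation that the size restriction $h(k,d)$ plays no role (and your reading of the garbled clause in the statement) matches the remark the paper itself places after the corollary.
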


\begin{rem}
We do not assume that the size of $R$ is at most $h(k,d)$. 
\end{rem}

\begin{proof}
Since $k \in [d-1],$ $1 < k+1 \leq d$. Thus, any $d+k$ sets returned by \Href{Lemma}{lem:coloful_Reay_decomposition_bound} for $k \to k+1$ provide the desired example.
\end{proof}

Similarly, one gets

\begin{cor}
The number of colors $d + (d-k) + 1$ in \Href{Theorem}{thm:colorful_linear_system_to_big} is optimal. For any $k \in [d-1],$ there are $\mathcal{A}_1, \dots, \mathcal{A}_{d+ (d-k)}$ finite subsets of nonzero vectors in $\R^d$ such that the dimension of the solution set of the systems $\iprod{a}{x}\leq 0,\; a\in \mathcal{A}_i,$ $i \in [d+ (d-k)]$ is strictly less than $k$, but for every rainbow sub-selection $R$ from these sets the system $\iprod{a}{x} \leq 0,\; a\in R$ has at least $k$ linearly independent solutions.
\end{cor}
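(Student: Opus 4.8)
The final corollary asserts optimality of the color count $d+(d-k)+1$ in Theorem~\ref{thm:colorful_linear_system_to_big}: for every $k \in [d-1]$ we must exhibit $d+(d-k)$ finite sets $\mathcal{A}_1, \dots, \mathcal{A}_{d+(d-k)}$ of nonzero vectors in $\R^d$ such that each individual system $\iprod{a}{x} \leq 0,\; a \in \mathcal{A}_i$ fails to have $k$ linearly independent solutions, yet every rainbow sub-selection from the family yields a system with at least $k$ linearly independent solutions. The plan is to dualize: by Lemma~\ref{lem:sol_set_via_polar_of_cone}, the system $\iprod{a}{x} \leq 0,\; a \in \mathcal{A}$ has at least $k$ linearly independent solutions if and only if $\dim \lpos \mathcal{A} \leq d-k$. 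So the desired example is precisely a family of $d+(d-k)$ sets with $\dim \lpos \mathcal{A}_i > d-k$ for each $i$, but $\dim \lpos R \leq d-k$ for every rainbow sub-selection $R$. This is exactly what Corollary~\ref{cor:optimality_lineality_subspace} (equivalently, Lemma~\ref{lem:coloful_Reay_decomposition_bound} applied with $k \to d-k$) provides.

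\textbf{Key steps.} First I would substitute $k \to d-k$ in Lemma~\ref{lem:coloful_Reay_decomposition_bound}: since $k \in [d-1]$ we have $d-k \in [d-1]$, so $1 \leq d-k \leq d$, and the lemma is applicable. It yields $d + (d-k) - 1$ sets in $\R^d$ with $\pos S_i$ containing a $(d-k)$-dimensional subspace and no rainbow sub-selection whose positive cone contains a $(d-k)$-dimensional subspace. To match the count $d+(d-k)$, I would instead invoke Corollary~\ref{cor:optimality_lineality_subspace} in the form it is stated — it already produces $d+k$ sets $A_i$ with $\dim \lpos A_i > k$ and $\dim \lpos R \leq k$ for all rainbow $R$ — and apply it with the role of $k$ played by $d-k$, giving $d+(d-k)$ sets. (Alternatively one re-runs the inductive construction from the proof of Lemma~\ref{lem:coloful_Reay_decomposition_bound} for the parameter $d-k$ with the strict inequality $\dim \lpos R < d-k$ relaxed to $\leq$; the remark after that lemma, that $S_i$ can be taken with $\dim \lpos S_i = d-k$, shows the gap in the count is harmless.) Second, I would observe that the vectors produced can be taken nonzero — indeed they are $\pm$ standard basis vectors and vectors of positive bases, none of which is $0$ — so each $S_i$ is a legitimate finite set of nonzero vectors and one sets $\mathcal{A}_i = S_i$. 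Third, I would translate back through Lemma~\ref{lem:sol_set_via_polar_of_cone}: $\dim \lpos \mathcal{A}_i > d-k$ means the solution set of $\iprod{a}{x} \leq 0,\; a \in \mathcal{A}_i$ has dimension $d - \dim \lpos \mathcal{A}_i < k$, i.e.\ fewer than $k$ linearly independent solutions; and $\dim \lpos R \leq d-k$ for every rainbow sub-selection $R$ means the corresponding solution set has dimension $\geq k$, hence at least $k$ linearly independent solutions. This is exactly the required conclusion, and it holds for \emph{every} rainbow sub-selection, not merely those of size at most $m(k,d)$, which only strengthens the optimality statement.

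\textbf{Main obstacle.} There is essentially no obstacle: the content is entirely in Lemma~\ref{lem:coloful_Reay_decomposition_bound}, which is already proven in the excerpt. The only point requiring a moment's care is the index bookkeeping — making sure that applying the lineality-space optimality result with parameter $d-k$ produces precisely $d+(d-k)$ sets (rather than $d+(d-k)-1$), and that $d-k$ lies in the range $[d-1]$ where the cited results apply, which follows from $k \in [d-1]$. Once that substitution is made explicit, the duality of Lemma~\ref{lem:sol_set_via_polar_of_cone} does the rest mechanically, mirroring word-for-word the passage from Theorem~\ref{thm:colorful_Caratheodory_for_lineality_subspace} to Theorem~\ref{thm:colorful_linear_system_to_big} given in Section~\ref{sec:Colorful_Helly_lineality_and_dual}.
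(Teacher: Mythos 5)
Your proposal is correct and follows essentially the same route as the paper: apply Corollary~\ref{cor:optimality_lineality_subspace} (itself Lemma~\ref{lem:coloful_Reay_decomposition_bound} with shifted parameter) with $k$ replaced by $d-k$, which is legitimate since $d-k\in[d-1]$, and then translate lineality-space dimensions into numbers of linearly independent solutions via Lemma~\ref{lem:sol_set_via_polar_of_cone}. Your extra bookkeeping about the count $d+(d-k)$ versus $d+(d-k)-1$ and the nonzero/finite nature of the vectors is accurate and only makes explicit what the paper leaves implicit.
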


\begin{proof}
Since $k \in [d-1],$ $1 < d-k+1 \leq d$. By \Href{Lemma}{lem:sol_set_via_polar_of_cone}, any $d+(d-k)$ sets returned by \Href{Corollary}{cor:optimality_lineality_subspace} for $k \to d-k$ provide the desired example.
\end{proof}

As mentioned in the Introduction, the optimality of $h(k,d)$ and $m(k,d)$ was shown in \cite{barany2024positive}. For completeness, we provide the example that shows the optimality of $h(k,d)$ (the optimality of $m(k,d)$ follows immediately from it and \Href{Lemma}{lem:sol_set_via_polar_of_cone}).

Consider the case $d+1 \geq 2(k+1)$ in which $h(k,d) = d+1$. Set $A$ to be the vertices $v_1, \dots, v_{d+1}$ of a regular simplex with $v_1 + \dots + v_{d+1} = 0$. Then, $\lpos{A} = \R^d$ and for any $B \subset A$ of size at most $d,$ $\lpos{B} = \{0\}$.

Consider the case $d+1 < 2(k+1)$ in which $h(k,d) = 2(k+1)$. Set $A$ to be the set of vertices of the $(k+1)$-dimensional cross-polytope $\braces{\pm e_1, \dots, \pm e_{k+1}}$. Clearly, one has $\dim \lpos{A} = k+1$ but $\dim \lpos\parenth{A \setminus \braces{a}} \leq k$ for every $a \in A$.

\section{Nonhomogeneous Linear Systems}

\Href{Proposition}{prp:helly_hom_systems} was a byproduct of the proof method of the following result \cite[Theorem 1.2]{barany2024positive}:
\begin{prp}\label{prp:helly_cones}
Fix $k \in [d-1]$. Assume $\mathcal{F}$ is a finite family of convex sets in $\R^d$. If the intersection of any subfamily of at most $m(k,d)$ sets contains a $k$-dimensional cone, then the intersection $\bigcap\limits_{F \in \mathcal{F}} F$ of all sets in the family contains a $k$-dimensional cone.
\end{prp}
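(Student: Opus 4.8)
The plan is to deduce \Href{Proposition}{prp:helly_cones} from its homogeneous counterpart \Href{Proposition}{prp:helly_hom_systems} by homogenising, i.e. by passing to recession cones; I will carry this out for closed convex sets and comment on the general case at the end. For a closed convex set $F$, a standard argument shows that $F$ contains a $k$-dimensional cone if and only if its recession cone $\mathrm{rec}(F)$ has dimension at least $k$: if $F \supseteq a + \pos A$ with $\dim \pos A = k$ then $\pos A \subseteq \mathrm{rec}(F)$ (pushing to the limit along the cone $a + \pos A$ and using that $F$ is closed), while conversely, if $u_1, \dots, u_k \in \mathrm{rec}(F)$ are linearly independent and $x_0 \in F$, then $x_0 + \pos\{u_1, \dots, u_k\} \subseteq F$ is a $k$-dimensional cone. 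Since $m(k,d) \geq d+1$, every $(d+1)$-membered subfamily has non-empty intersection (by hypothesis it even contains a cone), so Helly's theorem \cite{helly1923mengen} gives $\bigcap_{F \in \mathcal{F}} F \neq \emptyset$; and since $\mathrm{rec}$ commutes with intersections of closed convex sets sharing a point, the hypothesis becomes: \emph{$\dim \bigcap_{i \in \sigma} C_i \geq k$ for every index set $\sigma$ with $\card{\sigma} \leq m(k,d)$}, where $C_i := \mathrm{rec}(F_i)$, and the goal becomes $\dim \bigcap_i C_i \geq k$.

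To pass to finitely generated cones, for each $\sigma$ with $\card{\sigma} \leq m(k,d)$ I would fix $k$ linearly independent vectors in $\bigcap_{i \in \sigma} C_i$ and, for each $i$, let $C_i'$ be the positive hull of all these chosen vectors that lie in $C_i$ (equivalently, of those chosen for index sets containing $i$). Then $C_i' \subseteq C_i$ is polyhedral, one still has $\dim \bigcap_{i \in \sigma} C_i' \geq k$ for every $\sigma$ with $\card{\sigma} \leq m(k,d)$, and $\bigcap_i C_i' \subseteq \bigcap_i C_i$, so it suffices to prove $\dim \bigcap_i C_i' \geq k$. By the Weyl--Minkowski theorem, each $C_i'$ is the solution set of a finite homogeneous system $\iprod{a}{x} \leq 0$, $a \in A_i$, with $A_i$ a finite set of nonzero vectors; set $A = \bigcup_i A_i$, so that $\bigcap_i C_i'$ is exactly the solution set of $\iprod{a}{x} \leq 0$, $a \in A$. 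If $B \subseteq A$ and $\card{B} \leq m(k,d)$, then the elements of $B$ lie in at most $\card{B} \leq m(k,d)$ of the sets $A_i$, say those with $i \in \sigma$, so the solution set of $B$ contains $\bigcap_{i \in \sigma} C_i'$ and hence has $k$ linearly independent points. By \Href{Proposition}{prp:helly_hom_systems}, the solution set of $\iprod{a}{x} \leq 0$, $a \in A$ --- namely $\bigcap_i C_i'$ --- has $k$ linearly independent solutions, i.e. dimension $\geq k$, as required.

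I expect the genuinely delicate point to be the passage to arbitrary (non-closed) convex sets: replacing each $F$ by $\overline{F}$ preserves the small-subfamily hypothesis, and the argument above then produces a $k$-dimensional cone in $\bigcap_i \overline{F_i}$, but it remains to push such a cone into $\bigcap_i F_i$ itself. I expect this to be routine but fiddly, relying on standard facts about relative interiors and recession cones (such as $\mathrm{relint}(F) = \mathrm{relint}(\overline{F})$ and that adding a recession direction keeps a point in the relative interior) together with the existence, by Helly, of a point of $\bigcap_i F_i$. Everything else is bookkeeping; I would stress that the absence of any ``colorful'' inflation of the Helly number is explained entirely by the last counting step --- a subset of $A$ of size at most $m(k,d)$ involves at most $m(k,d)$ of the original sets --- so the constant $m(k,d)$ is inherited unchanged from the monochromatic \Href{Proposition}{prp:helly_hom_systems}.
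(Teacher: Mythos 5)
Your closed-set argument is correct, and it follows the same strategy as the paper's own treatment: the paper does not reprove this proposition from scratch (it is B\'ar\'any's theorem) but sketches its equivalence with \Href{Proposition}{prp:helly_hom_systems}, exactly as you do, using $m(k,d)\ge d+1$ and Helly to produce a common point and then homogenising; the only difference is that you homogenise through recession cones and Weyl--Minkowski, while the paper inscribes solution sets of nonhomogeneous linear systems and shifts apexes via \Href{Lemma}{lem:apex_shift}. Your final counting step (a subsystem of size at most $m(k,d)$ meets at most $m(k,d)$ of the sets $A_i$) is fine.

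The genuine gap is the non-closed case, and the plan you sketch for it cannot be completed as stated. After replacing each $F_i$ by $\overline{F_i}$, the only facts you retain are (a) $\bigcap_i\overline{F_i}$ contains a $k$-dimensional cone and (b) $\bigcap_i F_i\neq\emptyset$ by Helly, and you propose to finish from (a), (b) and standard relative-interior facts. These do not imply the conclusion. In $\R^2$ take $F_1=\braces{(x,y)\st y>0}\cup\braces{(x,0)\st x\ge 0}$ and $F_2=\braces{(x,y)\st y<0}\cup\braces{(x,0)\st x\le 0}$: both are convex, $\overline{F_1}\cap\overline{F_2}$ is the $x$-axis (a $1$-dimensional cone), $F_1\cap F_2=\braces{0}$ is non-empty, and yet $F_1\cap F_2$ contains no $1$-dimensional cone. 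This is not a counterexample to the proposition (the pair already violates its hypothesis), but it shows that everything you would have in hand after passing to closures is consistent with the failure of the conclusion; in particular, the common point produced by Helly need not lie in $\mathrm{relint}(F_i)$ for every $i$, the intersection $\bigcap_i\mathrm{relint}(F_i)$ may be empty, and the relint-pushing trick that works for a single convex set does not transfer to an intersection. So the step you call ``routine but fiddly'' is in fact impossible from the data you keep: taking closures discards exactly the relevant information, namely that the witness cones of the small subfamilies lie inside the original sets and not merely inside their closures.

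A correct reduction must carry those witness cones inside the original sets through the whole argument. This is what the paper's ``somewhat standard reduction procedure'' refers to, and what its colorful analogue does explicitly in the proof of \Href{Theorem}{thm:colorful_helly_cones}: replace each $F_i$ by the convex hull of the (simplicial, $k$-dimensional) witness cones of the subfamilies of size at most $m(k,d)$ whose intersection it contains; this set sits inside $F_i$, the small-subfamily hypothesis persists, and one is reduced to sets generated by finitely many translated polyhedral cones, for which the homogenisation (finding a common apex, in the spirit of \Href{Lemma}{lem:apex_shift}) can be carried out and your closed-case computation applies. The point to internalise is that this inscription step, where the hypothesis on the original non-closed sets re-enters, is the actual content of the reduction rather than bookkeeping.
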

Let us show that \Href{Proposition}{prp:helly_cones} and \Href{Proposition}{prp:helly_hom_systems} are equivalent. Using a somewhat standard reduction procedure, one can equivalently assume that $\mathcal{F}$ is a finite family of the solution sets of finite nonhomogeneous systems of linear inequalities. The following trivial statement allows us to move a convex cone inside a convex set:
\begin{lem}\label{lem:apex_shift}
Let $K \subset \R^d$ be a closed convex set containing a cone $C$ with apex at $u$. Then $K$ contains the cone $v - u + C$ for every $v \in K$; $v$ is an apex of this cone.
\end{lem}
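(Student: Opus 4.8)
The plan is to unwind the paper's definition of a convex cone and then close the argument with a one-line limiting step. Since $C$ is a convex cone with apex at $u$, write $C = u + \pos A$ for a suitable $A \subset \R^d$. Then $v - u + C = v + \pos A$, and by the very definition of a cone and its apex this set is a convex cone with apex $v$; this already disposes of the last sentence of the statement. So everything reduces to proving the inclusion $v + \pos A \subset K$.

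For that, I would fix an arbitrary $p \in \pos A$ and use that $\pos A$ is stable under multiplication by nonnegative scalars (immediate from $\pos A = \bigcup_{\lambda \geq 0}\lambda\conv A$), so $np \in \pos A$ for every positive integer $n$, whence $u + np \in C \subset K$. Since $K$ is convex and $v \in K$, the point
\[
\frac1n(u + np) + \Bigl(1 - \frac1n\Bigr) v = v + p + \frac1n(u - v)
\]
lies in $K$ for every $n \geq 1$. Letting $n \to \infty$ and invoking the closedness of $K$ gives $v + p \in K$. As $p$ was an arbitrary element of $\pos A$, this yields $v - u + C = v + \pos A \subset K$, as desired.

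I do not expect any genuine obstacle: the only place where anything is actually used is the passage to the limit, which is precisely where the hypothesis that $K$ is closed enters; the rest is just bookkeeping with the definition of $\pos$. This matches the paper's description of the statement as trivial.
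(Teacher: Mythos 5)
Your argument is correct: the computation $\frac1n(u+np)+\bigl(1-\frac1n\bigr)v = v+p+\frac1n(u-v)$ is right, closedness of $K$ is invoked exactly where it is needed (without it the statement fails), and the apex claim follows directly from the paper's definition of a cone as $a+\pos A$. The paper states this lemma without proof, calling it trivial, and your proof is precisely the standard limiting argument it has in mind, so there is nothing further to compare.
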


Using the fact that $m(k,d) \geq d+1$ in the classical Helly's theorem and \Href{Lemma}{lem:apex_shift}, we see that one can equivalently assume that $\mathcal{F}$ is a finite family of the solution sets of finite homogeneous systems of linear inequalities. Thus, \Href{Proposition}{prp:helly_cones} and \Href{Proposition}{prp:helly_hom_systems} are indeed equivalent.

Unfortunately, the previous argument does not apply directly in the colorful case. We can show the following weaker colorful version of \Href{Proposition}{prp:helly_cones}, which is essentially a nonhomogeneous version of \Href{Theorem}{thm:colorful_linear_system_to_big}.

\begin{thm}\label{thm:colorful_helly_cones}
Fix $k \in [d-1]$. Assume $\mathcal{F}_1, \dots, \mathcal{F}_{d+(d-k)+1}$ are finite families of convex sets in $\R^d$ such that the intersection of sets in any rainbow sub-selection $R$ of size at most $m(k,d)$ from these sets contains a $k$-dimensional cone. Then for some $i \in [d+(d-k)+1]$, all sets in $\mathcal{F}_i$ contain a translate of some $k$-dimensional cone $C$.
\end{thm}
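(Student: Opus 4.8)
The plan is to reduce \Href{Theorem}{thm:colorful_helly_cones} to \Href{Theorem}{thm:colorful_linear_system_to_big} by a localization-and-homogenization argument, handling the obstruction that — unlike in the monochromatic case — one cannot freely translate the cones found in the rainbow sub-selections to a common apex. First I would reduce to the polyhedral setting: by a standard approximation, it suffices to assume that each set in each family $\mathcal{F}_i$ is the solution set of a finite system of nonhomogeneous linear inequalities $\iprod{a}{x} \le b$. A convex (polyhedral) set contains a $k$-dimensional cone if and only if its recession cone has dimension at least $k$, and the recession cone of $\{x \st \iprod{a}{x} \le b,\ a \in A\}$ is exactly the homogeneous solution set $\{x \st \iprod{a}{x} \le 0,\ a \in A\}$; moreover, by \Href{Lemma}{lem:apex_shift}, a polyhedron whose recession cone has dimension $\ge k$ contains a translate of that cone with apex at any of its points. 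So ``$F$ contains a $k$-dimensional cone'' is equivalent to ``the recession cone of $F$ has dimension $\ge k$,'' which depends only on the homogeneous parts of the defining inequalities.

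Next I would pass to recession cones throughout. For each $i \in [d+(d-k)+1]$ and each $F \in \mathcal{F}_i$, let $\mathcal{A}_F$ denote the (finite) set of homogeneous parts $a$ of the inequalities defining $F$, and let $\mathcal{A}_i = \bigcup_{F \in \mathcal{F}_i} \mathcal{A}_F$ — or, more carefully, one keeps the family structure so that the rainbow sub-selections match up. The subtlety is that intersecting a rainbow sub-selection of the \emph{sets} $F$ corresponds to \emph{uniting} the homogeneous systems $\mathcal{A}_F$, so a rainbow sub-selection of size at most $m(k,d)$ of the \emph{sets} does not correspond to a rainbow sub-selection of size at most $m(k,d)$ of the \emph{vectors}. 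To fix this I would instead argue directly: for a rainbow sub-selection of \emph{vectors} $R = \{a_1, \dots, a_r\}$ with $a_j$ a homogeneous part of some inequality defining a set $F_{i_j} \in \mathcal{F}_{i_j}$ and $r \le m(k,d)$, one can bundle each $a_j$ inside the single inequality of $F_{i_j}$ it came from, take the rainbow sub-selection $\{F_{i_1}, \dots, F_{i_r}\}$ of the \emph{set} families (of size $\le m(k,d)$), and apply the hypothesis to conclude $\bigcap_j F_{i_j}$ contains a $k$-dimensional cone; its recession cone, which is contained in $\{x \st \iprod{a_j}{x} \le 0,\ j\}$ (the recession cone of an intersection is contained in the intersection of recession cones — in fact equals it for polyhedra), thus has dimension $\ge k$, so the homogeneous system $\iprod{a_j}{x} \le 0$ has $k$ linearly independent solutions.

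Having arranged that the ``colored vector'' families $\mathcal{A}_1, \dots, \mathcal{A}_{d+(d-k)+1}$ (where $\mathcal{A}_i$ collects all homogeneous parts of all inequalities defining all members of $\mathcal{F}_i$, tagged by color $i$) satisfy the hypothesis of \Href{Theorem}{thm:colorful_linear_system_to_big}, I would invoke that theorem to obtain an index $i$ for which $\iprod{a}{x} \le 0,\ a \in \mathcal{A}_i$ has $k$ linearly independent solutions, i.e.\ a $k$-dimensional cone $C$ with apex at the origin lying in $\bigcap_{a \in \mathcal{A}_i}\{x \st \iprod{a}{x}\le 0\}$. This $C$ is then contained in the recession cone of every $F \in \mathcal{F}_i$ (since the defining inequalities of $F$ have homogeneous parts among $\mathcal{A}_i$), so by \Href{Lemma}{lem:apex_shift}, each such $F$ — being nonempty, which I should record from the size-$1$ rainbow sub-selections — contains a translate of $C$. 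That yields the conclusion. The main obstacle, and the point deserving the most care, is exactly the bookkeeping in the previous paragraph: making sure that a size-$\le m(k,d)$ rainbow sub-selection at the level of vectors is correctly matched to a size-$\le m(k,d)$ rainbow sub-selection at the level of sets, so that the Helly-number $m(k,d)$ is not inflated in the reduction; using that at most one vector is chosen per color and re-packaging it into the single parent inequality of a single parent set is what makes this work.
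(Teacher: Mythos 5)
Your polyhedral core is essentially correct and runs parallel to the paper's argument: you collect the homogeneous parts of the defining inequalities into color classes $\mathcal{A}_i$, and for a rainbow sub-selection of at most $m(k,d)$ vectors you repackage each chosen vector into its single parent inequality, hence into a single parent set of its parent family, so that the hypothesis applied to that rainbow sub-selection of \emph{sets} yields a $k$-dimensional cone whose direction vectors satisfy the selected homogeneous inequalities. This is exactly how the paper verifies the hypothesis of \Href{Theorem}{thm:colorful_linear_system_to_big}, and your recession-cone bookkeeping (nonemptiness of each set coming from size-one rainbow sub-selections, then \Href{Lemma}{lem:apex_shift} to place a translate of the common cone $C=\pos\braces{v_1,\dots,v_k}$ inside every member of the chosen family) correctly delivers the stated conclusion; in the genuinely polyhedral situation this even bypasses the paper's bipolar step. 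Minor loose ends (discarding inequalities with zero homogeneous part, and families all of whose members are $\R^d$, for which the conclusion is trivial) are easy to handle.

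The genuine gap is your opening sentence: ``by a standard approximation, it suffices to assume that each set is the solution set of a finite system of nonhomogeneous linear inequalities.'' The members of the $\mathcal{F}_i$ are arbitrary convex sets, and no generic approximation preserves both the hypothesis and the conclusion: an outer polyhedral approximation keeps the rainbow hypothesis but a cone found in the approximants says nothing about the original sets, while an inner approximation transfers the conclusion but in general destroys the hypothesis unless it is built to contain the specific cones that the hypothesis guarantees. This reduction is precisely where the paper's proof does its main work: it fixes, for every rainbow sub-selection $R$ with $\card{R}\le m(k,d)$, a simplicial $k$-cone $C(R)$ whose translate lies in the intersection of $R$, replaces each $F\in\mathcal{F}_i$ by the polyhedral cone $F^\prime=\conv$ of the cones $C(R)$ over the $R$ containing $F$, observes $F^\prime\subset F^{\circ\circ}$, applies \Href{Theorem}{thm:colorful_linear_system_to_big} to the resulting homogeneous systems, and then needs \Href{Lemma}{lem:cone_containment_set_bipolar} to pull the cone back into $F$ itself. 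Your plan can be repaired in the same spirit (for instance, after passing to closures, replace each $F$ by the closed convex hull of the finitely many translated cones $u_R+C(R)$ with $R\ni F$ and $\card{R}\le m(k,d)$, a polyhedron contained in $F$ that still satisfies the rainbow hypothesis), but this construction, driven by the hypothesis cones rather than by any off-the-shelf approximation, is the missing idea; as the paper points out, the ``somewhat standard'' monochromatic reduction (a common point via Helly, then homogenization) does not carry over to the colorful setting, so the reduction must be argued, not cited.
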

Our proof strategy is to reduce the problem to the homogeneous case obtained in \Href{Theorem}{thm:colorful_linear_system_to_big}.

We need the following two properties of convex cones.

\begin{lem}\label{lem:cone_shift_intersection}
Assume $C$ is a closed convex cone in $\R^d$ with apex at the origin, and $c_1, \dots, c_n$ are points of $C$. Then
\[
c_1 + \dots + c_n + C \subset \bigcap\limits_{i \in [n]} \parenth{c_i + C}.
\]
\end{lem}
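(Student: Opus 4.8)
The plan is to prove Lemma~\ref{lem:cone_shift_intersection} by showing the two defining properties of points in the claimed superset, exploiting only the convex-cone structure of $C$. Fix $i \in [n]$ and take an arbitrary point $y$ in the left-hand side, so $y = c_1 + \dots + c_n + c$ for some $c \in C$. I want to exhibit $y$ as a point of $c_i + C$, i.e.\ to write $y - c_i$ as an element of $C$. Regrouping, $y - c_i = \sum_{j \in [n],\, j \neq i} c_j + c$, which is a sum of elements of $C$ (the $c_j$ for $j \neq i$, together with $c$). Since $C$ is a convex cone with apex at the origin, it is closed under addition: if $u, v \in C$ then $u + v = 2 \cdot \tfrac{1}{2}(u+v) \in C$ because $\tfrac12(u+v) \in \conv\!\parenth{\{u,v\}} \subset C$ and $C = \pos C$ is closed under non-negative scaling. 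Iterating, any finite sum of elements of $C$ lies in $C$, so $y - c_i \in C$, hence $y \in c_i + C$. As $i$ was arbitrary, $y$ lies in the intersection, which is exactly the desired inclusion.

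There is essentially no obstacle here; the one point requiring a word of care is the justification that a cone with apex at the origin is closed under addition, which is where the hypothesis ``apex at the origin'' is used (a translated cone $a + \pos A$ with $a \neq 0$ need not have this property). I would state that additivity fact explicitly, either inline as above or by appealing to the standard characterization that a set is a convex cone with apex at the origin if and only if it is closed under addition and non-negative scalar multiplication. Everything else is the one-line regrouping computation, so the proof is short. The only thing to double-check is that the empty-sum edge case ($n = 0$, or the case where all $c_j = 0$) is harmless, which it is, since $0 \in C$ and the inclusion degenerates to $C \subset C$.

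\begin{proof}
Recall that a closed convex cone $C$ with apex at the origin is closed under addition: for $u,v \in C,$ the midpoint $\tfrac12(u+v)$ lies in $\conv\!\parenth{\{u,v\}} \subset C,$ and since $C = \pos C$ is invariant under multiplication by the non-negative scalar $2,$ we get $u + v \in C.$ By induction, every finite sum of elements of $C$ again belongs to $C.$

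Fix $i \in [n]$ and let $y \in c_1 + \dots + c_n + C,$ say $y = c_1 + \dots + c_n + c$ with $c \in C.$ Then
\[
y - c_i = \sum_{\substack{j \in [n] \\ j \neq i}} c_j + c,
\]
which is a finite sum of elements of $C$ (each $c_j$ with $j \neq i$ lies in $C$ by hypothesis, and $c \in C$), hence $y - c_i \in C.$ Therefore $y \in c_i + C.$ Since $i \in [n]$ was arbitrary, $y \in \bigcap_{i \in [n]} \parenth{c_i + C},$ which proves the claimed inclusion.
\end{proof}
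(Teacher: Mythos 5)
Your proof is correct and rests on exactly the same observation as the paper's: that $c_1+\dots+c_n-c_i\in C$ because a convex cone with apex at the origin is closed under addition. The paper merely packages the final step by citing Lemma~\ref{lem:apex_shift}, whereas you verify the inclusion $c_1+\dots+c_n+C\subset c_i+C$ directly; the content is the same.
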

\begin{proof}
The lemma follows from \Href{Lemma}{lem:apex_shift} and the observation that $c_1 + \dots + c_n - c_i \in C$ for every $i \in [n]$.
\end{proof}

\begin{lem}\label{lem:cone_containment_set_bipolar}
Assume $K$ is a closed convex set in $\R^d$ and a non-trivial closed convex cone $C$ with apex at the origin is contained within $K^{\circ \circ}$. Then $K$ contains a translate of $C$.
\end{lem}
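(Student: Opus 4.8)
The plan is to prove \Href{Lemma}{lem:cone_containment_set_bipolar} by a direct separation/polarity argument, and the key is to understand the relationship between $K^{\circ\circ}$ and $K$ via \Href{Proposition}{prp:bipolar theorem}. By the bipolar theorem, $K^{\circ\circ}$ is the closure of $\conv(K \cup \{0\})$; since $K$ is already closed and convex, $K^{\circ\circ} = \overline{\conv(K\cup\{0\})}$, which differs from $K$ only by "filling in" towards the origin. Concretely, I would first observe that if $0 \in K$, then $K^{\circ\circ} = K$ and the statement is immediate (take the identity translate). So the interesting case is $0 \notin K$, where $K^{\circ\circ}$ is the closure of the union of all segments $[0,x]$ with $x \in K$.

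Next I would use the recession cone. Recall that for a nonempty closed convex set $K$, its recession cone $\mathrm{rec}(K) = \{v : K + v \subseteq K\}$ is a closed convex cone with apex at the origin, and $K$ contains a translate of a cone $C$ (with apex at the origin) if and only if $C \subseteq \mathrm{rec}(K)$ — this is essentially \Href{Lemma}{lem:apex_shift} plus the definition. Therefore the lemma reduces to the claim $\mathrm{rec}(K) = \mathrm{rec}(K^{\circ\circ})$, because $C \subseteq K^{\circ\circ}$ with apex at the origin forces $C \subseteq \mathrm{rec}(K^{\circ\circ})$ (a closed convex cone with apex $0$ contained in a closed convex set is contained in its recession cone). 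So the heart of the matter is: \emph{adding the origin to $K$ and taking the closed convex hull does not change the recession cone.}

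To see $\mathrm{rec}(K) = \mathrm{rec}(K^{\circ\circ})$: the inclusion $\mathrm{rec}(K) \subseteq \mathrm{rec}(K^{\circ\circ})$ is clear since $K \subseteq K^{\circ\circ}$ and recession cones are monotone under inclusion for closed convex sets sharing a common nonempty set — more carefully, a direction $v$ is recessive for a closed convex set $S$ iff $x_0 + tv \in S$ for all $t\geq 0$ for some (equivalently every) $x_0 \in S$; picking $x_0 \in K$ works for both. For the reverse inclusion, take $v \in \mathrm{rec}(K^{\circ\circ})$ and a point $x_0 \in K \subseteq K^{\circ\circ}$; then $x_0 + tv \in K^{\circ\circ}$ for all $t \geq 0$. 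Each such point is a limit of convex combinations $\lambda_n y_n$ with $y_n \in K$, $\lambda_n \in [0,1]$; I would argue (using that $K$ is closed and $0 \notin K$, so $K$ is bounded away from $0$) that for large $t$ these combinations must have $\lambda_n \to 1$, forcing $x_0 + tv$ to actually lie in $\overline{K} = K$ for all large $t$, hence $v \in \mathrm{rec}(K)$. The main obstacle is making this last limiting argument fully rigorous — one must handle the case where $K$ is unbounded and control how the coefficients $\lambda_n$ behave; the cleanest route is probably to note that $K^{\circ\circ} \setminus \{0\} \subseteq \bigcup_{\lambda \in (0,1]} \lambda K$ fails to be closed in general, so instead I would directly show that if $v \in \mathrm{rec}(K^{\circ\circ})$ then the ray $\{x_0 + tv : t \geq 0\}$ cannot escape to have points only in the "new" part $\overline{\conv(K\cup\{0\})} \setminus K$, since that new part, intersected with any ball, stays within bounded distance of the segment structure emanating from $0$, whereas the ray goes to infinity in direction $v$ and $v$ must then already be a recession direction of $K$ itself. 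Once $\mathrm{rec}(K) = \mathrm{rec}(K^{\circ\circ})$ is established, the lemma follows in one line.
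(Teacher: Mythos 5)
Your reduction to recession cones is sound as far as it goes: since $K^{\circ\circ}$ is closed and convex and contains the cone $C$ with apex $0$, indeed $C \subseteq \mathrm{rec}(K^{\circ\circ})$, and by \Href{Lemma}{lem:apex_shift} the lemma is equivalent to $C \subseteq \mathrm{rec}(K)$ (note $K \neq \emptyset$, since otherwise $K^{\circ\circ}=\{0\}$ could not contain a non-trivial cone). But the step that carries all the content --- that a recession direction of $K^{\circ\circ}=\overline{\conv\parenth{K\cup\{0\}}}$ is a recession direction of $K$ --- is exactly where your argument is not a proof. The claim you lean on, that approximations $\lambda_n y_n \to x_0+tv$ with $y_n\in K$, $\lambda_n\in[0,1]$ must have $\lambda_n\to 1$ for large $t$, is false in general: when $K$ is unbounded, a point of $\conv\parenth{K\cup\{0\}}$ far from the origin can be written as $\lambda y$ with $\lambda$ small and $y\in K$ huge. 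For instance, with $K=\braces{(x,y)\in\R^2 \st y\geq e^{-x}}$ (closed, convex, $0\notin K$), the point $(t,1)$ equals $\tfrac{1}{100}(100t,100)$ with $(100t,100)\in K$, so nothing forces the coefficient towards $1$. Your fallback --- that the ``new part'' of $K^{\circ\circ}$ stays within bounded distance of the segments emanating from $0$, so a ray escaping to infinity in direction $v$ forces $v$ to be a recession direction of $K$ --- is not an argument: the new part $\bigcup_{\lambda\in(0,1)}\lambda K$ is unbounded whenever $K$ is, and ``$v$ must then already be a recession direction of $K$'' is precisely the statement to be proved. You flag the gap yourself, but the heuristic offered does not close it.

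The gap is closable by a short quantitative argument, which is what the paper does, and it avoids controlling the coefficients at all: fix $p\in K$ and $v\in C$; for large $t$ choose $v_t\in K$ and $\lambda_t\in(0,1]$ with $\enorm{\lambda_t v_t - tv}\leq 1$ (possible because $tv\in\overline{\conv\parenth{K\cup\{0\}}}=\bigcup_{\lambda\in[0,1]}\lambda K$ up to closure, and $\lambda_t=0$ is excluded once $t\enorm{v}>1$); then the convex combination $p+\tfrac{\lambda_t}{t}(v_t-p)\in K$ equals $p\parenth{1-\tfrac{\lambda_t}{t}}+v+\tfrac{\delta_t}{t}$ with $\enorm{\delta_t}\leq 1$, hence converges to $p+v$ as $t\to\infty$, and closedness of $K$ gives $p+v\in K$, so $p+C\subseteq K$. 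Equivalently, in your recession-cone language, you could invoke the standard fact that for a nonempty closed convex set the recession cone coincides with the asymptotic cone: with $z_t=\lambda_t v_t$ as above, $v_t/(t/\lambda_t)=z_t/t\to v$ and $t/\lambda_t\to\infty$, so $v\in\mathrm{rec}(K)$. Either way, some such limiting computation is required; as written, your proposal stops exactly where the real work begins.
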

\begin{proof}
Since $K^{\circ \circ}$ contains a point different from the origin, $K$ is non-empty; say $p \in K$. Take any vector $v$ within $C$ and $t > 2$. Since $t v$ belongs to the closure of $\conv \parenth{K \cup \{0\}}$, there is a vector $v_t$ in $K$ such that the segment with endpoints $0$ and $v_t$ intersects the unit ball centered at $t v$, that is, $\lambda_t v_t = t v + \delta_t$ with $\enorm{\delta_t} \leq 1$ and $\lambda_t \in [0,1]$. By convexity, the point $p + \lambda (v_t - p)$ is in $K$ for all $\lambda \in [0,1]$. Thus,
\[
K \supset p + \frac{\lambda_t}{t} (v_t - p) = p \parenth{1 - \frac{\lambda_t}{t}} + \frac{t v + \delta_t}{t} \to p + v \quad \text{as} \quad t \to \infty.
\]
Hence, $p + v \in K$, and consequently, $p + C \subset K.$
\end{proof}

Now, we are ready to prove \Href{Theorem}{thm:colorful_helly_cones}, a non-homogeneous version of \Href{Theorem}{thm:colorful_linear_system_to_big}.

\begin{proof}[Proof of \Href{Theorem}{thm:colorful_helly_cones}]
The idea is to reduce the original families to families of closed polyhedral convex cones with apexes at the origin, that is, to the situation resolved in \Href{Theorem}{thm:colorful_linear_system_to_big}.

Without loss of generality, we can assume that all the sets are closed. Indeed, a convex set contains a non-trivial cone if and only if the closure of the set contains the closure of the cone. Also, we disregard the sets that coincide with the whole space $\R^d$.

For every rainbow sub-selection $R$ from the families $\mathcal{F}_{1}, \dots, \mathcal{F}_{d+(d-k)+1}$, we use $C(R)$ to denote a $k$-dimensional cone of the form $\pos\!\!\braces{v_1(R), \dots, v_k(R)}$ with linearly independent vectors $v_1(R), \dots, v_k(R)$ such that its translate is contained in the intersection set of $R$.

For every $i \in [d+(d-k)+1]$ and every $F_i \in \mathcal{F}_i$, define $F_i^\prime$ as the convex hull of the cones $C(R)$ for all the rainbow sub-selections $R$ of size at most $m(k,d)$ from $\mathcal{F}_{1}, \dots, \mathcal{F}_{d+(d-k)+1}$ containing $F_i$. Since $K$ and the origin are contained within $K^{\circ \circ}$, \Href{Lemma}{lem:apex_shift} implies that every such $C(R)$ is contained in $F_i^{\circ \circ}$. Hence, $F_i^\prime \subset F_i^{\circ \circ}$. Also, $F_i^\prime$ is a polyhedral cone with an apex at the origin, that is, $F_i^\prime$ is the solution set of a finite homogeneous system of linear inequalities $\iprod{a_i}{x} \leq 0$, $a_i \in A_i \subset \R^d \setminus \{0\}$.

Define $\mathcal{F}_1^\prime, \dots, \mathcal{F}_{d+(d-k)+1}^\prime$ by
\[
\mathcal{F}_i^\prime = \braces{F_i^\prime \st F_i \in \mathcal{F}_i}, \qquad i \in [d+(d-k)+1].
\]
On the one hand, we have that the intersection set of any rainbow sub-selection $R^\prime$ of size at most $m(k,d)$ from the families $\mathcal{F}_{1}^\prime, \dots, \mathcal{F}_{d+(d-k)+1}^\prime$ contains a $k$-dimensional cone with an apex at the origin. On the other hand, for every $i \in [d+(d-k)+1]$, the intersection set of $\mathcal{F}_i^\prime$ can be described as the solution set of a finite homogeneous system of linear inequalities $\iprod{a_i}{x} \leq 0$, $a_i \in \mathcal{A}_i \subset \R^d \setminus \{0\}$. The sets $\mathcal{A}_1, \dots, \mathcal{A}_{m(k,d)}$ satisfy the assumption of \Href{Theorem}{thm:colorful_linear_system_to_big}. Thus, for some $i \in [d+(d-k)+1]$, the system $\iprod{a}{x}\leq 0,\; a\in \mathcal{A}_i$, has $k$ linearly independent solutions, that is, the intersection $\bigcap\limits_{F_i^\prime \in \mathcal{F}_i^\prime} F_i^\prime$ contains a $k$-dimensional cone with an apex at the origin. As shown, this means that the intersection $\bigcap\limits_{F_i \in \mathcal{F}_i} F_i^{\circ \circ}$ contains a $k$-dimensional cone with an apex at the origin.

Applying \Href{Lemma}{lem:cone_containment_set_bipolar}, we see that each of the sets of $\mathcal{F}_i$ contains a translate of some $k$-dimensional cone. The proof of \Href{Theorem}{thm:colorful_helly_cones} is complete.
\end{proof}

We believe that, under the assumptions of \Href{Theorem}{thm:colorful_helly_cones}, there is a family whose intersection set contains a $k$-dimensional cone.


\begin{thebibliography}{AHAK22}

\bibitem[AHAK22]{almendra2022quantitative}
V{\'\i}ctor~Hugo Almendra-Hern{\'a}ndez, Gergely Ambrus, and Matthew Kendall.
\newblock Quantitative {H}elly-type theorems via sparse approximation.
\newblock {\em Discrete \& Computational Geometry}, pages 1--8, 2022.

\bibitem[B{\'a}r82]{barany1982generalization}
Imre B{\'a}r{\'a}ny.
\newblock A generalization of {C}arath{\'e}odory's theorem.
\newblock {\em Discrete Mathematics}, 40(2-3):141--152, 1982.

\bibitem[B{\'a}r24]{barany2024positive}
Imre B{\'a}r{\'a}ny.
\newblock Positive bases, cones, {H}elly-type theorems.
\newblock {\em Mathematica Slovaca}, 74(3):717--722, 2024.

\bibitem[BK22]{barany2022helly}
Imre B{\'a}r{\'a}ny and Gil Kalai.
\newblock Helly-type problems.
\newblock {\em Bulletin of the American Mathematical Society}, 59(4):471--502,
  2022.

\bibitem[Dav54]{davis1954theory}
Chandler Davis.
\newblock Theory of positive linear dependence.
\newblock {\em American Journal of Mathematics}, 76(4):733--746, 1954.

\bibitem[DFN21]{damasdi2021colorful}
G{\'a}bor Dam{\'a}sdi, Vikt{\'o}ria F{\"o}ldv{\'a}ri, and M{\'a}rton
  Nasz{\'o}di.
\newblock Colorful {H}elly-type theorems for the volume of intersections of
  convex bodies.
\newblock {\em Journal of Combinatorial Theory, Series A}, 178:105361, 2021.

\bibitem[Hel23]{helly1923mengen}
E.~Helly.
\newblock {\"U}ber mengen konvexer {K}{\"o}rper mit gemeinschaftlichen
  {P}unkte.
\newblock {\em Jahresbericht der Deutschen Mathematiker-Vereinigung},
  32:175--176, 1923.

\bibitem[IN22]{ivanov2022quantitative}
Grigory Ivanov and M{\'a}rton Nasz{\'o}di.
\newblock A quantitative {H}elly-type theorem: containment in a homothet.
\newblock {\em SIAM Journal on Discrete Mathematics}, 36(2):951--957, 2022.

\bibitem[IN24]{ivanov2024steinitz}
Grigory Ivanov and Márton Naszódi.
\newblock Quantitative {S}teinitz theorem: {A} polynomial bound.
\newblock {\em Bulletin of the London Mathematical Society}, 56(2):796--802,
  2024.

\bibitem[IN25]{Ivanov2025sphericalsteinitz}
Grigory Ivanov and Márton Naszódi.
\newblock Quantitative steinitz theorem: a spherical version.
\newblock {\em Boletín de la Sociedad Matemática Mexicana}, 31(2):40, 2025.

\bibitem[Iva25]{ivanov_QST_polarity_2025}
Grigory Ivanov.
\newblock Quantitative {S}teinitz {T}heorem and {P}olarity.
\newblock {\em Discrete \& Computational Geometry}, May 2025.

\bibitem[Kat78]{katchalski1978helly}
Meir Katchalski.
\newblock A {H}elly type theorem for convex sets.
\newblock {\em Canadian Mathematical Bulletin}, 21(1):121--123, 1978.

\bibitem[PB07]{PolBalEng}
E.~S. Polovinkin and M.~V. Balashov.
\newblock {\em Elements of {C}onvex and {S}trongly {C}onvex {A}nalysis}.
\newblock Fizmatlit (in Russian), 2007.

\bibitem[Rea65]{reay1965generalizations}
John~R. Reay.
\newblock {\em Generalizations of a theorem of {C}arath{\'e}odory}.
\newblock Number~54. American Mathematical Soc., 1965.

\bibitem[Sch14]{schneider2014convex}
Rolf Schneider.
\newblock {\em Convex bodies: the {B}runn--{M}inkowski theory}.
\newblock Number 151. Cambridge university press, 2014.

\bibitem[Ste13]{steinitz1913bedingt}
Ernst Steinitz.
\newblock Bedingt konvergente {R}eihen und konvexe {S}ysteme.
\newblock {\em J. Reine Angew. Math.}, 143:128--176, 1913.

\end{thebibliography}
\end{document}